\documentclass[11pt,dvipsnames,reqno]{amsart}
\usepackage[T1]{fontenc}
\usepackage{a4wide}
\usepackage{yhmath,mathrsfs,amsthm,amsmath,amssymb,amsfonts,enumerate,lipsum, appendix,mathtools, bm}
\usepackage{bbold,comment}
\usepackage{pdfpages}
\usepackage{orcidlink}
\usepackage[mathscr]{euscript}
\usepackage{graphicx}
\usepackage{pgf,tikz}
\usepackage{tkz-euclide}
\usepackage{graphicx}
\usepackage{subcaption}
\usetikzlibrary{shapes.geometric}
\usetikzlibrary{arrows,hobby}
\usepackage[shortlabels]{enumitem}
\usepackage[english]{babel}

\allowdisplaybreaks
\newcommand{\tb}{\textcolor{blue}}

\usepackage[sort,numbers]{natbib}
\usepackage{hyperref}
\usepackage{esint}
\hypersetup{
	colorlinks=true,
	linkcolor=b,
	filecolor=b,      
	urlcolor=b,
	citecolor=ao,
}
\definecolor{b}{rgb}{0.0, 0.0, 1.0}
\definecolor{ao}{rgb}{1.0, 0.13, 0.32}
\usepackage[margin=1in, heightrounded]{geometry}
\setcounter{tocdepth}{2}

\usepackage{bigints}
\usepackage{esint}
\newtheorem{theorem}{Theorem}[section]
\newtheorem{lemma}[theorem]{Lemma}
\newtheorem{proposition}[theorem]{Proposition}

\newtheorem{definition}[theorem]{Definition}
\theoremstyle{definition}

\newtheorem{remark}[theorem]{Remark}
\numberwithin{equation}{section}
\usepackage[hyperpageref]{backref}

\newcommand*\rd{\mathbb{R}^d}

\newcommand{\al} {\alpha}

\newcommand{\pa} {\partial}
\newcommand{\be} {\beta}
\newcommand{\de} {\delta}

\newcommand{\Om} {\Omega}
\newcommand{\la} {\lambda}
\newcommand{\La} {\Lambda}

\newcommand{\Gr} {\nabla}
\newcommand{\no} {\nonumber}
\newcommand{\noi} {\noindent}
\newcommand{\eps} {\varepsilon}

\newcommand{\ra} {\rightarrow}

\newcommand{\bee} {\begin{equation}}
	\newcommand{\eee} {\end{equation}}
\newcommand{\bea} {\begin{eqnarray}}
	\newcommand{\eea} {\end{eqnarray}}
\newcommand{\Bea} {\begin{eqnarray*}}
	\newcommand{\Eea} {\end{eqnarray*}}

\def\d{\,{\rm d}}
\def\dx{\,{\rm d}x}
\def\dy{\,{\rm d}y}
\def\dt{\,{\rm d}t}

\def\ds{\,{\rm d}s}

\def\RN{{\mathbb R}^N}
\def\({{\Big(}}
\def\){{\Big)}}

\def\dt{\,{\rm d}t}
\def\dx{\,{\rm d}x}

\DeclarePairedDelimiter\abs{\lvert}{\rvert}%
\DeclarePairedDelimiter\norm{\lVert}{\rVert}%

\vspace{-1.5\baselineskip}


\let\tmp\phi \let\phi\varphi \let\varphi\tmp

\newcommand{\frap}{(-\Delta_p)^s}
\newcommand{\delp}{-\Delta_p}
\newcommand{\pst}{p^\ast}
\newcommand{\psst}{p_s^\ast}
\newcommand{\R}{\mathbb{R}}
\newcommand{\RR}{\mathbb{R}^N}

\newcommand{\ov}{\overline}

\newcommand{\NN}{\mathbb{N}}
\newcommand{\ZZ}{\mathbb{Z}}
\renewcommand{\AA}{\mathcal{A}}

\newcommand{\PP}{\mathcal{P}}
\renewcommand{\SS}{\mathcal{S}}

\newcommand\restr[2]{{
  \left.\kern-\nulldelimiterspace 
  #1 
  \littletaller 
  \right|_{#2} 
  }}
\usepackage{natbib}
\newcommand{\littletaller}{\mathchoice{\vphantom{\big|}}{}{}{}}

\title[Quasilinear mixed local-nonlocal concave-convex]{Quasilinear problems with mixed local-nonlocal operator and concave-critical nonlinearities: Multiplicity of positive solutions}
\author[M. Bhakta, N. Biswas, and P. Das]{Mousomi Bhakta\,$^\dagger$, Nirjan Biswas\,\orcidlink{0000-0002-3528-8388}, \and Paramananda Das\,\orcidlink{0009-0009-2821-6675}}
\address{Department of Mathematics, Indian Institute of Science Education and Research (IISER-Pune), Dr. Homi Bhabha Road, Pune-411008, India}
\email[M. Bhakta]{mousomi@iiserpune.ac.in} \email[N. Biswas]{nirjan.biswas@acads.iiserpune.ac.in, nirjaniitm@gmail.com} 
\email[P. Das]{paramananda.das@students.iiserpune.ac.in, pd348225@gmail.com}
\thanks{$^\dagger$Corresponding author}
\subjclass[2020]{35B09, 35B33, 35J20, 35J25, 35J62, 35M12}
\keywords{Operators of mixed orders, concave-convex, critical nonlinearity, Ambrosetti-Brezis-Cerami type result, multiplicity of positive solutions, quasilinear, $p$-Laplacian and $p$-fractional}

\begin{document}

\begin{abstract}
 We study the existence and multiplicity of positive solutions for the following concave-critical problem driven by an operator of mixed order obtained by the sum of the classical $p$-Laplacian and of the fractional $p$-Laplacian, 
\begin{equation}\tag{$\PP_{\lambda,\varepsilon}$}
    -\Delta_p u+\varepsilon(-\Delta_p)^s u=\lambda|u|^{q-2}u+|u|^{p^*-2}u \;\text{ in }\Omega,\quad
    u=0 \; \text{ in }\mathbb{R}^N \setminus \Omega,
\end{equation}
where $\Omega\subset\mathbb{R}^N$ is a bounded open set, $\varepsilon\in(0,1]$, $0<s<1<q<p<N$, and
$p^*=\frac{Np}{N-p}$, and $\lambda \in \mathbb{R}$ is a parameter. For $\lambda \leq 0$, we show that (\textcolor{blue}{$\PP_{\lambda,\varepsilon}$}) has no nontrivial solution on star-shaped domains. For $\lambda>0$, we prove Ambrosetti-Brezis-Cerami type results. In particular, we prove the existence of $\Lambda_\varepsilon$ such that (\textcolor{blue}{$\PP_{\lambda,\varepsilon}$}) has a positive minimal solution for $0<\lambda<\Lambda_\varepsilon$, a positive solution for $\lambda=\Lambda_\varepsilon$ and no positive solution for $\lambda>\Lambda_\varepsilon$. We also prove the existence of $0<\lambda^\#\leq\Lambda_\varepsilon$ such that (\textcolor{blue}{$\PP_{\lambda,\varepsilon}$}) has at least two positive solutions for $\lambda\in(0,\lambda^\#)$ provided $\varepsilon$ small enough. This extends the recent result of Biagi and Vecchi (Nonlinear Anal. 256 (2025), 113795), Amundsen, et al. (Commun. Pure Appl. Anal., 22(10):3139–3164, 2023) from $p=2$ to the general $1<p<N$. Additionally, it extends the classical result of Azorero and Peral (Indiana Univ. Math. J., 43(3):941–957, 1994) to the mixed local-nonlocal quasilinear problems. Moreover, our results complement the multiplicity results for nonnegative solutions in da Silva, et al. (J. Differential Equations, 408:494–536, 2024).
\end{abstract}
\maketitle
\section{Introduction}
 In this paper we consider the following  mixed local-nonlocal concave-critical problem:
\begin{equation}\tag{$\PP_{\lambda,\varepsilon}$}\label{main_PDE}
\begin{cases}
            \delp u+\eps\frap u=\lambda|u|^{q-2}u+|u|^{\pst-2}u&\text{in }\Om,\\
            u=0&\text{in }\RR \setminus \Om,
        \end{cases}
\end{equation}
where $\eps\in(0,1]$, $0<s<1<q<p<N$ and $\Omega\subset\RR$ is a bounded domain of class $C^{1,\alpha}$ for some $\alpha\in(0,1)$, $\pst=\frac{Np}{N-p}$ is the critical Sobolev exponent, $\lambda \in \R$ is a parameter, and $p$-Laplace, fractional $p$-Laplace operators are defined for smooth enough functions as
\begin{align*}
    &\delp u = -\text{div}(\abs{ \Gr u}^{p-2} \Gr u); \\
    &\frap u = \, \textrm{P.V.}\int _{\RR} \frac{|u(x)-u(y)|^{p-2}(u(x)-u(y))}{|x-y|^{N+ps}} \dy,
\end{align*}
where P.V. is the Cauchy principal value.

The study of concave-convex problems when $\eps=0$ is quite old and dates back
to \cite{ABC, GaPa1}. Recently, the nonlinear problems driven by operators of mixed type have gained a lot of interest and
intensive investigation, in connection with the study of optimal animal foraging strategies (see e.g. \cite{DiVal}). On the other hand, when $p=2$, the operator $-\Delta_p+\eps(-\Delta_p)^s$ can
be seen as the infinitesimal generator of a stochastic process having both a Brownian motion
and a L\'evy f{}light. Hence, there is a vast literature which establishes several regularity
properties adopting probabilistic techniques, see e.g. \cite{CKSV} and the references therein. Mathematically speaking, this operator offers quite relevant challenges caused by the combination of nonlocal difficulties with the lack of invariance
under scaling. In the case $p=2$, Biagi, et al. \cite{BDVV} studied the Brezis-Nirenberg problem, and very recently da Silva, et al. \cite{DFB} generalized the Brezis-Nirenberg problem to any $p \in (1, \infty)$. The regularity results for mixed local-nonlocal operators are studied in \cite{AC, BMS, DM, PG, GK,SVWZ} and the references there-in. da Silva and Salort \cite{DSS} studied the existence of a positive solution of $(\PP_{\la,1})$ and its asymptotic behaviour when $p\to\infty$.


In the seminal paper \cite{ABC}, Ambrosetti-Brezis-Cerami (ABC) established the existence of $\La>0$ such that the problem
$$-\Delta u=\lambda u^{q-1}+u^{r-1} \text{ in }\Om,\quad u>0\text{ in }\Om,\quad u=0\text{ on }\pa\Om,$$
with $1<q<2<r$, admits
\begin{itemize}
    \item a  minimal solution for any $\la<\La$,
    \item at least one solution when $\la=\La$,
    \item no solution when $\la>\La$, and
    \item a second solution for any $\la\in(0,\La)$, provided $r\leq 2^*$.
\end{itemize}
In the literature, numerous authors investigated (ABC) type results for various elliptic operators. Boccardo, et al.  in \cite{BEP} extended some of the above results for $p$-Laplacian. More precisely,  they obtained the existence of a positive minimal solution of
\begin{align}\label{BEP-p}
    \delp u=|u|^{r-2}u+\la g(u) \text{ in }\Om,\quad u>0\text{ in }\Om,\quad u=0\text{ on }\pa\Om,
\end{align}
where  $g(s)\leq c_1 s^{q-1}$ for all $s\geq0$ and $u\mapsto |u|^{r-2}u+\la g(u)$ is nondecreasing. They found a $\la_1>0$ such that \eqref{BEP-p} has no positive solution when $\la>\la_1$. In the purely nonlocal setup, \cite{BCSS, YZ} proved the (ABC) type result. Amundsen, et al. in \cite{AMT}, showed the existence of $\La$ such that the following problem
\begin{equation}
    -\Delta u+(-\Delta)^s u=\lambda|u|^{q-2}u+|u|^{p-2}u\text{ in }\Om,\quad u=0\text{ in }\RR \setminus \Om,
\end{equation}
with $1<q<2<p$, has a positive solution when $\la<\La$, and has no positive solution when $\la>\La$. Very recently in Dhanya et al. \cite{DGJ} considered the problem
\begin{equation}\label{dgj_eq}
    \delp u+(-\Delta_q)^s u=\la\left(a(x)|u|^{\de-2}u+b(x)|u|^{r-2}u\right) \text{ in }\, \Om,\quad u=0\, \text{ in }\,\Om^c,
\end{equation}
where $\Om$ is a bounded domain in $\RN$, $p<q$ or $q<p$, $1<\de<\min\{p,q\}$, and  $r\leq \max\{p^*,q_s^*\}$, where $q_s^*=\frac{Nq}{N-sq}$. When $p<q$ and $r<\max\{p^*,q_s^*\}$ or $q<p$ and $r=p^*$ (the second case with $b=1$ or $b=\la^{-1}$), they studied multiplicity of nonnegative nontrivial solutions using the fibering map analysis and constrained minimization on specific subsets of the Nehari manifold.

In this paper, we extend the results of Amundsen, et al \cite{AMT} and Biagi-Vecchi \cite{BV2} to the general $p \in (1, \infty)$. We also extend the results of Azorero and Peral \cite{GaPa1} to the case of mixed local-nonlocal quasilinear operators. Moreover, our results complement the multiplicity results for nonnegative solutions obtained by da Silva, et al in \cite{DFB}.

For $\la \le 0$, we prove the non-existence of a nontrivial solution for \eqref{main_PDE}. In the celebrated paper \cite{BN1}, using the Pohozaev identity, Brezis-Nirenberg  showed that
\begin{equation*}
    -\Delta u=\lambda u+u^{\pst-1}\text{ in }\Om,\quad u>0 \text{ in }\Om,\quad u=0\text{ in }\pa\Om,
\end{equation*}
has no solution when $\Om$ is a bounded and star-shaped domain. For $p=2$, a similar non-existence result can be obtained in the purely nonlocal setup and mixed local-nonlocal setup by using the Pohozaev identities proved in \cite{RS1} and in \cite{B}, respectively. However, for $p \neq 2$, similar Pohozaev identities are not known.  In this paper, for the non-existence result, we use a Pucci-Serrin type identity given in \cite{RS}.

Now, we are in a position to state our first result.
\begin{theorem}[(ABC) type result]\label{ABC}
    Let $\Omega$ be a bounded $C^{1,\al}$ domain (for some $\al\in(0,1)$) in $\RN$ (with $N>p$), $1<q<p$, and $\eps\in(0,1]$. Then there exists $0<\La_\eps<\infty$ such that the following holds:
    \begin{enumerate}
         \item[\rm{(i)}] For $\la\in(0,\La_\eps)$, \eqref{main_PDE} admits a positive minimal weak solution. Moreover, minimal solutions are strictly increasing w.r.t. $\la$.
        \item[\rm{(ii)}] For $\la=\La_\eps$, \eqref{main_PDE} admits at least one positive weak solution.
        \item[\rm{(iii)}] For $\la>\La_\eps$, \eqref{main_PDE} does not admit any positive weak solution.
        \item[\rm{(iv)}] For $p>2$, $q\in[2,p)$ and $\la \le 0$, \eqref{main_PDE} does not admit any nontrivial solution in any star shaped domain.
    \end{enumerate}
\end{theorem}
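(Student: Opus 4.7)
I intend to follow the classical Ambrosetti-Brezis-Cerami scheme, adapted to the mixed quasilinear operator $\delp+\eps\frap$. The critical object is
$$\La_\eps:=\sup\bigl\{\la>0:\eqref{main_PDE}\text{ admits a positive weak solution}\bigr\},$$
and the argument organizes around proving $0<\La_\eps<\infty$, solvability at $\la=\La_\eps$, and nonexistence for $\la\le 0$ on star-shaped domains.

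For part (i), I first show $\La_\eps>0$ via the sub-supersolution method. A subsolution of the form $t\varphi_1$, where $\varphi_1>0$ is a first Dirichlet eigenfunction of $\delp$ on $\Om$, works for $t$ small because $q-1<p-1$ lets the concave term dominate the eigenvalue contribution. A supersolution is the unique positive solution $w_M$ of $\delp w+\eps\frap w=M$ with $w\equiv 0$ on $\RN\setminus\Om$, produced by monotone methods; for $\la$ small, $M$ can be chosen so that $M\ge\la\|w_M\|_\infty^{q-1}+\|w_M\|_\infty^{\pst-1}$, making $w_M$ a supersolution. A monotone iteration based on the weak comparison principle for $\delp+\eps\frap$ then yields a positive solution. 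Once a solution exists at some $\la_0>0$, it serves as a supersolution at every $\la<\la_0$; iterating from below produces the minimal positive solution $u_\la$, and strict monotonicity $\la_1<\la_2\Rightarrow u_{\la_1}<u_{\la_2}$ follows from the iteration scheme together with a strong comparison principle.

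For (iii), any positive solution $u$ satisfies pointwise $\la u^{q-1}+u^{\pst-1}\ge c\la^{(\pst-p)/(\pst-q)}u^{p-1}$ (minimizing $\la t^{q-p}+t^{\pst-p}$ over $t>0$), so $\delp u+\eps\frap u\ge c\la^{(\pst-p)/(\pst-q)}u^{p-1}$. Testing against $\phi_1^p/u^{p-1}$, with $\phi_1>0$ a first eigenfunction of $\delp+\eps\frap$ on $\Om$ with eigenvalue $\mu_1(\eps,\Om)$, and invoking the local and fractional Picone inequalities, I obtain $c\la^{(\pst-p)/(\pst-q)}\le\mu_1(\eps,\Om)$, hence $\La_\eps<\infty$. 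For (ii), take $\la_n\uparrow\La_\eps$ with minimal solutions $u_{\la_n}$: the sequence is monotone, and testing by $u_{\la_n}$ together with the mixed Sobolev embedding and the uniform $L^\infty$ bounds coming from the regularity theory cited in the introduction yields a uniform energy bound, so the pointwise limit $u^*$ weakly solves \eqref{main_PDE} at $\la=\La_\eps$ and is nontrivial, since the sequence dominates a fixed positive subsolution obtained from step (i) at some $\la_0\in(0,\La_\eps)$.

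For part (iv), suppose $p>2$, $q\in[2,p)$, $\la\le 0$, $\Om$ is star-shaped about the origin, and $u$ is a nontrivial solution. Testing against $u$ gives
$$\int_\Om|\Gr u|^p\dx+\eps\iint_{\RN\times\RN}\frac{|u(x)-u(y)|^p}{|x-y|^{N+ps}}\dx\dy=\la\int_\Om|u|^q\dx+\int_\Om|u|^{\pst}\dx.$$
I then apply the Pucci-Serrin type identity from \cite{RS} to \eqref{main_PDE}: star-shapedness of $\Om$ makes the boundary contribution from the local part nonnegative, and the corresponding nonlocal contribution from $\frap$ carries the same sign. Combining with the above Nehari identity eliminates the critical term $\int|u|^{\pst}$ (the defining property of the exponent $\pst$), leaving a relation of the form $c(p,q,N)\,\la\int_\Om|u|^q\dx+(\text{nonnegative terms})=0$. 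Since $\la\le 0$, every term must vanish, forcing $u\equiv 0$. The main obstacle is precisely step (iv): in the absence of a clean Pohozaev identity for $\frap$ when $p\ne 2$, one has to extract sign-definiteness of the nonlocal remainder from the Pucci-Serrin framework, which is where the restrictions $p>2$ and $q\ge 2$ genuinely enter. A secondary technical point running through (i)--(iii) is to verify that the comparison principles, Picone inequalities, and regularity results available for $\delp$ extend to $\delp+\eps\frap$ uniformly in $\eps\in(0,1]$.
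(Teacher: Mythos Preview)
Your overall strategy follows the classical ABC scheme and largely parallels the paper's structure, but there is a genuine gap in part (ii). You write that ``testing by $u_{\la_n}$ together with the mixed Sobolev embedding and the uniform $L^\infty$ bounds coming from the regularity theory cited in the introduction yields a uniform energy bound,'' but this is circular: the available regularity theory (Moser iteration, as in Proposition~\ref{regular}) bounds $\|u\|_\infty$ in terms of $\int|u|^{\pst\beta_1}$ or the $X_0$-norm, not the other way around. Testing the equation against $u_{\la_n}$ yields only $\rho_\eps(u_{\la_n})^p=\la_n\|u_{\la_n}\|_q^q+\|u_{\la_n}\|_{\pst}^{\pst}$, which by itself does not preclude blow-up as $\la_n\uparrow\La_\eps$. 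The paper closes this gap by a truncation argument (Lemma~\ref{Laeps}): for each $\la<\La_\eps$ it produces a solution $\hat z_{\la,\eps}$ minimizing a truncated, hence coercive, functional between the sublinear subsolution $w_{\la,\eps}$ and a supersolution, and shows $I_{\la,\eps}(\hat z_{\la,\eps})<0$. The sequence $\{\hat z_{\la_n,\eps}\}$ is then a Palais--Smale sequence for $I_{\La_\eps,\eps}$ at a nonpositive level, which gives the needed uniform $X_0$-bound. An alternative classical device would be semistability of minimal solutions, but you do not invoke that either.

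Your sketch of part (iv) is also incomplete at $\la=0$. The Ros-Oton--Serra result you cite (Proposition~\ref{RS_prop}) requires the \emph{strict} inequality $\alpha tf(t)>NF(t)$, which becomes an equality when $\la=0$. The paper treats this case separately by computing the one-sided derivative $\frac{d}{d\tau}\big|_{\tau=1^+}I_\tau$ of the rescaled energy and showing it is $\le -(1-s)\eps[u]_{s,p}^p$; combined with the identity $\frac{d}{d\tau}\big|_{\tau=1^+}I_\tau=0$ this forces $[u]_{s,p}=0$ and hence $u\equiv 0$. Your formula ``$c(p,q,N)\,\la\int|u|^q+(\text{nonnegative terms})=0$'' collapses when $\la=0$ unless you identify the nonnegative remainder and explain why its vanishing kills $u$; that is exactly the missing step. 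A smaller issue in (i): taking $t\varphi_1$ with $\varphi_1$ the first eigenfunction of $-\Delta_p$ alone may fail as a subsolution near $\partial\Om$, since $(-\Delta_p)^s\varphi_1/\varphi_1^{q-1}$ is not obviously bounded there; the paper sidesteps this by using the unique positive solution $w_{\la,\eps}$ of the purely sublinear mixed problem as the subsolution.
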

In \cite[Theorem 1.1]{DFB}, da Silva, Fiscella and Viloria proved the existence of $\lambda_\ast>0$ such that for all $\lambda\in(0,\lambda_\ast)$, ($\tb{\mathcal{P}_{\la, 1}}$) has infinitely many nontrivial solutions $\{u_n\}$ with negative energy. In this paper, we note that further results can be obtained through the application of the Dual Fountain Theorem. Specifically in the appendix, we establish the existence of a sequence of nontrivial solutions of \eqref{main_PDE} with negative energy whose energy converges to zero (see Proposition \ref{inf_non_triv}).

Next, for $p \ge 2$, we investigate the multiplicity of positive solutions of \eqref{main_PDE} in the spirit of (ABC). In \cite{Tar}, Tarantello used Nehari manifold approach to show the multiplicity of positive solutions to the problem:
$$-\Delta u=|u|^{2^*-2}u+f\text{ in }\Om,\quad u=0\text{ on }\pa\Om.$$
The author partitioned the Nehari manifold $\La=\La^+\sqcup\La_0\sqcup\La^-$ into three disjoint sets, where the first solution lies in $\La^+$, and the second solution lies in $\La^-$.
Azorero and Peral in \cite{GaPa1} established the existence of a second positive solution for the $p$-Laplace operator when $p \in [2,3)$ and $q\in(1,p)$, as well as when $p\geq3$ and $q\in(\pst-\frac2{p-1},p)$. In the context of the fractional $p$-Laplace operator, Ye and Zhang \cite{YZ} demonstrated the existence of a second positive solution under the conditions $p\geq2$, $q \in (p-1, p)$, and $q>p^*_s-1$ where $p^*_s=\frac{Np}{N-ps}$ is the fractional critical exponent. Unlike the linear case $p=2$, for $p \neq 2$, these conditions on $q$ (depending on the range of $p$) arise due to the application of certain inequalities. In order to prove the existence of second positive solution, the authors in \cite{GaPa1, Tar, YZ} prove an energy estimate similar to \eqref{-1e1}.

Biagi and Vecchi in \cite{BV2} considered the following problem
\begin{equation}\label{BV2-p}
            -\Delta u+\eps(-\Delta)^s u=\lambda u^q+u^{2^*-1}\text{ in }\Om,\quad
            u>0\text{ in }\Om,\quad
            u=0\text{ in }\RR \setminus \Om,
\end{equation}
with $q \in (0,1)$. They showed that there exist $\la_\ast>0$, $\eps_0\in(0,1)$ such that \eqref{BV2-p} admits a second positive solution for any $\la\in(0,\la_\ast)$ and $\eps\in(0,\eps_0)$. For the existence of a second positive solution, they prove that there exist $\eps_0 \in (0,1)$ and $R_0\gg1$ such that the following energy estimate holds
\begin{equation}\label{-1e1}
    I_{\la, \eps}(u_{\la, \eps}+tRU_\eps)<I_{\la,\eps}(u_{\la, \eps})+\frac1NS_0^{\frac{N}{2}}, \text{ for every } R \ge R_0,\,  t \in [0,1], \, \eps \in (0, \eps_0),
\end{equation}
where $S_0$ is the classical Sobolev constant defined in \eqref{Sob_const}, $I_{\la,\eps}$ is the energy functional associated with \eqref{BV2-p}, $u_{\la, \eps}$ is a weak solution of \eqref{BV2-p}, and $U_\eps$ is the product of Aubin-Talenti bubble for the Laplace operator with a cutoff function supported inside $\Omega$. Then, the existence of a second positive solution is obtained by showing that its energy level differs from that of $u_{\la, \eps}$. For that, the authors used the Mountain pass theorem under a certain threshold $c \in \R$ where $I_{\la, \eps}$ satisfies the Palais-Smale (we call it by $\text{(PS)}_c$) condition. Unlike the case of purely local or non-nonlocal setup, the following difficulties appear for the mixed local-nonlocal operator in order to prove \eqref{-1e1}:

Observe that since $\eps_0<1$, their multiplicity result does not include the mixed local-nonlocal operator $-\Delta+(-\Delta)^s$. The primary difficulty arises because the mixed local-nonlocal operator is not scaling invariant. For $\eps =1$, \eqref{-1e1} may not hold  for every dimension $N$. For $p=2$ and $\eps=1$, one has the following estimate for $R>0$:
$$I_{\la}(u_{\la}+RU_\eps)\leq I_{\la}(u_{\la})+\frac1NS_0^{\frac{N}{2}}+C_1\eps^{2-2s}-C_2\eps^{\frac{N-2}{2}}+o(\eps^{\frac{N-2}{2}}),$$
which yields $$I_{\la}(u_{\la}+RU_\eps)<I_{\la}(u_{\la})+\frac1NS_0^{\frac{N}{2}},$$ provided both $N,s$ have some restricted ranges, i.e., $N<6$ and $s<\frac{6-N}{4}$.
In order to obtain \eqref{-1e1}, the existence of $\eps_0\in(0,1)$ plays a key role.

In this paper, for the existence of two positive solutions, we follow these steps:
\leftmargini=14mm
\begin{enumerate}
        \item[\rm{(Step 1)}] We prove that the first solution exists and it is a local minimizer of the energy functional associated with \eqref{main_PDE}.
        \item[\rm{(Step 2)}] We establish an energy estimate (see \eqref{energy estimate} and \eqref{energy estimate-2}), which is a generalization of \eqref{-1e1} to $p \geq 2$.
        \item[\rm{(Step 3)}] Next, we prove the local Palais-Smale condition of the energy functional associated with \eqref{main_PDE}.
        \item[\rm{(Step 4)}] Finally, we use the Mountain pass theorem to get a solution whose energy is different from the energy of the first solution.
    \end{enumerate}

We employ the following techniques to tackle the difficulties arising from scaling variant nonlinear operators:

\smallskip
\noi (a)  To compensate for the lack of scaling invariance of the mixed norm, we need to include $\eps$ with the fractional $p$-Laplace operator given in \eqref{main_PDE}. However, this inclusion introduces new challenges due to the $\eps$-dependency of any weak solution of \eqref{main_PDE}. For instance, every norm corresponding to a weak solution of \eqref{main_PDE} and the energy functional of \eqref{main_PDE} implicitly depends on $\eps$. To address this, we adopt the approach of Biagi and Vecchi \cite{BV1} by showing that every solution of \eqref{main_PDE} is uniformly bounded in $X_0$ (the solution space for \eqref{main_PDE}) and moreover in $L^{\infty}(\Omega)$, provided it lies in a ball (with radius independent of $\eps$) of $X_0$. Further, we find the first positive solution in such a ball. More specifically, we have shown the existence of $\eps\in(0,1]$ and $\la^{\#}>0$ (independent of $\eps$) such that for any $0<\la<\la^{\#}$, \eqref{main_PDE} has a positive solution $u_{\la,\eps}$ in the interior of $B_{r_0}$ (see Proposition \ref{B_r lemma}).

\smallskip

\noi (b) As discussed above, due to the nonlinear structure of the operator, for every $p \neq 2$, we did not get the whole range of $q\in(1,p)$ to obtain \eqref{energy estimate} and \eqref{energy estimate-2}. This limitation arises because certain inequalities valid in the linear case $p=2$ fail in the $p \neq 2$ case. In this paper, depending on the values of $p$, we provide a certain range of $q$ so that \eqref{energy estimate} and \eqref{energy estimate-2} hold for every $p \in [2, \infty)$ (see Proposition \ref{energy_prop}).

We say $(p,q)$ satisfy the condition \eqref{A_pq} if the following is satisfied:
\begin{equation}\label{A_pq}\tag{$\mathbf{A_{pq}}$}
    \begin{cases}
        2\leq p<3\text{ and }q\in(1,p);\\
        p\geq3\text{ and } q \in (\pst-\frac2{p-1},p).
    \end{cases}
\end{equation}

The following theorem states the multiplicity result.
\begin{theorem}\label{second_sol}
    Let  $p\in(1, N)$,  $q\in(1,p)$ and $\eps\in(0,1]$. Then there exists $\la^{\#}>0$, independent of $\eps$, such that for every $0<\la<\la^{\#}$, \eqref{main_PDE} has a positive weak solution $u_{\la,\eps}$.

   Moreover, if $p \in [2, \infty)$ and $(p,q)$ satisfies the condition \eqref{A_pq}, then for every $0<\la<\la^{\#}$, there exists $\eps_\la>0$ such that for any $\eps\in (0,\eps_\la)$,  \eqref{main_PDE} admits another positive weak solution $v_{\la, \eps}\neq u_{\la,\eps}$ .
\end{theorem}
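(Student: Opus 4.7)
The plan is to follow the four-step roadmap indicated in the introduction. Let $I_{\la,\eps}$ denote the energy functional associated with \eqref{main_PDE} on the solution space $X_0$, namely
\[
I_{\la,\eps}(u) = \frac{1}{p}\int_\Om |\nabla u|^p\dx + \frac{\eps}{p}[u]_{s,p}^p - \frac{\la}{q}\int_\Om |u|^q\dx - \frac{1}{\pst}\int_\Om |u|^{\pst}\dx .
\]

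\emph{First solution.} Following the strategy underlying Proposition \ref{B_r lemma}, I seek $u_{\la,\eps}$ as a constrained minimizer of $I_{\la,\eps}$ on a closed ball $\overline{B_{r_0}}\subset X_0$ whose radius $r_0$ is independent of $\eps$. Compactness of the embedding $X_0\hookrightarrow L^q(\Om)$ and weak lower semicontinuity of the principal part produce a minimizer; testing with a small positive bump function shows $\inf_{\overline{B_{r_0}}} I_{\la,\eps}<0$ as soon as $\la<\la^\#$ with $\la^\#$ independent of $\eps$, so the minimizer lies strictly inside $B_{r_0}$ and is an unconstrained critical point. Replacing by $|u_{\la,\eps}|$ and applying the strong maximum principle yields $u_{\la,\eps}>0$; by construction it is a local minimizer of $I_{\la,\eps}$.

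\emph{Second solution via Mountain Pass.} To produce $v_{\la,\eps}\neq u_{\la,\eps}$ under the additional hypothesis $p\in[2,\infty)$ and \eqref{A_pq}, I will apply the Mountain Pass theorem around $u_{\la,\eps}$ along the path $t\mapsto u_{\la,\eps}+tRU_\eps$, where $U_\eps$ is a cutoff of the $p$-Aubin-Talenti extremal concentrated at an interior point of $\Om$, normalized so that its Sobolev quotient tends to $S_0$. Two ingredients are needed. First, the energy estimate (Proposition \ref{energy_prop}):
\[
\sup_{t\in[0,1]} I_{\la,\eps}(u_{\la,\eps}+tRU_\eps) < I_{\la,\eps}(u_{\la,\eps})+\frac{1}{N}\,S_0^{N/p},
\]
valid for $R$ large enough and $\eps<\eps_\la$. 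Second, the local Palais-Smale condition for $I_{\la,\eps}$ at every level $c<I_{\la,\eps}(u_{\la,\eps})+\tfrac{1}{N}S_0^{N/p}$; this follows from a standard concentration-compactness argument of Lions, since a failure of compactness forces a bubble carrying energy at least $\tfrac{1}{N}S_0^{N/p}$. The Mountain Pass theorem then produces a critical point $v_{\la,\eps}$ at a level $c_{\la,\eps}\in(I_{\la,\eps}(u_{\la,\eps}),\,I_{\la,\eps}(u_{\la,\eps})+\tfrac{1}{N}S_0^{N/p})$, hence $v_{\la,\eps}\neq u_{\la,\eps}$. Positivity is enforced by restricting to nonnegative paths or by testing $I'_{\la,\eps}(v_{\la,\eps})$ against $v_{\la,\eps}^{-}$.

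\emph{Main obstacle.} The decisive step is the energy estimate for $p\neq 2$. For $p=2$ the algebraic identity $|a+b|^2=a^2+2ab+b^2$ gives an explicit expansion, but for $p\neq 2$ only one-sided Taylor-type bounds such as
\[
|a+b|^p \ge |a|^p + p|a|^{p-2}a\cdot b + C_p|b|^p, \qquad (a+b)^{\pst} \ge a^{\pst} + \pst a^{\pst-1}b + C_{\pst}\,b^{\pst},
\]
are available, and the cross terms produced by $u_{\la,\eps}\neq 0$ must be dominated by the contribution of the concave term $\la\int|u|^q$. This is precisely why the restriction \eqref{A_pq} is imposed: the lower bound $q>\pst-\tfrac{2}{p-1}$ when $p\ge 3$ is calibrated so that, after the rescaling of $U_\eps$ is carried out, the concave remainder of the right order controls the critical-cross remainder. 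Simultaneously, the fractional term is not scale-invariant, yielding an additional $O(\eps^{\alpha})$-type residue; shrinking $\eps<\eps_\la$ absorbs this residue while keeping $\la^\#$ independent of $\eps$. Balancing these three scales (the concentration parameter of $U_\eps$, the threshold $\la^\#$ via \eqref{A_pq}, and the smallness of $\eps$) is the central technical difficulty of the proof.
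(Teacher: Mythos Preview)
Your proposal follows the same four-step strategy as the paper, but there is a genuine gap in Step~1. You claim the minimizer on $\overline{B_{r_0}}$ exists by ``compactness of the embedding $X_0\hookrightarrow L^q(\Om)$ and weak lower semicontinuity of the principal part''. This does not suffice: the critical term $-\frac{1}{\pst}\|u\|_{\pst}^{\pst}$ is weakly \emph{upper} semicontinuous, so $I_{\la,\eps}$ is \emph{not} weakly lower semicontinuous and the direct method fails to show the infimum is attained. The paper circumvents this by first proving that the minimizing sequence stays strictly inside $B_{r_0}$ (using $I_{\la,\eps}|_{\partial B_{r_0}}\ge\delta_0>0$ while $c_{\la,\eps}<0$), then applying Ekeland's variational principle to produce a Palais--Smale sequence at the negative level $c_{\la,\eps}$, and finally invoking a (PS)$_c$ condition valid below a positive threshold (for $\la<\la^\#$) to recover strong convergence.

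A second, related gap concerns the local (PS)$_c$ threshold. Your assertion that ``a failure of compactness forces a bubble carrying energy at least $\frac{1}{N}S_0^{N/p}$'' is correct, but to deduce (PS)$_c$ for $c<I_{\la,\eps}(u_{\la,\eps})+\frac{1}{N}S_0^{N/p}$ you must also show that the weak limit $u_0$ of any PS sequence satisfies $I_{\la,\eps}(u_0)\ge I_{\la,\eps}(u_{\la,\eps})=c_{\la,\eps}$. This is not automatic, since $u_0$ could a priori have lower (negative) energy; the paper proves it via a fibering argument (Lemma~\ref{PS cond}) showing that any critical point with negative energy lies inside $B_{r_0}$ and hence has energy $\ge c_{\la,\eps}$.

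Finally, your description of the energy-estimate mechanism is inaccurate for $2\le p<3$. In that range the paper discards the concave term entirely (by convexity, $L_3\ge 0$), and the dominating negative contribution is the \emph{critical} cross term $-(tR)^{\pst-1}\int_\Om U_\eps^{\pst-1}u_{\la,\eps}\dx$, bounded below via a uniform-in-$\eps$ positive lower bound on $u_{\la,\eps}$ near the concentration point (Lemma~\ref{lower_bound}). Only for $p\ge 3$ is the concave term retained and used to absorb the gradient cross terms of order $\eps^{\alpha\frac{2(N-p)}{p(p-1)}}$, which is precisely where the restriction $q>\pst-\frac{2}{p-1}$ in \eqref{A_pq} arises.
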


\begin{remark}
    We remark that for $2\leq p<3$, $\eps_\la$ becomes independent of $\la$, and as a consequence, there exists $\eps_0>0$ such that \eqref{main_PDE} admits a second positive solution $v_{\la, \eps}\neq u_{\la,\eps}$ for any $\eps \in (0, \eps_0)$ (see Remark \ref{lambda-ind}).
\end{remark}
\begin{remark}
The range of $q$ for which we have obtained existence of two positive solutions in Theorem~\ref{second_sol} exactly matches with the range given in \cite[Theorem 1 and 2]{GaPa1}.
\end{remark}

The rest of the paper is organized as follows. In Section \ref{sec_prelms}, we list the notations, useful results and inequalities. In Section \ref{sec_exist_nonexist}, we prove Theorem \ref{ABC} and first part of Theorem \ref{second_sol}.  Section \ref{sec_multiplicity} is devoted to the proof of the second part of Theorem \ref{second_sol}. In the Appendix \ref{appndx}, we show the existence of a sequence of infinitely many nontrivial solutions with negative energy whose energy converges to zero.

\section{Preliminaries}\label{sec_prelms}
First, we fix the following notations and conventions to be used in this paper:

\noi\textbf{Notations:}
\begin{itemize}\label{notations}
    \item $P \lesssim_{a,b,c} Q$ represents that there exists a positive constant $C=C(a,b,c)$ such that $P \le C(a,b,c) Q$.
    \item We denote the best constant in the classical Sobolev inequality by $S_0$. i.e.,
    \begin{equation}\label{Sob_const}
    S_0=\inf_{u\in W_0^{1,p}(\Om),u\neq0}\frac{\|\nabla u\|_p^p}{\|u\|_{\pst}^p}.
    \end{equation}
    \item We have fixed $\la^\ast, \la^{\#},\la_{\ast\ast}$. $\la^\ast$ has been defined in \eqref{e5.01}.  $\la^{\#}$ and $\la_{\ast\ast}$ appear in Proposition \ref{B_r lemma} and $\la^{\#}$ is defined as $\min\{\la^\ast,\la_{\ast\ast}\}$.

    \item We consider the first Dirichlet eigenvalue, which is defined by $$\la_{1,\eps}:=\inf\{\rho_\eps(u)^p:u\in X_0, \|u\|_p^p=1\}.$$ Notice that $\la_{1,\eps_1}\leq\la_{1,\eps_2}$ for any $\eps_1\leq\eps_2$. $\la_{1,0}$ being the first Dirichlet eigenvalue of $\delp$, it is positive. Thus $\la_{1,\eps}>0$ for any $\eps>0$. Let $e_{1,\eps}$ be the corresponding eigenfunction. In \cite{GS}, Goel and Sreenadh proved that $\la_{1,\eps}$ is isolated and $e_{1,\eps}$ has constant sign, a.e.
    in $\Om$. So, we assume that it is positive, a.e. in $\Om$.
    \item We denote
    \begin{align*}
        &A_u(x,y):= |u(x)-u(y)|^{p-2}(u(x)-u(y)), \text{ and }\\
        &\mathcal{A}(u,v):= \iint\limits_{\RR \times \RR}\frac{A_u(x,y)(v(x)-v(y))}{|x-y|^{N+sp}} \dx \dy.
    \end{align*}
    \item By $u_{\la,\eps}$, we mean the solution obtained in Proposition \ref{B_r lemma}. We denote the positive minimal solution by $z_{\la,\eps}$. By $\hat{z}_{\la,\eps}$, we mean the solution of the truncated problem \eqref{trunc}. We denote the solution of the problem \eqref{sublinear} by $w_{\la,\eps}$. Finally, $v_{\la,\eps}$ is the second positive solution we get.
\end{itemize}
Let $\Omega \subset \RN$ be a bounded domain and $0 < \eps \le 1$. We consider the following function space
\begin{equation*}
    X_0:=\{u\in W^{1,p}(\RR): u|_{\Omega} \in W_0^{1,p}(\Omega),u\equiv0\text{ in }\RR\setminus\Omega\},
\end{equation*}
endowed with the norm
\begin{align*}
  \rho_{\eps}(u) := \left( \norm{\nabla \cdot}_p^p+ \eps[\cdot]_{s,p}^p \right)^{\frac{1}{p}},
\end{align*}
where $\norm{\nabla \cdot}_{p}$ is the $L^p$-norm of the gradient, and $[\cdot ]_{s,p}$ is the Gagliardo seminorm, defined as
\begin{align*}
    &\norm{\nabla u}_{p}^p=\int_{\Omega} |\nabla u|^p \dx \; \mbox{ and }\; [u]_{s,p}^p=  \iint\limits_{\RR \times \RR} \frac{|u(x)-u(y)|^p}{|x-y|^{N+sp}}\dy \dx.
\end{align*}
Observe that $\rho_{\eps}$ is equivalent to the usual norm $\rho$, defined as {\small $\rho(\cdot) := \left( \|\nabla \cdot\|_p^p+[\cdot]_{s,p}^p \right)^{\frac{1}{p}}$}. Further, using the embedding $W^{1,p}(\RR) \hookrightarrow W^{s,p}(\RR)$ and the  Poincar\'{e} inequality, $\norm{\Gr u}_p$ is also an equivalent norm in $X_0$.
The space $X_0$ is a ref{}lexive and separable Banach space with respect to $\rho_\eps(\cdot)$, $X_0$ is continuously embedded into $ L^{t}(\Omega)$ for $1\leq t\leq\pst$, and this embedding is compact when $t \neq \pst$.
\begin{definition}\label{def-WS}
A function $u\in X_0$ is called a weak solution of \eqref{main_PDE} if for every $v\in X_0$, it holds
\begin{equation}\label{weak}
\begin{aligned}
    \int_{\RR}|\nabla u|^{p-2}\nabla u\cdot\nabla v \dx+\eps\mathcal{A}(u,v)=\lambda\int_{\RR}|u|^{q-2}uv\dx+\int_{\RR}|u|^{\pst-2}uv\dx.
\end{aligned}
\end{equation}
\end{definition}

\begin{definition}
    A function $u\in X_0$ is called a weak supersolution of \eqref{main_PDE} if for every $v\in X_0$ with $v\geq0$ a.e. in $\Om$, it holds
\begin{equation}\label{e8}
    \int_{\RR}|\nabla u|^{p-2}\nabla u\cdot\nabla v \dx+\eps\AA(u,v)\geq\lambda\int_{\RR}|u|^{q-2}uv\dx+\int_{\RR}|u|^{\pst-2}uv\dx.
\end{equation}
Similarly, we say $u\in X_0$ is a subsolution of \eqref{main_PDE} if the reverse inequality holds in \eqref{e8}.
\end{definition}
Consider the following energy functional:
\begin{align}\label{I-la-eps}
    I_{\la,\eps}(u) := \frac1p\rho_\eps(u)^p-\frac{\lambda}{q}\|u_+\|_q^q-\frac{1}{\pst}\|u_+\|_{\pst}^{\pst}, \; \forall \, u \in X_0.
\end{align}
Observe that $I_{\la,\eps} \in C^1(X_0, \R)$, and every nonzero critical point of $I_{\la,\eps}$ is a non-negative nontrivial solution of \eqref{main_PDE}, and by the strong maximum principle \cite[Theorem~3.1]{BMV}(see the first paragraph of pg.10 in \cite{BMV}), it is a positive solution of \eqref{main_PDE}.
\begin{definition}
A sequence $\{u_n\}\subset X_0$ is said to be PS sequence of $I_{\la,\eps}$ at a level $c\in\R$ if
$$I_{\la,\eps}(u_n)\to c,\quad I_{\la,\eps}'(u_n)\to0\text{ in }X_0^* \quad\mbox{as}\quad n\to\infty.$$
We say that $I_{\la,\eps}$ satisfies Palais-Smale condition at the level $c$ (in short $(PS)_c$ condition) if $\{u_n\}$ is any PS sequence at a level $c$ then $\{u_n\}$ has convergent subsequence in $X_0$.
\end{definition}
 The following lemma states the classical comparison principle (see \cite[Proposition 4.1]{AC}).
\begin{lemma}\label{comparison-2}
    Let $u,v\in X_0$ satisfy $$\int_{\RR}|\nabla u|^{p-2}\nabla u\cdot\nabla \phi \dx+\eps\AA(u,\phi)\leq \int_{\RR}|\nabla v|^{p-2}\nabla v\cdot\nabla \phi \dx+\eps\AA(v,\phi)$$
    for every $\phi\in X_0, \phi \ge 0$. Then $u\leq v$ in $\Om$.
\end{lemma}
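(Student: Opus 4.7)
The approach is the classical \emph{test with the positive part of the difference} monotonicity argument. The plan is to substitute $\phi := (u-v)_+$ into the hypothesis. First, I would check that this is an admissible test function: since $u, v \in X_0$ both vanish outside $\Omega$ so does $(u-v)_+$, and the pointwise inequalities $|\nabla(u-v)_+| \le |\nabla(u-v)|$ a.e.\ and $|(u-v)_+(x) - (u-v)_+(y)| \le |(u-v)(x) - (u-v)(y)|$ (the latter because $t \mapsto t_+$ is $1$-Lipschitz) guarantee $\phi \in X_0$ with $\phi \ge 0$. Substituting $\phi$ and rearranging gives
\begin{equation*}
    \int_{\RR}\bigl(|\nabla u|^{p-2}\nabla u - |\nabla v|^{p-2}\nabla v\bigr)\cdot\nabla \phi \dx + \eps\bigl(\AA(u,\phi) - \AA(v,\phi)\bigr) \le 0.
\end{equation*}

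Next, I would show that each of the two summands is pointwise non-negative, so both must vanish. For the local summand, the classical monotonicity inequality for $\xi \mapsto |\xi|^{p-2}\xi$ on $\RN$ makes the integrand non-negative, with a quantitative lower bound by a multiple of $|\nabla(u-v)_+|^p$ when $p \ge 2$, and by the weighted version $|\nabla(u-v)_+|^2/(|\nabla u|+|\nabla v|)^{2-p}$ when $1<p<2$. For the nonlocal summand, write $w := u-v$ and set $a := u(x)-u(y)$, $b := v(x)-v(y)$, so that $a-b = w(x)-w(y)$ and the integrand becomes $(|a|^{p-2}a - |b|^{p-2}b)(w_+(x) - w_+(y))$. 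A short case analysis on the signs of $w(x), w(y)$, using the strict monotonicity of $t \mapsto |t|^{p-2}t$ on $\R$, shows this quantity is $\ge 0$ pointwise in $\RR \times \RR$: both factors vanish where $w(x), w(y) \le 0$, both equal $w(x)-w(y)$-times-a-monotone-difference where $w(x), w(y) > 0$, and in the mixed case (say $w(x) > 0 \ge w(y)$) one has $a - b > w(x) > 0$ hence $|a|^{p-2}a > |b|^{p-2}b$ while $w_+(x) - w_+(y) = w(x) > 0$.

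Forcing each summand to vanish, the quantitative monotonicity bound in the local part yields $\nabla(u-v)_+ = 0$ a.e.\ in $\Omega$; since $\phi \in W_0^{1,p}(\Omega)$, Poincaré's inequality yields $\phi \equiv 0$, i.e.\ $u \le v$ a.e.\ in $\Omega$. The main subtlety is the pointwise sign analysis for the nonlocal integrand in the mixed-sign case, where the strict monotonicity of the scalar map is essential; the $1<p<2$ case of the local inequality also requires a minor split between the set $\{|\nabla u| + |\nabla v| > 0\}$ (where the weighted bound is informative) and its complement (where $\nabla(u-v)$ vanishes trivially).
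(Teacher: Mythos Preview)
Your argument is correct: testing with $\phi=(u-v)_+$ and using the strict monotonicity of $\xi\mapsto|\xi|^{p-2}\xi$ on $\RR$ (for the local part) and of $t\mapsto|t|^{p-2}t$ on $\R$ (for the nonlocal part, via the case split you describe) is exactly the standard route, and the conclusion via Poincar\'e is fine. One cosmetic point: in the mixed case $w(x)>0\ge w(y)$ you only get $a-b\ge w(x)>0$ (equality when $w(y)=0$), not strict inequality, but this does not affect the sign conclusion.

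The paper itself does not give a proof of this lemma; it simply records it as the classical comparison principle and refers to \cite[Proposition~4.1]{AC}. Your proof is precisely the expected argument behind that reference, so there is no genuine methodological difference to discuss.
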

The following lemma lists some elementary inequalities to be used in the paper.

\begin{lemma}\label{q_ineq}
\noi {\rm{(i)}} For every $\eta,\xi\in\RR$,
\begin{align}
&(|\xi|^{t-2}\xi-|\eta|^{t-2}\eta)\cdot(\xi-\eta)\geq C(|\xi|+|\eta|)^{t-2}|\xi-\eta|^2, \text{ when } t>1. \label{elemt.ineq_1}
\end{align}
\noi {\rm{(ii)}} Suppose $1\leq t\leq3$. Then for every $a,b\geq0$ there exists $C>0$ such that
\begin{equation}\label{elemt.ineq_pst}
    \left|(a+b)^{t}-a^{t}-b^{t}-t ab(a^{t-2}+b^{t-2})\right|\leq\begin{cases}
        Cab^{t-1}&\text{if }a\geq b;\\
        Cba^{t-1}&\text{if }a\leq b.
    \end{cases}
\end{equation}
\noi {\rm{(iii)}} Suppose $t\geq3$. Then for every $a \ge 0$, $(1+a)^t\geq 1+a^t+ta+ta^{t-1}$.

\noi {\rm{(iv)}} Suppose $t\geq2$. Then for every $a \ge 0$, $(1+a)^t\geq 1+a^t+ta$.

\noi {\rm{(v)}} Let $2\leq t<3$. Then given $\zeta_1\in[t-1,2]$ there exists $C>0$ such that
\begin{align}\label{i-1}
   (1+a^2+2a\cos\theta)^{\frac t2}\leq1+a^t+ta\cos\theta+Ca^{\zeta_1},
\end{align}
for any $a\geq0$ and $\theta\in[0,2\pi]$.

\noi {\rm{(vi)}} Let $t\geq3$. Then there exists $C>0$ such that
\begin{align}\label{i-2}
   (1+a^2+2a\cos\theta)^{\frac t2}\leq1+a^t+ta\cos\theta+C(a^2+a^{p-1}),
\end{align}
for any $a\geq0$ and $\theta\in[0,2\pi]$.
\end{lemma}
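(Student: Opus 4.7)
The plan is to establish each of the six items separately through elementary calculus arguments.

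For (i), I would set $F(\xi)=|\xi|^t/t$ so that $\nabla F(\xi)=|\xi|^{t-2}\xi$, and write
\[(|\xi|^{t-2}\xi-|\eta|^{t-2}\eta)\cdot(\xi-\eta)=\int_0^1 D^2 F(\eta+s(\xi-\eta))(\xi-\eta,\xi-\eta)\,ds.\]
A direct differentiation yields $D^2F(\zeta)(\nu,\nu)\geq c|\zeta|^{t-2}|\nu|^2$ with $c=c(t)>0$. One then estimates $|\eta+s(\xi-\eta)|$ below by a multiple of $|\xi|+|\eta|$ on a subinterval of $[0,1]$ of positive length (for $t<2$ one restricts away from the zero set, using that $\xi,\eta$ cannot both be close to the segment's midpoint unless $|\xi-\eta|$ is small relative to $|\xi|+|\eta|$). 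This is the standard Lindqvist/Simon argument.

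For (ii), I would divide through by $a^t$ (after assuming $a\geq b$ by symmetry) and set $u=b/a\in[0,1]$; the inequality reduces to $|(1+u)^t-1-u^t-tu-tu^{t-1}|\leq C u^{t-1}$. Splitting into $u\in[0,1/2]$ (where a second order Taylor expansion of $(1+u)^t$ and the bounds $u^t\leq u^{t-1}$, $u\leq u^{t-1}$ (for $t\le 2$) or $u^{t-1}\le u$ (for $2\le t \le 3$) handle each term) and $u\in[1/2,1]$ (where all quantities are bounded and the claim is trivial after adjusting $C$) gives the result. Items (iii) and (iv) follow by considering $f(a)=(1+a)^t-1-a^t-ta-ta^{t-1}$ (respectively without the last term), verifying $f(0)=0$ and $f'(a)\geq 0$ by elementary algebra using $t\geq 3$ (resp. $t\geq 2$) and convexity of $a\mapsto a^{t-1}$.

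For (v) and (vi), I would split the range of $a$ at $a=1$. For $a\in[0,1]$, I Taylor-expand
\[(1+a^2+2a\cos\theta)^{t/2}=1+t a\cos\theta+\tfrac{t}{2}a^2+O(a^3)\]
uniformly in $\theta\in[0,2\pi]$; since $a^t\leq a^2\leq a^{t-1}$ for $a\in[0,1]$ when $2\leq t<3$ (respectively $a^2\leq a^{t-1}$ when $t\geq 3$), the excess over $1+a^t+ta\cos\theta$ is controlled by $Ca^{\zeta_1}$ for any $\zeta_1\in[t-1,2]$ (resp.\ by $C(a^2+a^{t-1})$). For $a\geq 1$, I factor out $a^t$ and expand in the small parameter $y=1/a$:
\[(1+a^2+2a\cos\theta)^{t/2}=a^t\bigl(1+ty\cos\theta+\tfrac{t}{2}y^2+O(y^3)\bigr)=a^t+ta^{t-1}\cos\theta+\tfrac{t}{2}a^{t-2}+O(a^{t-3}),\]
so the difference with $1+a^t+ta\cos\theta$ is $O(a^{t-1})$; since $a^{t-1}\leq a^{\zeta_1}$ for $a\geq 1$ whenever $\zeta_1\geq t-1$, this closes both (v) and (vi).

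The main technical obstacle is the bookkeeping in (v)--(vi): one must track the two-sided interpolation that the power $a^{\zeta_1}$ must satisfy, namely it must dominate $a^2$ for $a\leq 1$ and $a^{t-1}$ for $a\geq 1$. The restriction $2\leq t<3$ in (v) is precisely what allows a single exponent in $[t-1,2]$ to achieve this (and for (vi) one must retain both $a^2$ and $a^{t-1}$ because $t-1>2$). Care is also needed with the Taylor remainder uniformly in $\theta$, which is handled by noting $|a^2+2a\cos\theta|\leq 3a$ for $a\leq 1$ so the remainder in Taylor's theorem depends only on $a$.
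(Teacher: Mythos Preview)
Your proposal is correct and follows the standard elementary route; the paper itself simply defers to the literature (Bartsch--Liu for (i), Br\'ezis--Nirenberg for (ii), Garc\'ia Azorero--Peral for (v)--(vi)) and calls (iii)--(iv) elementary, so you are essentially reconstructing the arguments those references contain.

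One small correction worth noting in (v)--(vi): your Taylor expansion on $a\in[0,1]$ has the wrong second-order coefficient. Writing $g(a)=(1+a^2+2a\cos\theta)^{t/2}$, one has $g''(0)=t+t(t-2)\cos^2\theta$, so the quadratic term is $\tfrac{a^2}{2}\bigl(t+t(t-2)\cos^2\theta\bigr)$, not $\tfrac{t}{2}a^2$. Moreover, for $2\le t<3$ the third derivative is \emph{not} uniformly bounded as $a\to 1$, $\theta\to\pi$ (where $1+a^2+2a\cos\theta\to 0$), so your $O(a^3)$ remainder claim fails there. Neither issue damages the argument: what you actually use is that $g(a)-1-ta\cos\theta=O(a^2)$ uniformly in $\theta$, and this holds because $g''$ itself stays bounded---the identity $(\xi+\cos\theta)^2\le 1+\xi^2+2\xi\cos\theta$ exactly cancels the singular factor $(1+\xi^2+2\xi\cos\theta)^{t/2-2}$ in $g''$ for $t\ge 2$. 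This is precisely the uniformity point you flag at the end, and once you state the remainder as $O(a^2)$ rather than $O(a^3)$ everything closes.
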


\begin{proof}
  Proofs of (i),(iii) and (iv) are elementary. See \cite[Lemma 3.6]{BL} for (i). (ii) follows from \cite[Lemma 4]{BN2}. (v) and (vi) follow from \cite[Lemma A4]{GaPa1}.
\end{proof}
The next lemma helps us to interchange limit and integration.
\begin{lemma}\label{converg_limi}
Let $\{u_n\}$ be a PS sequence of $I_{\la,\eps}$, which is defined as in \eqref{I-la-eps}. Then there exists $u\in X_0$ such that, for every $\phi\in X_0$, the following convergences hold (up to a subsequence):
    \begin{align*}
&\mathrm{(i)}\,\int_{\Om}|\nabla u_n|^{p-2}\nabla u_n\cdot\nabla\phi\dx\to\int_{\Om}|\nabla u|^{p-2}\nabla u\cdot\nabla\phi\dx,\quad \mathrm{(ii)}\,\AA(u_n,\phi)\to\AA(u,\phi),\\
        &\mathrm{(iii)}\,\int_{\Om}|u_n|^{q-2}u_n\phi\dx\to\int_{\Om}|u|^{q-2}u\phi\dx,\\
        &\mathrm{(iv)}\,\int_{\Om}|u_n|^{\pst-2}u_n\phi\dx\to\int_{\Om}|u|^{\pst-2}u\phi\dx.
    \end{align*}
\end{lemma}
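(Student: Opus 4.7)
\textbf{Proof plan for Lemma \ref{converg_limi}.}

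First I would establish boundedness of $\{u_n\}$ in $X_0$. Using the identity
$$I_{\la,\eps}(u_n)-\frac{1}{\pst}\langle I'_{\la,\eps}(u_n),u_n\rangle = \Big(\frac{1}{p}-\frac{1}{\pst}\Big)\rho_\eps(u_n)^p-\lambda\Big(\frac{1}{q}-\frac{1}{\pst}\Big)\|u_{n,+}\|_q^q,$$
the PS conditions $I_{\la,\eps}(u_n)=c+o(1)$, $\langle I'_{\la,\eps}(u_n),u_n\rangle=o(\rho_\eps(u_n))$, and the embedding $\|u_{n,+}\|_q\le C\rho_\eps(u_n)$, the fact that $q<p$ forces $\{\rho_\eps(u_n)\}$ to be bounded. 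By reflexivity of $X_0$ and the compact embedding $X_0\embd L^t(\Om)$ for $t<\pst$, I extract a subsequence with $u_n\wra u$ in $X_0$, $u_n\to u$ in $L^t(\Om)$ for every $t\in[1,\pst)$, and $u_n\to u$ a.e.\ in $\Om$.

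Convergences \textrm{(iii)} and \textrm{(iv)} are then short. For \textrm{(iii)}, the strong $L^q$-convergence gives $|u_n|^{q-2}u_n\to|u|^{q-2}u$ in $L^{q/(q-1)}(\Om)$, and H\"older with $\phi\in L^q(\Om)$ concludes. For \textrm{(iv)}, $\{|u_n|^{\pst-2}u_n\}$ is bounded in $L^{\pst/(\pst-1)}(\Om)$ and converges a.e.\ to $|u|^{\pst-2}u$, so by the classical lemma relating a.e.\ and weak convergence in $L^r$ we obtain $|u_n|^{\pst-2}u_n\wra|u|^{\pst-2}u$ in $L^{\pst/(\pst-1)}(\Om)$, and pairing with $\phi\in L^{\pst}(\Om)$ yields \textrm{(iv)}.

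The heart of the argument is \textrm{(i)} and \textrm{(ii)}, for which I would first prove the a.e.\ convergences
$$\nabla u_n(x)\to\nabla u(x)\text{ a.e.\ in }\Om, \qquad \frac{u_n(x)-u_n(y)}{|x-y|^{(N+sp)/p}}\to\frac{u(x)-u(y)}{|x-y|^{(N+sp)/p}}\text{ a.e.\ on }\RR\times\RR.$$
I would apply a Boccardo--Murat type truncation: with $T_k(t)=\max\{-k,\min\{k,t\}\}$, I test $\langle I'_{\la,\eps}(u_n)-I'_{\la,\eps}(u),T_k(u_n-u)\rangle$. The concave term $\lambda\int(|u_n|^{q-2}u_n-|u|^{q-2}u)T_k(u_n-u)\dx$ vanishes by \textrm{(iii)}; the delicate critical term $\int(|u_n|^{\pst-2}u_n-|u|^{\pst-2}u)T_k(u_n-u)\dx$ is handled by Vitali, since its integrand is dominated by the integrable $k(|u_n|^{\pst-1}+|u|^{\pst-1})$ and tends to $0$ a.e. The monotonicity inequality \eqref{elemt.ineq_1}, applied simultaneously to the local and to the nonlocal part, then forces the non-negative ``monotonicity integrand'' on $\{|u_n-u|\le k\}$ to vanish in $L^1$, yielding the a.e.\ convergence on $\{|u_n-u|\le k\}$; letting $k\to\infty$ gives the claims above. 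Once these a.e.\ convergences are established, $|\nabla u_n|^{p-2}\nabla u_n$ is bounded in $L^{p'}(\Om)^N$ and converges a.e.\ to $|\nabla u|^{p-2}\nabla u$, hence weakly in $L^{p'}(\Om)^N$, which paired with $\nabla\phi\in L^p(\Om)^N$ gives \textrm{(i)}. Analogously, $A_{u_n}(x,y)/|x-y|^{(N+sp)/p'}$ is bounded in $L^{p'}(\RR\times\RR)$ and converges a.e.\ to $A_u(x,y)/|x-y|^{(N+sp)/p'}$, hence weakly, and pairing with $(\phi(x)-\phi(y))/|x-y|^{(N+sp)/p}\in L^p(\RR\times\RR)$ yields \textrm{(ii)}.

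The main obstacle is the critical growth in the a.e.\ convergence step: testing directly against $u_n-u$ cannot close the argument because only weak $L^{\pst}$-convergence is available for $u_n$, so the critical contribution need not vanish; it is precisely the bounded truncation $T_k(u_n-u)$ that restores pointwise control and allows Vitali's theorem to eliminate the critical term, after which the monotonicity of $|\xi|^{p-2}\xi$ does the rest.
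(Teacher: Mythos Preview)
Your proof is correct and follows essentially the same route as the paper. The paper obtains boundedness of $\{u_n\}$, cites \cite[Lemma~2.2]{DFB} for the a.e.\ convergence $\nabla u_n\to\nabla u$ (which is precisely a Boccardo--Murat truncation argument of the type you sketch), and then invokes \cite[Proposition~A.8]{AP} and \cite[Lemma~2.4]{CP} for the passage from a.e.\ convergence plus boundedness to weak convergence in the appropriate Lebesgue spaces --- exactly the mechanism you use for \textrm{(i)}, \textrm{(ii)} and \textrm{(iv)}. One small remark: for \textrm{(ii)} the a.e.\ convergence of $A_{u_n}(x,y)/|x-y|^{(N+sp)/p'}$ already follows directly from $u_n\to u$ a.e.\ in $\RR$ (via Fubini), so you do not actually need the nonlocal half of your monotonicity argument --- keeping it does no harm, but the truncation step is only genuinely required for the gradients.
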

\begin{proof}
    Since $\{u_n\}$ is a PS sequence of $I_{\la,\eps}$, $\{u_n\}$ is bounded in $X_0$ and there exists $u\in X_0$ such that, up to a subsequence, $u_n\rightharpoonup u$ in $X_0$ and $u_n\to u$ a.e. in $\Om$ and $\nabla u_n\to \nabla u$ a.e. in $\Om$ (see \cite[Lemma 2.2]{DFB}). Now (i), (iii) and (iv) follow from \cite[Proposition A.8]{AP}. Using the arguments given in \cite[Lemma 2.4]{CP} or \cite[Eq. (2.17)]{DFB}, (ii) follows.
\end{proof}

In the following lemma, we state a particular version of the comparison principle from \cite[Theorem~2.1]{G}.
\begin{lemma}\label{comparison-1}
     Let $f$ be a non-negative continuous function and $\frac{f(t)}{t^{p-1}}$ is non-increasing for $t>0$. Assume that $u,v\in X_0$ are respectively positive subsolution and supersolution of the following problem:
    \begin{equation}\label{comp_eq}
        \begin{cases}
            \delp u+\eps\frap u=f(u)&\text{in }\Om,\\
            u=0&\text{in }\RR \setminus \Om.
        \end{cases}
    \end{equation}
    Then $u\leq v$ in $\Om$.
\end{lemma}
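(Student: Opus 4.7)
The plan is to invoke a Picone / Díaz–Saa type argument adapted to the mixed local–nonlocal quasilinear setting. Set $w := (u^{p} - v^{p})_{+}$; the aim is to show $w \equiv 0$ in $\Om$. The natural test pair is $\phi_{u} := w/u^{p-1}$ used in the weak subsolution inequality for $u$, and $\phi_{v} := w/v^{p-1}$ used in the weak supersolution inequality for $v$. Since $u, v$ vanish on $\partial \Om$, one first regularizes by replacing $u,v$ with $u+\de,\, v+\de$ and works with $w_{\de} := ((u+\de)^{p} - (v+\de)^{p})_{+}$; standard $L^{\infty}$ bounds on weak sub-/supersolutions with continuous nonlinearity (Moser iteration in the mixed setting) ensure the truncated test functions belong to $X_{0}$.

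Subtracting the supersolution inequality tested against (the regularized) $\phi_{v}$ from the subsolution inequality tested against (the regularized) $\phi_{u}$ yields, schematically,
\begin{equation*}
\mathcal{L}_{\de} + \eps\, \mathcal{N}_{\de} \;\le\; \int_{\Om} \left[\frac{f(u)}{(u+\de)^{p-1}} - \frac{f(v)}{(v+\de)^{p-1}}\right] w_{\de} \dx,
\end{equation*}
where $\mathcal{L}_{\de}$ is the difference of the two local $p$-Laplacian pairings and $\mathcal{N}_{\de}$ is the difference of the corresponding $\AA(\cdot,\cdot)$ pairings. The sign of the left-hand side is dictated by Picone-type inequalities: the pointwise local Díaz–Saa inequality implies the integrand of $\mathcal{L}_{\de}$ is non-negative on $\{u, v > 0\}$, while the discrete nonlocal Picone inequality of Brasco–Franzina supplies the analogous non-negativity for the integrand of $\mathcal{N}_{\de}$. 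Hence $\mathcal{L}_{\de} + \eps\,\mathcal{N}_{\de} \ge 0$.

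The right-hand side is non-positive in the limit: since $w_{\de} > 0$ forces $u > v > 0$ (for small $\de$), the monotonicity hypothesis on $t \mapsto f(t)/t^{p-1}$ makes the bracketed integrand $\le 0$ after letting $\de \to 0$. Passing to the limit by dominated convergence (justified by the $L^{\infty}$ bounds and $X_0$ regularity) yields
\begin{equation*}
0 \;\le\; \mathcal{L}_{0} + \eps\,\mathcal{N}_{0} \;\le\; \int_{\Om}\left[\frac{f(u)}{u^{p-1}} - \frac{f(v)}{v^{p-1}}\right] w \dx \;\le\; 0,
\end{equation*}
so every term vanishes. The equality case of the Picone inequalities, together with the Dirichlet condition $u=v=0$ outside $\Om$, forces $w \equiv 0$ in $\Om$, i.e.\ $u \le v$ in $\Om$.

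The main obstacle is the regularization step: verifying that $w_{\de}/(u+\de)^{p-1}$ and $w_{\de}/(v+\de)^{p-1}$ belong to $X_{0}$, and justifying the $\de \to 0$ passage uniformly on both the local gradient term and the nonlocal double integral. The latter is delicate because the integrand couples values of the regularized quotients at two distinct points $x,y \in \RR$, and a careful splitting of $\RR \times \RR$ (based on the relative orderings of $u(x),u(y),v(x),v(y)$) is needed before dominated convergence can be invoked.
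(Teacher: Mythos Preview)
The paper does not give its own proof of this lemma; it merely cites \cite[Theorem~2.1]{G} (Gouasmia). Your D\'iaz--Saa/Picone approach is precisely the standard route for such comparison results and is what one expects in the cited reference; the outline is correct, and the regularization obstacle you flag is indeed the only genuine technical point.
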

As a corollary to the above lemma, we can see that \eqref{comp_eq} has a unique positive solution. Next we prove method of subsolution and supersolution for \eqref{main_PDE}. Let $v\in X_0\cap L^\infty(\Om)$ be a positive function and $f_\la(t)=\la|t|^{q-2}t+|t|^{\pst-2}t$. We claim that the problem
\begin{equation}\label{sub_sup_pde}
    \delp u+\eps\frap u=f_\la(v)\text{ in }\Om,\quad u=0\text{ in }\rd \setminus \Om,
\end{equation} has a unique solution. Consider the functional $$K_{\la,\eps}(u)=\frac1p\rho_\eps(u)^p-\int_{\Om}f_\la(v)u_+\dx, \; \forall \, u \in X_0.$$ Since $f_\la$ is continuous and $v\in L^{\infty}(\Om)$, $K_{\la,\eps}$ is weakly lower semicontinuous and coercive. Thus, it has a global minimizer which is a non-negative solution of \eqref{sub_sup_pde}. Since, $f_\la(v)>0$, by strong maximum principle \cite[Theorem 3.1]{BMV}, $u>0$ a.e. in $\Om$. Now letting $f(t)=f_\la(v)$ a constant, by Lemma \ref{comparison-1}, $u$ is the unique positive solution. Moreover, by \cite[Theorem 4.1]{BMV}, $u\in L^{\infty}(\Om)$.
\begin{lemma}\label{sub_sup_lemma}
    Let $\underline{u}$ and $\ov{u}$ be positive subsolution and supersolution of \eqref{main_PDE} respectively. Suppose $\underline{u},\ov{u}\in L^{\infty}(\Om)$ and $\underline{u}\leq\ov{u}$ a.e. in $\Om$. Then there exists a positive weak solution $u\in X_0$ of \eqref{main_PDE} such that $\underline{u}\leq u\leq \ov{u}$ a.e. in $\Om$.
\end{lemma}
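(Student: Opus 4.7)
The plan is to construct the desired solution via a monotone iteration scheme starting from $\underline{u}$. Set $u_0 := \underline{u}$, and for $n \ge 0$ let $u_{n+1} \in X_0$ denote the unique positive solution of
\[
-\Delta_p u_{n+1} + \varepsilon(-\Delta_p)^s u_{n+1} = f_\lambda(u_n) \;\text{ in } \Omega, \qquad u_{n+1} = 0 \;\text{ in } \mathbb{R}^N \setminus \Omega,
\]
whose existence, uniqueness, positivity, and $L^\infty$-regularity follow from the discussion preceding the lemma applied to the frozen right-hand side $f_\lambda(u_n) \in L^\infty(\Omega)$, which is positive since $u_n \ge \underline{u} > 0$.

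\textbf{Monotonicity and a priori bounds.} I first verify $\underline{u} \le u_1 \le \overline{u}$ a.e. in $\Omega$. As $\underline{u}$ is a subsolution of \eqref{main_PDE},
\[
-\Delta_p \underline{u} + \varepsilon(-\Delta_p)^s \underline{u} \le f_\lambda(\underline{u}) = -\Delta_p u_1 + \varepsilon(-\Delta_p)^s u_1,
\]
so Lemma \ref{comparison-2} yields $\underline{u} \le u_1$. Using that $\overline{u}$ is a supersolution and that $t \mapsto f_\lambda(t)$ is strictly increasing on $(0,\infty)$ (since $\lambda > 0$), the same comparison gives $u_1 \le \overline{u}$. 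Iterating this argument inductively produces $\underline{u} \le u_n \le u_{n+1} \le \overline{u}$ a.e. in $\Omega$ for every $n$. In particular $\|u_n\|_{L^\infty(\Omega)} \le \|\overline{u}\|_{L^\infty(\Omega)}$ uniformly, and testing the equation for $u_{n+1}$ with $u_{n+1}$ itself together with this $L^\infty$ control gives a uniform bound $\rho_\varepsilon(u_n) \le C$.

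\textbf{Passage to the limit.} By reflexivity of $X_0$ and the monotone a.e.\ convergence, there exists $u \in X_0$ with $\underline{u} \le u \le \overline{u}$ a.e., such that (along a subsequence) $u_n \rightharpoonup u$ in $X_0$, $u_n \to u$ a.e.\ in $\Omega$, and by dominated convergence $u_n \to u$ strongly in $L^t(\Omega)$ for every $t \in [1,\infty)$; consequently $f_\lambda(u_n) \to f_\lambda(u)$ in $L^{p'}(\Omega)$. Testing the equation for $u_{n+1}$ against $u_{n+1} - u$, the right-hand side $\int_\Omega f_\lambda(u_n)(u_{n+1}-u)\dx$ tends to zero, hence
\[
\int_\Omega |\nabla u_{n+1}|^{p-2}\nabla u_{n+1}\cdot \nabla(u_{n+1}-u)\dx + \varepsilon\, \mathcal{A}(u_{n+1}, u_{n+1}-u) \to 0.
\]
Subtracting the analogous expression with $u_{n+1}$ replaced by $u$ (which tends to zero by weak convergence, since $|\nabla u|^{p-2}\nabla u \in L^{p'}$ and the kernel-weighted differences of $u$ lie in the dual space) and invoking the monotonicity inequality \eqref{elemt.ineq_1} applied to both the gradient and the Gagliardo-type finite differences yields strong convergence $u_n \to u$ in $X_0$. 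Passing to the limit in the weak formulation of the equation for $u_{n+1}$ (directly, or via Lemma \ref{converg_limi}) then shows that $u$ is a weak solution of \eqref{main_PDE}, and positivity is immediate from $u \ge \underline{u} > 0$.

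The main obstacle is the strong $X_0$-convergence step, because the joint nonlinearity and nonlocality of $-\Delta_p + \varepsilon(-\Delta_p)^s$ forces a simultaneous $(S_+)$-type argument for the local and nonlocal parts; one must apply the inequality $(|\xi|^{p-2}\xi - |\eta|^{p-2}\eta)\cdot(\xi-\eta) \gtrsim (|\xi|+|\eta|)^{p-2}|\xi-\eta|^2$ both to $\nabla u_{n+1} - \nabla u$ and to the finite differences $(u_{n+1}(x)-u_{n+1}(y)) - (u(x)-u(y))$, and, when $1 < p < 2$, combine this weighted quadratic estimate with H\"older's inequality to recover genuine $L^p$-convergence of gradients and of the fractional differences.
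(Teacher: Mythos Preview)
Your proof is correct and uses the same monotone iteration scheme as the paper: set $u_0=\underline{u}$, solve the frozen linearised problem to obtain $u_{n+1}$, and use the comparison principle (Lemma~\ref{comparison-2}) together with the monotonicity of $f_\lambda$ to obtain $\underline{u}\le u_n\le u_{n+1}\le\overline{u}$ and a uniform $X_0$-bound.

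The difference lies in the passage to the limit. The paper argues that $\{u_n\}$ is (up to a subsequence) a Palais--Smale sequence for $I_{\lambda,\varepsilon}$, and then invokes Lemma~\ref{converg_limi} to pass each term in the weak formulation to the limit. You instead test the equation for $u_{n+1}$ against $u_{n+1}-u$, use dominated convergence on the right-hand side, subtract the analogous expression with $u$, and apply the monotonicity inequality~\eqref{elemt.ineq_1} to both the local and nonlocal parts to deduce strong convergence $u_{n+1}\to u$ in $X_0$. Your route is more self-contained, as it does not rely on the a.e.\ convergence of gradients hidden in Lemma~\ref{converg_limi} (which in turn rests on \cite[Lemma~2.2]{DFB}), and it gives the stronger conclusion of norm convergence; the paper's route is shorter given the machinery already in place.
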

\begin{proof}
    Let $u_0=\underline{u}$. Given $u_n$, inductively we define $u_{n+1}\in X_0$ to be the unique positive solution of the problem
    \begin{equation}\label{baa1}
        \delp u_{n+1}+\eps\frap u_{n+1}=f_\la(u_n)\text{ in }\Om,\quad u_{n+1}=0\text{ in }\rd \setminus \Om.
    \end{equation}
    Applying Lemma~\ref{comparison-2} repeatedly, it follows that
    \begin{equation}\label{bab}
        \underline{u}=u_0\leq\cdots\leq u_n\leq u_{n+1}\leq\cdots\leq \ov{u}\text{ a.e. in }\Om.
    \end{equation}
    Notice from \eqref{baa1}, $$\rho_\eps(u_n)^p=\int_{\Om}f_\la(u_{n-1})u_n\dx\leq \int_{\Om}f_\la(\ov{u})\ov{u}\dx\leq C.$$
    Thus, $u_n$ is bounded in $X_0$. Hence there exists $u\in X_0$ such that, up to a subsequence, $u_n\rightharpoonup u$ in $X_0$ and $u_n\to u$ a.e. in $\Om$. By \eqref{bab}, $\underline{u}\leq u\leq \ov{u}\, $ a.e. in $\Om$.
    Finally to show that $u$ is a solution of \eqref{main_PDE}, we show that $\{u_n\}$ is a PS sequence of $I_{\la,\eps}$. By \eqref{baa1},
    \begin{align*}
        &I_{\la,\eps}(u_{n+1})=\frac1p\rho_{\eps}(u_{n+1})^p-\frac\la q\|u_{n+1}\|_q^q-\frac1\pst\|u_{n+1}\|_{\pst}^{\pst}\\
        &=\la\left(\frac1p\int_{\Om}u_n^{q-1}u_{n+1}\dx-\frac1q\|u_{n+1}\|_q^q\right)+\left(\frac1p\int_{\Om}u_n^{\pst-1}u_{n+1}\dx-\frac1{\pst}\|u_{n+1}\|_{\pst}^{\pst}\right).
    \end{align*}
    As $\underline{u}\leq u_n\leq u_{n+1}\leq\ov{u}$ a.e. in $\Om$, the sequence $\{I_{\la,\eps}(u_n)\}_n$ is a bounded sequence. Further, as $u_{n+1}$ solves \eqref{baa1}, for any $\phi\in X_0$,
    \begin{align*}
        I_{\la,\eps}'(u_{n+1})(\phi)&=\int_{\Om}|\nabla u_{n+1}|^{p-2}\nabla u_{n+1}\cdot\nabla\phi\dx+\eps\AA(u_{n+1},\phi)\\
        &\quad-\la \int_{\Om}u_{n+1}^{q-1}\phi\dx-\int_{\Om}u_{n+1}^{\pst-1}\phi\dx\\
        &=\la\left(\int_{\Om}u_{n}^{q-1}\phi\dx-\int_{\Om}u_{n+1}^{q-1}\phi\dx\right)\\
        &\quad+\left(\int_{\Om}u_{n}^{\pst-1}\phi\dx-\int_{\Om}u_{n+1}^{\pst-1}\phi\dx\right).
    \end{align*}
Applying  Vitali's convergence theorem in the RHS of the above equality we conclude that  $$I_{\la,\eps}'(u_{n+1})(\phi)\to 0\text{ as }n\to\infty.$$
    Thus, applying Bolzano-Weierstrass theorem on $\{I_{\la,\eps}(u_n)\}_n$, $\{u_n\}$ has a subsequence, still denoted by $u_n$, which is a PS sequence of $I_{\la,\eps}$. Hence by Lemma \ref{converg_limi}, $u$ weakly solves  \eqref{main_PDE}.
\end{proof}
\section{Existence and nonexistence of solutions}\label{sec_exist_nonexist}

We begin by discussing the nonexistence of nontrivial solutions of \eqref{main_PDE} for $\la\le0$, using the following result from \cite[Proposition~1.4]{RS}.

\begin{proposition}\label{RS_prop}
Let $E$ be a Banach space contained in $L^1_{\text{loc}}(\RR)$, and $\|\cdot\|$ be a seminorm in $E$. Assume that for some $\alpha>0$ the seminorm $\|\cdot\|$ satisfies \begin{equation}\label{e3}
       w_\tau\in E\text{ and } \|w_\tau\|\leq\tau^{-\alpha}\|w\|, \; \forall \,w\in E, \tau>1,
\end{equation}
where $w_\tau(x)=w(\tau x)$. Let $\Omega\subset\RR$ be a bounded star-shaped domain with respect to the origin. For $p>1$ and $f\in C^{0,1}_{\text{loc}}(\ov{\Omega}\times\R)$, consider the energy functional $$\mathcal{E}(u)=\frac1p\|u\|^p-\int_{\Omega}F(x,u)\dx,$$ where $F(x,u)=\int_0^uf(x,t)\,dt$, and  $f$ satisfies the following inequality
   \begin{equation}\label{e4}
       \alpha tf(x,t)>NF(x,t)+x\cdot F_x(x,t)\quad\forall x\in\Omega, t\neq0.
   \end{equation}
Let $u \in E$ be a critical point of $\mathcal{E}$ satisfying $u=0$ in $\RR\setminus\Omega$. If $u\in L^{\infty}(\Omega)\cap W^{1,r}(\Omega)$ for some $r>1$, then $u\equiv0$.
\end{proposition}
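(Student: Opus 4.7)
The plan is to establish this nonexistence by deriving a Pohozaev--Pucci--Serrin type integral identity via a scaling argument, and then combining it with the strict pointwise non-degeneracy condition \eqref{e4} to force $u\equiv 0$.

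First, I would use the criticality of $u$ tested against $u$ itself (admissible since $u\in E$) to obtain the energy identity
\[
\|u\|^p=\int_\Omega u\,f(x,u)\,dx.
\]
For $\tau>1$, consider the dilation $u_\tau(x):=u(\tau x)$. Since $\Omega$ is star-shaped with respect to the origin, $\Omega/\tau\Subset\Omega$, so $u_\tau\in E$ and $u_\tau\equiv 0$ on $\RR\setminus\Omega$; the scaling hypothesis \eqref{e3} then yields the crucial one-sided bound $\|u_\tau\|^p\le\tau^{-\alpha p}\|u\|^p$. Change of variables $y=\tau x$ in the potential term gives $\int_\Omega F(x,u_\tau(x))\,dx=\tau^{-N}\int_\Omega F(y/\tau,u(y))\,dy$, where we used that $u$ vanishes outside $\Omega$. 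Differentiating at $\tau=1^+$ and using dominated convergence (justified by $u\in L^\infty(\Omega)\cap W^{1,r}(\Omega)$ and $f\in C^{0,1}_{\text{loc}}$), one obtains the upper estimate
\[
\limsup_{\tau\to 1^+}\frac{\mathcal E(u_\tau)-\mathcal E(u)}{\tau-1}\;\le\;-\alpha\|u\|^p + \int_\Omega \bigl(NF(y,u)+y\cdot F_x(y,u)\bigr)dy.
\]

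The second key input is a chain-rule computation: since $u$ is a critical point of $\mathcal E$ and $\tau\mapsto u_\tau$ is (one-sided) differentiable at $\tau=1$ with scaling direction formally equal to $x\cdot\nabla u$, the one-sided derivative of $\mathcal E(u_\tau)$ at $\tau=1^+$ equals $\langle\mathcal E'(u),\,x\cdot\nabla u\rangle=0$. Combining with the limsup estimate and substituting the energy identity gives
\[
\alpha\int_\Omega u\,f(x,u)\,dx \;\le\; \int_\Omega \bigl(NF(x,u)+x\cdot F_x(x,u)\bigr)dx.
\]
But \eqref{e4} integrated over $\{u\neq 0\}$ gives the \emph{reverse strict} inequality whenever $u\not\equiv 0$, a contradiction; hence $u\equiv 0$.

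The principal obstacle I expect is the rigorous justification of the chain-rule step — specifically, that the scaling direction $x\cdot\nabla u$ can legitimately be used as a test direction against the criticality of $u$, even though $x\cdot\nabla u$ need not itself lie in $E$. This is precisely where the regularity hypotheses $u\in L^\infty(\Omega)\cap W^{1,r}(\Omega)$ play the decisive role, either via approximation of $x\cdot\nabla u$ by admissible test functions inside $E$, or via a direct integration-by-parts argument in both the local and nonlocal components of $\|\cdot\|$. For the concrete seminorms of interest (fractional Sobolev, mixed local-nonlocal) this delicate verification is carried out in detail in \cite{RS}; the virtue of the abstract formulation stated here is that all the technical difficulty of handling the seminorm has been outsourced into the single scaling hypothesis \eqref{e3}.
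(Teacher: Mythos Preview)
The paper does not prove this proposition; it is quoted from \cite[Proposition~1.4]{RS}. Nonetheless, the structure of the Ros-Oton--Serra argument is visible in the paper's proof of Proposition~\ref{nonexistence} (Case~2), so a comparison is possible.

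Your scaling upper bound
\[
\limsup_{\tau\to 1^+}\frac{\mathcal E(u_\tau)-\mathcal E(u)}{\tau-1}\le -\alpha\|u\|^p + \int_\Omega\bigl(NF+ y\cdot F_x\bigr)\,dy
\]
is correct and follows from \eqref{e3} alone. The genuine gap is the chain-rule step, and you have correctly flagged it --- but your proposed remedy does not close it. You suggest that the regularity $u\in L^\infty\cap W^{1,r}$ allows one to legitimize $x\cdot\nabla u$ as a test direction. That regularity controls the \emph{nonlinearity} side, but it says nothing about the seminorm side: in the abstract setting the \emph{only} information about $\|\cdot\|$ is the one-sided bound \eqref{e3}, so you cannot even establish that $\lim_{\tau\to1^+}(\|u_\tau\|^p-\|u\|^p)/(\tau-1)$ exists, let alone compute it. Your closing sentence (``all the technical difficulty of handling the seminorm has been outsourced into \eqref{e3}'') is therefore inverted for your own argument --- your chain-rule step still needs to differentiate the seminorm.

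The actual Ros-Oton--Serra proof bypasses this entirely. Rather than differentiating $\mathcal E(u_\tau)$, one sets $\Phi(\cdot)=\tfrac1p\|\cdot\|^p$ and differentiates
\[
I_\tau:=\tau^\alpha\langle D\Phi(u),u_\tau\rangle.
\]
This is legitimate because $u_\tau\in E$ is an admissible test function; criticality gives $\langle D\Phi(u),u_\tau\rangle=\int_\Omega f(x,u)\,u_\tau\,dx$, and differentiating \emph{that} integral at $\tau=1^+$ uses only $u\in L^\infty\cap W^{1,r}$ and $f\in C^{0,1}_{\mathrm{loc}}$ (integrate $\int f(x,u)\,x\cdot\nabla u\,dx$ by parts). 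This yields the exact identity quoted in the paper as ``(4.3) of \cite{RS}'':
\[
\alpha\int_\Omega uf(x,u)\,dx=\int_\Omega\bigl(NF+x\cdot F_x\bigr)dx+\frac{d}{d\tau}\bigg|_{\tau=1^+}I_\tau.
\]
Finally, the triangle-inequality bound $\langle D\Phi(u),v\rangle\le\|u\|^{p-1}\|v\|$ (\cite[Lemma~4.1]{RS}) together with \eqref{e3} gives $I_\tau\le\|u\|^p=I_1$ for all $\tau>1$, hence $\frac{d}{d\tau}\big|_{\tau=1^+}I_\tau\le0$, and \eqref{e4} concludes. So the seminorm is genuinely touched only through \eqref{e3} and a convexity consequence --- but only because one differentiates $I_\tau$, not $\mathcal E(u_\tau)$.
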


\begin{proposition}\label{nonexistence}
Let $p>2$, $q\in[2,p)$ and $\eps\in(0,1]$. Let $\Omega$ be a bounded and star-shaped domain with $\partial\Omega$ being $C^{1,\alpha}$, for some $\alpha\in(0,1)$. Then, for $\lambda\leq 0$, \eqref{main_PDE} does not admit any nontrivial solution.
\end{proposition}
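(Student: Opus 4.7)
The plan is to apply Proposition \ref{RS_prop} with the Banach space $E := X_0$ equipped with seminorm $\|\cdot\| := \rho_\eps(\cdot)$, nonlinearity $f(t) := \lambda|t|^{q-2}t + |t|^{p^*-2}t$ and primitive $F(t) := \frac{\lambda}{q}|t|^q + \frac{1}{p^*}|t|^{p^*}$, so $F_x\equiv 0$; the hypothesis $q\ge 2$ (and $p^*-1>1$) ensures $f\in C^{0,1}_{\mathrm{loc}}$. The regularity requirement $u\in L^\infty(\Omega)\cap W^{1,r}(\Omega)$ for some $r>1$ follows from \cite[Theorem~4.1]{BMV} together with the $C^{1,\alpha}$ theory for mixed local-nonlocal $p$-Laplacians from \cite{AC, GK, PG} cited in the introduction.

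I first verify \eqref{e3}. For $\tau>1$ and $u_\tau(x):=u(\tau x)$, the star-shapedness of $\Omega$ with respect to the origin gives $\tau^{-1}\Omega\subset\Omega$, so $u_\tau\in X_0$. A direct change of variables produces
\begin{align*}
\|\nabla u_\tau\|_p^p = \tau^{p-N}\|\nabla u\|_p^p, \qquad [u_\tau]_{s,p}^p = \tau^{sp-N}[u]_{s,p}^p,
\end{align*}
and since $s<1$ gives $sp-N < p-N$ and $\tau>1$, we have $\tau^{sp-N}\le \tau^{p-N}$. Hence
\begin{align*}
\rho_\eps(u_\tau)^p \le \tau^{p-N}\rho_\eps(u)^p, \qquad \text{i.e.,} \qquad \rho_\eps(u_\tau)\le \tau^{-(N-p)/p}\rho_\eps(u),
\end{align*}
establishing \eqref{e3} with $\alpha := (N-p)/p > 0$.

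Next I verify \eqref{e4}. A direct computation yields
\begin{align*}
\alpha\, t f(t) - N F(t) = \lambda|t|^q\Bigl(\tfrac{N-p}{p}-\tfrac{N}{q}\Bigr) + |t|^{p^*}\Bigl(\tfrac{N-p}{p}-\tfrac{N}{p^*}\Bigr).
\end{align*}
The critical coefficient vanishes by the Sobolev identity $\tfrac{N}{p^*}=\tfrac{N-p}{p}$, while the sublinear coefficient $\tfrac{Nq-pq-Np}{pq}$ is strictly negative since $q<p$. Consequently, for $\lambda<0$ we have $\alpha t f(t)>NF(t)$ for all $t\neq 0$, and Proposition \ref{RS_prop} yields $u\equiv 0$.

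The main obstacle is the borderline case $\lambda=0$, where the computation degenerates to equality in \eqref{e4}. I will bypass this by exploiting that \eqref{e3} is itself strict on $X_0\setminus\{0\}$ for the mixed norm: for $\tau>1$,
\begin{align*}
\tau^{p-N}\rho_\eps(u)^p - \rho_\eps(u_\tau)^p = \eps\,\tau^{sp-N}\bigl(\tau^{(1-s)p}-1\bigr)[u]_{s,p}^p > 0,
\end{align*}
whenever $u\in X_0\setminus\{0\}$ (so $[u]_{s,p}>0$ because $u$ is compactly supported in $\Omega$). Retracing the proof of \cite[Proposition~1.4]{RS} with this strict surplus in place of strict \eqref{e4}, and combining it with the energy identity $\rho_\eps(u)^p = \|u\|_{p^*}^{p^*}$ obtained by testing the equation against $u$ (which makes the critical Pohozaev-type contribution cancel), the residual positive term $\eps(1-s)[u]_{s,p}^p$ from the mixed structure forces $[u]_{s,p}=0$, hence $u\equiv 0$.
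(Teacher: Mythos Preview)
Your proposal is correct and follows essentially the same approach as the paper: same choice of $\alpha=(N-p)/p$, same scaling verification of \eqref{e3}, same verification of \eqref{e4} for $\lambda<0$, and for $\lambda=0$ the same key observation that the nonlocal term scales with a strictly smaller exponent, producing the residual $\eps(1-s)[u]_{s,p}^p$ that forces $u\equiv 0$. The only difference is presentational: the paper carries out the $\lambda=0$ step explicitly by defining $I_\tau:=\tau^\alpha\langle D\Phi(u),u_\tau\rangle$, invoking the identity from the proof of \cite[Proposition~1.4]{RS} to get $\tfrac{d}{d\tau}\big|_{\tau=1^+}I_\tau=0$ (this is where the equality $\alpha t f_0(t)=NF_0(t)$ is used, rather than the energy identity you mention), and then bounding $I_1-I_\tau\ge \tfrac{\eps}{p}[u]_{s,p}^p(1-\tau^{sp-p})$ to obtain $\tfrac{d}{d\tau}\big|_{\tau=1^+}I_\tau\le -(1-s)\eps[u]_{s,p}^p$; your ``retracing the proof'' sketch is pointing at exactly this computation.
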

\begin{proof}
 Let $u\in X_0$ be a solution of \eqref{main_PDE}. By \cite{AC}, $u\in C^1(\ov{\Om})$. We see that, for $\tau>1$,
\begin{align*}
    \rho_\eps(u_\tau)^p=\|\nabla u_\tau\|_p^p+\eps[u_\tau]_{s,p}^p= \tau^{-(N-p)}\|\nabla u\|_p^p+\eps\tau^{-(N-sp)}[u]_{s,p}^p \leq \tau^{-(N-p)}\rho_\eps(u)^p.
\end{align*}Thus $\rho_\eps$ satisfies \eqref{e3} with $\alpha=\frac{N-p}{p}$. It is also easy to check that $f_\la(t)=\lambda|t|^{q-2}t+|t|^{\pst-2}t\in C^{0,1}_{\text{loc}}(\R)$.

\vspace{4pt}
\noi\textbf{Case-1:} Suppose $\la<0$. Since $\pst>q$,
\begin{align*}
    \alpha tf_\la(t)=N\left(\lambda\frac{|t|^q}{\pst}+\frac{|t|^{\pst}}{\pst}\right)>N\left(\lambda\frac{|t|^q}{q}+\frac{|t|^{\pst}}{\pst}\right)=N F_\la(t)+x\cdot (F_\la)_x(t),
\end{align*}
where $F_\la(t)=\int_0^tf_\la(s)\ds$. Thus $f_\la$ satisfies \eqref{e4}. Hence, by Proposition \ref{RS_prop}, the only bounded solution is the trivial solution. Therefore, $u \equiv 0$, as required.

\vspace{4pt}
\noi\textbf{Case-2:} Suppose $\la=0$. Then we have $$\alpha tf_0(t)=\frac{N}{\pst}|t|^{\pst}=N F_0(t)+x\cdot (F_0)_x(t).$$
So, it does not satisfy the inequality \eqref{e4}. Define $\Phi(\cdot):=\frac1p\rho_{\eps}(\cdot)^p$ and
$$I_\tau:=\tau^\alpha\langle D\phi(u),u_\tau\rangle=\tau^{\alpha}\left(\int_{\Om}|\nabla u|^{p-2}\nabla u\cdot\nabla u_\tau\dx+\eps\AA(u,u_\tau)\right).$$
Using (4.3) from the proof of \cite[Proposition 1.4]{RS}, we have
$$\alpha\int_{\Om}uf_0(u)\dx=\int_{\Om}(N F_0(u)+x\cdot (F_0)_x(u))\dx+\frac{d}{d\tau}\bigg|_{\tau=1^+}I_{\tau}.$$
Which implies that
\begin{equation}\label{I_tau=0}
\frac{d}{d\tau}\bigg|_{\tau=1^+}I_{\tau}=0.
\end{equation}
We also have, by \cite[Lemma 4.1]{RS}
\begin{align*}
I_\tau&\leq p\tau^\alpha \Phi(u)^{\frac1{p'}}\Phi(u_\tau)^{\frac1p}=\tau^\alpha\rho_\eps(u)^{\frac p{p'}}\rho_\eps(u_\tau)=\tau^\alpha I_1^{\frac1{p'}}\rho_{\eps}(u_\tau)\\&\leq \frac 1{p'}I_1+\frac1p\tau^{N- p}\rho_\eps(u_\tau)^p=\frac 1{p'}I_1+\frac1p \left(\|\nabla u\|_p^p+\eps\tau^{sp-p}[u]_{s,p}^p\right).
\end{align*}
Thus,
\begin{align*}
    I_1-I_\tau\geq \frac{\eps}p[u]_{s,p}^p(1-\tau^{sp-p}).
\end{align*}
Notice that $\lim_{h\to0}\frac{1-(h+1)^{sp-p}}{h}=p-ps.$ Therefore, 
\begin{align*}
    -\frac{d}{d\tau}\bigg|_{\tau=1^+}I_{\tau}&\geq (1-s)\eps[u]_{s,p}^{p},
\end{align*}
i.e., $\frac{d}{d\tau}\bigg|_{\tau=1^+}I_{\tau}\leq -(1-s)\eps[u]_{s,p}^p$.  Thus, by \eqref{I_tau=0}, $[u]_{s,p}=0$. Hence, $u\equiv0$.
\end{proof}

\begin{remark}
The above proof also indicates that for every $q<\pst$ and $\la\leq0$, \eqref{main_PDE} does not have any nontrivial solution on a bounded, star-shaped domain.
\end{remark}
Next, we proceed to prove the existence of a positive solution to \eqref{main_PDE} provided $\la$ lies in a certain range. Define the closed ball $B_r:=\{u\in X_0: \rho_\eps(u)\leq r\}$ in $X_0$.
\begin{proposition}\label{B_r lemma}
    Let $q\in(1,p)$ and $\eps\in(0,1]$. Then there exist $r_0>0$ independent of $\eps$ such that for every $0<r\leq r_0$, there exists $\la^{\#}>0$, independent of $\eps$, such that for every $0<\la<\la^{\#}$, \eqref{main_PDE} has a positive solution $u_{\la,\eps}$ in the interior of $B_{r}$.
\end{proposition}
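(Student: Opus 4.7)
The plan is to show that $I_{\la,\eps}$ has a local-minimum geometry on the closed ball $\overline{B_{r_0}}$ with $r_0$ independent of $\eps$, and then extract a nontrivial minimizer by combining Ekeland's variational principle with a Brezis--Lieb decomposition at the negative energy level. The observation that keeps everything $\eps$-uniform is that $\rho_\eps(u) \geq \|\nabla u\|_p$ for every $\eps\in(0,1]$, so Sobolev and Poincar\'e produce constants $C_1, C_2>0$ independent of $\eps$ with $\|u\|_q^q \leq C_1 \rho_\eps(u)^q$ and $\|u\|_{\pst}^{\pst} \leq C_2 \rho_\eps(u)^{\pst}$. Setting $t=\rho_\eps(u)$,
$$ I_{\la,\eps}(u) \geq \tfrac{1}{p} t^p - \tfrac{C_1 \la}{q} t^q - \tfrac{C_2}{\pst} t^{\pst}. $$
The auxiliary function $h(t):=\tfrac{1}{p} t^{p-q} - \tfrac{C_2}{\pst} t^{\pst-q}$ is positive near zero, tends to $-\infty$, and hence attains a strictly positive maximum at some $r_0>0$. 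I would then pick $\la^{\#}>0$ small enough (depending only on $r_0, C_1, C_2, p, q, \pst$, hence independent of $\eps$) so that $h(r_0) > C_1 \la/q$ for every $\la\in(0,\la^{\#})$, which yields $\inf_{\partial B_{r_0}} I_{\la,\eps} \geq \alpha_0 > 0$ uniformly in $\eps$.

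To exhibit negative values inside the ball, fix $\phi\in C_c^\infty(\Omega)$ with $\phi \geq 0$, $\phi\not\equiv 0$; since the Gagliardo seminorm of such $\phi$ is finite, $\rho_\eps(\phi)$ is bounded uniformly in $\eps\in(0,1]$. The expansion
$$ I_{\la,\eps}(t\phi) = \tfrac{t^p}{p}\rho_\eps(\phi)^p - \tfrac{\la t^q}{q}\|\phi\|_q^q - \tfrac{t^{\pst}}{\pst}\|\phi\|_{\pst}^{\pst}, $$
together with $q<p$, shows $I_{\la,\eps}(t\phi)<0$ and $\rho_\eps(t\phi)<r_0$ for $t$ sufficiently small. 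Hence $c_{\la,\eps}:=\inf_{\overline{B_{r_0}}} I_{\la,\eps} < 0$. Since $I_{\la,\eps}$ is continuous and bounded below on the complete metric space $\overline{B_{r_0}}$, Ekeland's variational principle delivers $\{u_n\}\subset \overline{B_{r_0}}$ with $I_{\la,\eps}(u_n)\to c_{\la,\eps}$ and $\|I'_{\la,\eps}(u_n)\|_{X_0^*} \to 0$; the gap $c_{\la,\eps}<0<\alpha_0 \leq \inf_{\partial B_{r_0}} I_{\la,\eps}$ forces $u_n \in \interior(B_{r_0})$ for all large $n$, so $\{u_n\}$ is a genuine Palais--Smale sequence.

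By boundedness, along a subsequence $u_n \rightharpoonup u_{\la,\eps}$ in $X_0$ with $u_{\la,\eps}\in\overline{B_{r_0}}$, and Lemma~\ref{converg_limi} implies $u_{\la,\eps}$ is a weak solution of \eqref{main_PDE}. To upgrade weak to strong convergence, I would apply the Brezis--Lieb lemma to both $\|\cdot\|_{\pst}^{\pst}$ and each summand of $\rho_\eps^p$ (it is available for $\|\nabla\cdot\|_p^p$ and $[\cdot]_{s,p}^p$); combined with the compact embedding $X_0\embd L^q(\Omega)$, $I'_{\la,\eps}(u_n)(u_n)=o(1)$, and $I'_{\la,\eps}(u_{\la,\eps})(u_{\la,\eps})=0$, this produces
$$ \rho_\eps(u_n-u_{\la,\eps})^p - \|u_n - u_{\la,\eps}\|_{\pst}^{\pst} \to 0, \qquad c_{\la,\eps} = I_{\la,\eps}(u_{\la,\eps}) + \tfrac{L}{N}, $$
where $L\geq 0$ is the common limit. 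Since $u_{\la,\eps}\in\overline{B_{r_0}}$ gives $I_{\la,\eps}(u_{\la,\eps})\geq c_{\la,\eps}$, the energy identity forces $L\leq 0$ and hence $L=0$, so $u_n\to u_{\la,\eps}$ strongly in $X_0$ and $I_{\la,\eps}(u_{\la,\eps})=c_{\la,\eps}<0$. In particular $u_{\la,\eps}\not\equiv 0$ and $u_{\la,\eps}\in\interior(B_{r_0})$; positivity then follows from the comment after \eqref{I-la-eps}, since every nonzero critical point of $I_{\la,\eps}$ is positive by the strong maximum principle \cite[Theorem~3.1]{BMV}.

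The main obstacle is precisely this upgrade from weak to strong convergence: the embedding $X_0\embd L^{\pst}(\Omega)$ is only continuous, so $I_{\la,\eps}$ is not weakly lower semicontinuous (the term $-\|u_+\|_{\pst}^{\pst}$ is only weakly upper semicontinuous), and one cannot recover $I_{\la,\eps}(u_{\la,\eps})=c_{\la,\eps}$ from weak convergence alone. The Brezis--Lieb decomposition, paired with $c_{\la,\eps}$ sitting strictly below the concentration threshold $\tfrac{1}{N} S_0^{N/p}$, is what rules out bubbling; the $\eps$-uniformity of $r_0$ and $\la^{\#}$ in turn rests on invoking Sobolev and Poincar\'e with the dominating norm $\|\nabla\cdot\|_p \leq \rho_\eps$, rather than $\rho_\eps$ itself.
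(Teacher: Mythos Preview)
Your proof is correct and follows the same overall scheme as the paper---build a local-minimum geometry on $\overline{B_{r_0}}$ with $\eps$-independent constants, observe $c_{\la,\eps}<0<\inf_{\partial B_{r_0}}I_{\la,\eps}$, and run Ekeland to produce a Palais--Smale sequence in the interior. The genuine difference lies in the compactness step. The paper does not argue directly via Brezis--Lieb; instead it invokes the ready-made $(\mathrm{PS})_c$ condition from \cite[Lemma~2.4(ii)]{DFB}, which holds below the $\la$-dependent threshold in \eqref{PSc}. This forces the paper to introduce a second smallness parameter $\la^\ast$ (see \eqref{e5.01}) to guarantee that this threshold is positive, and then sets $\la^{\#}=\min\{\la^\ast,\la_{\ast\ast}\}$. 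Your route is more self-contained: from Brezis--Lieb you extract $c_{\la,\eps}=I_{\la,\eps}(u_{\la,\eps})+L/N$ with $L\ge 0$, and the single observation that the weak limit stays in $\overline{B_{r_0}}$ (by weak lower semicontinuity of $\rho_\eps$) gives $I_{\la,\eps}(u_{\la,\eps})\ge c_{\la,\eps}$, hence $L=0$. This is cleaner for the present proposition and in principle yields a larger admissible range of $\la$ (only the boundary-gap constraint is needed). On the other hand, the paper's detour through \cite{DFB} records the explicit $(\mathrm{PS})_c$ threshold \eqref{PSc} and the constant $\la^\ast$, which are reused verbatim in the appendix (Proposition~\ref{inf_non_triv}); your argument, while sufficient here, would not furnish those by-products. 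Note also that the Brezis--Lieb splitting for $\rho_\eps$ requires $\nabla u_n\to\nabla u_{\la,\eps}$ a.e., which is available through the machinery behind Lemma~\ref{converg_limi} (cf.\ \cite[Lemma~2.2]{DFB}), so your citation of that lemma is doing real work.
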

\begin{proof}
By the Sobolev embedding $X_0 \hookrightarrow L^{p^*}(\Omega)$, we can choose $r_0>0$ and $\delta_0>0$  small enough such that, for every $0<r\leq r_0$,
    \begin{equation}\label{lbound-1}
    \begin{aligned}
        &\frac1p\rho_\eps(u)^p-\frac1{\pst}\|u_+\|_{\pst}^{\pst}\geq\frac1p\rho_\eps(u)^p-C_1\rho_\eps(u)^{\pst}\geq \left\{\begin{array}{ll}
             0, & \forall\,u\in B_{r}; \\
             2\delta_0, & \forall\,u\in \pa B_{r}.  \\
             \end{array} \right.
    \end{aligned}
    \end{equation}
Let $I_{\la,\eps}$ be as defined in \eqref{I-la-eps}. Therefore, $I_{\la,\eps}(u)\geq -\frac{\la}{q}\|u_+\|_q^q\geq-\frac{\la C}{q}r_0^q>-\infty$ for every $u\in B_{r}$. Thus $I_{\la,\eps}$ is bounded from below on $B_{r}$. Define, $c_{\la,\eps}:=\inf_{B_{r_0}}I_{\la,\eps}(u).$ For any $v\neq0$ in $X_0$ and $t>0$,
    \begin{align*}
        I_{\la,\eps}(tv)&<\frac{t^p}p\rho_1(v)^p-\frac{\la t^q}{q}\|v_+\|_q^q.
    \end{align*}
\noi It is easy to see that for small $t$, $I_{\la,\eps}(tv)<0$. Hence $-\infty<c_{\la,\eps}<0$. Consider a minimizing sequence $u_{n,\eps}\in B_{r}$ i.e. $I_{\la,\eps}(u_{n,\eps})\to c_{\la,\eps} \text{ as } n\to\infty.$
We claim that $\rho_\eps(u_{n,\eps})\leq {r}-\eps_0$ for every $n$ and for some $\eps_0$ independent of $n$. If not, $\rho_\eps(u_{n,\eps})\to {r}$. Thus, by Sobolev inequality,
\begin{align*}
 c_{\la,\eps}=\lim_{n\to\infty}I_{\la,\eps}(u_{n,\eps})&\geq \lim_{n\to\infty}\left(\frac1p\rho_\eps(u_{n,\eps})^p-C_1\rho_\eps(u_{n,\eps})^{\pst}-\la C_2\rho_\eps(u_{n,\eps})^{q}\right)\\
&\geq 2\delta_0-\la C_2 r^q.
\end{align*}
Choosing $\la_{\ast\ast}>0$ such that for every $0<\la<\la_{\ast\ast}$ it holds $2\delta_0-\la C_2 r^q>\delta_0$. Thus we get  $$0>c_{\la,\eps}>\delta_0>0,$$
which yields a contradiction and hence $u_{n,\eps}\in B_{r-\eps_0}$ for every $n$. Let $0<\eps_1<\eps_0$, then $B_{r-\eps_0}\subset B_{r-\eps_1}$. Applying Ekeland's variational principle on the complete metric space $B_{r-\eps_1}$ (w.r.t Euclidean metric), we get
  $$c_{\la,\eps}\leq I_{\la,\eps}(u_{n,\eps})\leq c_{\la,\eps}+\frac{1}{n},$$  $$ I_{\la,\eps}(u_{n,\eps})\leq I_{\la,\eps}(v)+\frac1n\rho_{\eps}(u_{n,\eps}-v), \; \forall \, v\in B_{r-\eps_1}, \, v\neq u_{n,\eps}.$$
  Further, $$I_{\la,\eps}(v)=I_{\la,\eps}(u_{n,\eps})+I_{\la,\eps}'(u_{n,\eps})(v-u_{n,\eps})+o(\rho_\eps(v-u_{n,\eps})).$$
  Let $w\in X_0$ and $t>0$ be such that $\rho_\eps(w)=1$ and $v=u_{n,\eps}+tw\in B_{r-\eps_1}$. Therefore, using the above relation we get $$-\frac tn\leq I_{\la,\eps}(u_{n,\eps}+tw)-I_{\la,\eps}(u_{n,\eps})=tI_{\la,\eps}'(u_{n,\eps})w+o(t).$$
  Dividing by $t$ and letting $t\to0$, we obtain $$-\frac1n\leq I_{\la,\eps}'(u_{n,\eps})w.$$Replacing $-w$ by $w$, we conclude $$\|I_{\la,\eps}'(u_{n,\eps})\|\leq\frac1n.$$
Thus, $u_{n,\eps}$ is a PS sequence of $I_{\la,\eps}$ at the level $c_{\la,\eps}<0$. By \cite[Lemma 2.4(ii)]{DFB}, we know that  $I_{\la,\eps}$ satisfies (PS)$_c$ for any
\begin{equation}\label{PSc}
    c<\frac1N S_0^{\frac{N}{p}}-|\Omega|\left(\frac1p-\frac1{\pst}\right)^{-\frac{q}{\pst-q}}\left(\lambda\left(\frac1q-\frac1p\right)\right)^{\frac{\pst}{\pst-q}}.
\end{equation}
Choose
\begin{equation}\label{e5.01}
    \lambda^\ast:=\frac{\left(\frac{1}{N |\Omega|}   S_0^{\frac{N}{p}}\right)^{\frac{\pst-q}{\pst}}\left(\frac1p-\frac1{\pst}\right)^{\frac{q}{\pst}}}{\frac1q-\frac1p},
\end{equation}
so that for every $\lambda\in(0,\lambda^\ast),$
$$ \frac1N S_0^{\frac{N}{p}}>|\Omega|\left(\frac1p-\frac1{\pst}\right)^{-\frac{q}{\pst-q}}\left(\lambda\left(\frac1q-\frac1p\right)\right)^{\frac{\pst}{\pst-q}}.$$
Hence, for any $0<\la<\la^{\#}:=\min\{\la^\ast,\la_{\ast\ast}\},$ there exists $u_{\la,\eps}\in B_{r-\eps_0}$ such that $u_{n,\eps}\to u_{\la,\eps}$ in $X_0$, $I_{\la,\eps}(u_{\la,\eps})=c_{\la,\eps}$ and $u_{\la,\eps}$ is a critical point of $I_{\la,\eps}$. Hence $u_{\la,\eps}$ is a positive solution of \eqref{main_PDE}.
\end{proof}

\vspace{2mm}

Next, we consider the following purely sublinear problem:
\begin{equation}\label{sublinear}
    \delp u+\eps\frap u=\lambda|u|^{q-2}u\text{ in }\Omega,\quad u>0 \text{ in }\Omega,\quad
    u=0\text{ in }\RR\setminus\Omega.
\end{equation}
Notice that $$a_0:=\lim_{t\downarrow0}\frac{\la t^{q-1}}{t^{p-1}}=\infty, \text{ and } a_{\infty}:=\lim_{t\to\infty}\frac{\la t^{q-1}}{t^{p-1}}=0.$$
Thus, by \cite[(1.7)]{BMV}, $\la_1(\delp+\eps\frap-a_0)=-\infty$ and recalling the notations from Section \ref{sec_prelms}, $$\la_1(\delp+\eps\frap-a_\infty)=\la_1(\delp+\eps\frap)=\la_{1,\eps}>\la_{1,0}>0.$$
With $f(t)=\la|t|^{q-2}t$, we can apply \cite[Theorem~1.2]{BMV} (since $\eps$ is immaterial) to get the unique positive solution $w_{\la,\eps}$ to \eqref{sublinear}. Define the energy functional corresponding to \eqref{sublinear} by
\begin{equation}\label{J-la-eps}
    J_{\la,\eps}(u)=\frac1p\rho_\eps(u)^p-\frac{\la}{q}\|u\|_{q}^q, \; \forall \, u \in X_0.
\end{equation}

By \cite[Proposition~6.2]{BMV}, $w_{\la,\eps}$ is in fact the unique global minimizer of $J_{\la,\eps}$ and $J_{\la,\eps}(w_{\la,\eps})<0$. Applying \cite[Theorem~4.1]{BMV} and \cite[Theorem~1.1 and Theorem~1.2]{AC}, we also have $w_{\la,\eps}\in C^{1,\al}(\overline{\Om})$ for some $\alpha\in(0,1)$ and $\pa_\nu w_{\la,\eps}<0$ on $\pa\Om$.
\begin{lemma}\label{Laepsinfty}
For $\eps \in (0, 1]$, we define $$\La_\eps=\sup\{\la:\eqref{main_PDE}\text{ has a positive solution}\}.$$ Then $0<\La_\eps<\infty$.
\end{lemma}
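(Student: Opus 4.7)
The positivity $\Lambda_\eps>0$ is immediate from Proposition~\ref{B_r lemma}: for every $\la\in(0,\la^{\#})$ problem \eqref{main_PDE} admits a positive weak solution, and since $\la^{\#}>0$ is independent of $\eps$, we conclude $\Lambda_\eps\geq \la^{\#}>0$. My plan for the upper bound is to compare any positive solution of \eqref{main_PDE} against the first Dirichlet eigenfunction $e_{1,\eps}$ of $\delp+\eps\frap$ through a Picone-type inequality, and to use the resulting integral inequality together with a purely scalar analysis to force $\la$ to stay bounded.

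I would first carry out the scalar computation. Since $1<q<p<\pst$, the map $g_\la(t):=\la t^{q-p}+t^{\pst-p}$ on $(0,\infty)$ has a unique global minimum at $t_\ast=\bigl(\la(p-q)/(\pst-p)\bigr)^{1/(\pst-q)}$, and a direct calculation yields
\[
\min_{t>0}g_\la(t) = C(p,q)\,\la^{(\pst-p)/(\pst-q)},
\]
for some positive constant $C(p,q)$; in particular, $\min_{t>0}g_\la(t)\to\infty$ as $\la\to\infty$. Now let $u\in X_0$ be a positive weak solution of \eqref{main_PDE}. By the regularity theory in \cite{AC} together with the Hopf boundary behavior recalled after Lemma~\ref{comparison-1}, both $u$ and $e_{1,\eps}$ lie in $C^{1,\al}(\overline{\Om})$ with non-vanishing interior normal derivative on $\pa\Om$; hence $e_{1,\eps}/u$ is bounded on $\overline{\Om}$ and $\phi:=e_{1,\eps}^p/u^{p-1}$ is an admissible element of $X_0$.

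Testing \eqref{main_PDE} against $\phi$ produces
\[
\int_{\Om}|\nabla u|^{p-2}\nabla u\cdot \nabla \phi\dx+\eps\,\AA(u,\phi) = \int_\Om g_\la(u)\,e_{1,\eps}^p\dx \geq \Bigl(\min_{t>0}g_\la(t)\Bigr)\,\|e_{1,\eps}\|_p^p.
\]
On the other hand, combining the classical Picone inequality of Lindqvist for $\delp$ with the discrete fractional Picone inequality of Brasco--Franzina for $\frap$, applied to the pair $(u,e_{1,\eps})$, yields
\[
\int_{\Om}|\nabla u|^{p-2}\nabla u\cdot \nabla \phi\dx+\eps\,\AA(u,\phi) \leq \rho_\eps(e_{1,\eps})^p = \la_{1,\eps}\,\|e_{1,\eps}\|_p^p.
\]
Comparing the two displays and cancelling the factor $\|e_{1,\eps}\|_p^p>0$ forces $\min_{t>0}g_\la(t)\leq \la_{1,\eps}$, i.e. $C(p,q)\,\la^{(\pst-p)/(\pst-q)}\leq \la_{1,\eps}$, which is a finite upper bound on $\la$; hence $\Lambda_\eps<\infty$.

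The main obstacle is the rigorous justification of the Picone-type inequality in the mixed local--nonlocal $p$-setting and the admissibility of $\phi=e_{1,\eps}^p/u^{p-1}$ as a test function. The local part is classical, and the fractional part is precisely the one proved by Brasco--Franzina; admissibility is delicate because $u$ and $e_{1,\eps}$ both vanish on $\pa\Om$, but the $C^{1,\al}$ regularity together with the Hopf lemma guarantees that they vanish at exactly the same rate, so $\phi$ is bounded on $\overline{\Om}$, belongs to $W_0^{1,p}(\Om)$, and inherits the required fractional regularity from the embedding $W^{1,p}(\RR)\hookrightarrow W^{s,p}(\RR)$.
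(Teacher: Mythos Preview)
Your argument is correct and takes a genuinely different route from the paper's. Both proofs obtain $\Lambda_\eps\geq\la^{\#}>0$ from Proposition~\ref{B_r lemma}. For the upper bound, the paper argues by contradiction: using that the first eigenvalue $\la_{1,\eps}$ is isolated, it picks a non-eigenvalue $\tilde\la>\la_{1,\eps}$; if $\Lambda_\eps=\infty$, one can find $\bar\la$ so large that $\bar\la t^{q-1}+t^{\pst-1}>\tilde\la\,t^{p-1}$ for all $t>0$, and then a positive solution of $(\PP_{\bar\la,\eps})$ serves as a supersolution while $\delta e_{1,\eps}$ is a subsolution of the $\tilde\la$-eigenvalue problem; the sub--supersolution method then produces a positive solution of a problem known to have none. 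Your approach instead tests the equation against $\phi=e_{1,\eps}^p/u^{p-1}$ and combines the local and fractional Picone inequalities to get the quantitative bound $C(p,q)\,\la^{(\pst-p)/(\pst-q)}\leq\la_{1,\eps}$. The advantage of your method is that it is direct and yields an explicit upper bound for $\Lambda_\eps$; the cost is the extra regularity apparatus (the Hopf lemma from \cite{AC} for an arbitrary positive solution of \eqref{main_PDE}, and the Brasco--Franzina discrete Picone inequality) needed to make $\phi$ admissible and to justify the fractional Picone step. The paper's argument stays entirely within the tools it has already set up (Lemma~\ref{comparison-1}, Lemma~\ref{sub_sup_lemma}, and the isolation of $\la_{1,\eps}$), at the price of being purely qualitative.
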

\begin{proof}
    By Proposition \ref{B_r lemma}, $\La_\eps\geq\la^{\#}>0$. We prove $\La_\eps<\infty$ using the method of contradiction. Since the first Dirichlet eigenvalue $\la_{1,\eps}$ is isolated, there exists $\Tilde{\la}>\la_{1,\eps}$ such that
\begin{equation}\label{Eigenvalue_prob}
    \begin{cases}
        \delp u+\eps\frap u=\Tilde{\la} |u|^{p-2}u&\text{ in }\Om,\\
        u=0&\text{ in }\RR \setminus \Om,
    \end{cases}
\end{equation}
has no solution.
Suppose $\La_\eps=\infty$. Then there exists $\ov{\la}$ such that $$\ov{\la}t^{q-1}+t^{\pst-1}>\Tilde{\la}t^{p-1},\quad\forall \, t>0,$$
and $(\tb{\PP_{\ov{\la},\eps}})$ has a positive solution $u_{\ov{\la},\eps}$. 
Now notice that $$(\delp+\eps\frap)(\delta e_{1,\eps})=\delta^{p-1}\la_{1,\eps}e_{1,\eps}^{p-1}<\Tilde{\la} (\delta e_{1,\eps})^{p-1}.$$
By the definition of $\ov{\la}$,
\begin{align*}
    (\delp+\eps\frap)u_{\ov{\la},\eps}=\ov{\la}u_{\ov{\la},\eps}^{q-1}+u_{\ov{\la},\eps}^{\pst-1}>\Tilde{\la}u_{\ov{\la},\eps}^{p-1}.
\end{align*}
The above is true for all $\delta>0$. Thus by Lemma~\ref{comparison-1}, $\delta e_{1,\eps}\leq u_{\bar\la,\eps}$. Now we choose $\delta>0$ small enough such that $\delta e_{1,\eps}< u_{\bar\la,\eps}$.

Thus, by the method of subsolution-supersolution, there exists a nontrivial solution to \eqref{Eigenvalue_prob}, which is a contradiction. Hence, $\La_\eps$ has to be finite for every $\eps \in (0,1]$.
\end{proof}
For the positive minimal solution of \eqref{main_PDE} we require the following lemma.
\begin{lemma}\label{mis_lem}
    Let $f_\la(t)=\la t^{q-1}+t^{\pst-1}$ for $\la>0,t>0$. Then for any $0<\la<\la'<\infty$ and $M>0$, there exists $\beta_0>1$ such that $f_\la(\beta_0 t)\leq f_{\la'}(t)$ for $0<t\leq M$.
\end{lemma}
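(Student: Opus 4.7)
The plan is to exploit the fact that on a bounded interval $(0,M]$, the higher-order term $t^{p^*-1}$ can be dominated by the lower-order term $t^{q-1}$, and then to use continuity at $\beta_0=1$.

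First I would write
\begin{equation*}
    f_\la(\beta_0 t) - f_{\la'}(t) = (\lambda \beta_0^{q-1} - \lambda')\, t^{q-1} + (\beta_0^{p^*-1} - 1)\, t^{p^*-1}.
\end{equation*}
For $\beta_0>1$ the coefficient of $t^{p^*-1}$ is strictly positive. Since $p^*>q$ and $0<t\leq M$, we have $t^{p^*-1}\leq M^{p^*-q}\, t^{q-1}$, so
\begin{equation*}
    f_\la(\beta_0 t) - f_{\la'}(t) \leq \Bigl[\lambda \beta_0^{q-1} + (\beta_0^{p^*-1} - 1)M^{p^*-q} - \lambda'\Bigr]\, t^{q-1}.
\end{equation*}

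Next I would invoke continuity: the function $\Phi(\beta) := \lambda \beta^{q-1} + (\beta^{p^*-1} - 1)M^{p^*-q}$ is continuous on $[1,\infty)$ with $\Phi(1) = \lambda < \lambda'$. Hence there exists $\delta>0$ such that $\Phi(\beta)\leq \lambda'$ for all $\beta \in [1,1+\delta]$. Choosing $\beta_0 := 1+\delta$ (or any number in $(1,1+\delta]$) gives $\beta_0>1$ and
\begin{equation*}
    f_\la(\beta_0 t) - f_{\la'}(t) \leq (\Phi(\beta_0)-\lambda')\, t^{q-1} \leq 0
\end{equation*}
for every $t\in(0,M]$, which is the claim.

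The lemma is elementary; there is no real obstacle. The only subtle point to check is that the chosen $\beta_0$ is genuinely greater than $1$ (strict inequality), which is secured by taking $\beta_0$ anywhere in the open interval $(1,1+\delta]$ where $\Phi\leq\lambda'$ holds.
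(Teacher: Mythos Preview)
Your proof is correct. The paper itself does not give a proof but merely cites \cite[Lemma 3.3]{YZ}; your elementary argument---bounding $t^{p^*-1}$ by $M^{p^*-q}t^{q-1}$ on $(0,M]$ and then using continuity of $\Phi(\beta)=\lambda\beta^{q-1}+(\beta^{p^*-1}-1)M^{p^*-q}$ at $\beta=1$---is exactly the natural way to establish this, and is presumably what the cited reference does as well.
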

\begin{proof}
    The proof follows from  \cite[Lemma 3.3]{YZ}.
\end{proof}

\begin{lemma}\label{minimal_lemma}
    Let $q\in(1,p)$ and $\eps\in(0,1]$. For every $0<\la<\La_\eps$, \eqref{main_PDE} has a minimal positive weak solution $z_{\la,\eps}$. Further, the minimal solutions are strictly increasing with respect to $\la$, i.e., if $0<\la<\la'<\La_\eps$,  then $z_{\la,\eps}< z_{\la',\eps}$ a.e. in $\Omega$.
\end{lemma}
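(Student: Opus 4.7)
The plan is to realize $z_{\lambda,\eps}$ as a monotone iteration limit between an ordered sub-/super-solution pair. By the definition of $\La_\eps$, given $\lambda\in(0,\La_\eps)$ I pick $\mu\in(\lambda,\La_\eps)$ such that $(\tb{\PP_{\mu,\eps}})$ admits a positive weak solution $u_\mu$, which lies in $L^\infty(\Om)$ by the boundedness result of \cite{BMV}. Since $f_\lambda(t)<f_\mu(t)$ for $t>0$, $u_\mu$ is a supersolution of \eqref{main_PDE}. Writing $L:=\delp+\eps\frap$, I choose the subsolution $\delta e_{1,\eps}$ with $\delta>0$ small: the identity $L(\delta e_{1,\eps})=\delta^{p-1}\la_{1,\eps}e_{1,\eps}^{p-1}$ together with $e_{1,\eps}\in L^\infty(\Om)$ and $q<p$ make $L(\delta e_{1,\eps})\le f_\lambda(\delta e_{1,\eps})$ hold for all small $\delta$, and the Hopf-type boundary estimate for $u_\mu$ lets me additionally arrange $\delta e_{1,\eps}\le u_\mu$.

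I then promote this pair to the minimal positive solution by iterating as in the proof of Lemma~\ref{sub_sup_lemma}: set $u_0:=\delta e_{1,\eps}$ and inductively let $u_{n+1}\in X_0$ solve $Lu_{n+1}=f_\lambda(u_n)$ in $\Om$ with $u_{n+1}\equiv 0$ outside $\Om$. The monotonicity of $f_\lambda$ and Lemma~\ref{comparison-2} give $u_n\uparrow z_{\lambda,\eps}$ (weakly in $X_0$) for some positive weak solution $z_{\lambda,\eps}$ of \eqref{main_PDE}. To check minimality, given any positive weak solution $v$ of \eqref{main_PDE} I shrink $\delta$ via the Hopf lemma applied to $v$ so that $u_0\le v$; induction with Lemma~\ref{comparison-2} then propagates $u_n\le v$, whence $z_{\lambda,\eps}\le v$. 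Running the same iteration from the same $u_0$ at level $\lambda'$ produces $z_{\lambda',\eps}$, and comparing step by step using $f_\lambda\le f_{\lambda'}$ and Lemma~\ref{comparison-2} yields the weak monotonicity $z_{\lambda,\eps}\le z_{\lambda',\eps}$.

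To upgrade to strict monotonicity I invoke Lemma~\ref{mis_lem} with $M:=\|z_{\lambda',\eps}\|_\infty$ (finite by $L^\infty$-regularity): there is $\beta_0>1$ with $f_\lambda(\beta_0 t)\le f_{\lambda'}(t)$ for $0<t\le M$, and by shrinking $\beta_0$ I may also assume $\beta_0^{p-q}\le\lambda'/\lambda$. I claim $\psi:=z_{\lambda',\eps}/\beta_0$ is a supersolution of \eqref{main_PDE}: using the $(p-1)$-homogeneity $L\psi=\beta_0^{1-p}f_{\lambda'}(z_{\lambda',\eps})$, one computes
\[
L\psi-f_\lambda(\psi)=\bigl(\beta_0^{1-p}\lambda'-\lambda\beta_0^{1-q}\bigr)z_{\lambda',\eps}^{q-1}+\bigl(\beta_0^{1-p}-\beta_0^{1-\pst}\bigr)z_{\lambda',\eps}^{\pst-1}\ge 0,
\]
the first bracket being non-negative by $\beta_0^{p-q}\le\lambda'/\lambda$ and the second by $\beta_0>1$ with $p<\pst$. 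Pairing $\psi$ with a small subsolution $\delta e_{1,\eps}\le\psi$ and applying Lemma~\ref{sub_sup_lemma} yields a positive weak solution $u$ of \eqref{main_PDE} with $u\le z_{\lambda',\eps}/\beta_0$; minimality then gives $\beta_0 z_{\lambda,\eps}\le z_{\lambda',\eps}$ a.e.\ in $\Om$. Since $z_{\lambda,\eps}>0$ a.e.\ by the strong maximum principle, this forces $z_{\lambda',\eps}-z_{\lambda,\eps}\ge(\beta_0-1)z_{\lambda,\eps}>0$ a.e.\ in $\Om$, as required. The main obstacle is precisely this strict step: a linear strong comparison principle on $z_{\lambda',\eps}-z_{\lambda,\eps}$ is unavailable because $L$ is nonlinear, so the scaling argument furnished by Lemma~\ref{mis_lem} is the essential device.
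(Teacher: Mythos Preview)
Your minimality argument has a gap. You construct $z_{\la,\eps}$ as the limit of the iteration starting from a \emph{fixed} $u_0=\delta e_{1,\eps}$, but to compare with an arbitrary positive solution $v$ you ``shrink $\delta$'' so that $\delta e_{1,\eps}\le v$. This changes the whole sequence $(u_n)$ and, a priori, its limit: you obtain some solution $z'\le v$ with $z'\le z_{\la,\eps}$ (the wrong inequality), and there is no argument that $z'=z_{\la,\eps}$. What is missing is either a proof that the iteration limit is independent of the starting subsolution, or---more efficiently---a single subsolution that lies below \emph{every} positive solution of \eqref{main_PDE}. The paper takes the second route: it starts the iteration from $w_{\la,\eps}$, the unique positive solution of the purely sublinear problem \eqref{sublinear}. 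Any positive solution $v$ of \eqref{main_PDE} satisfies $(\delp+\eps\frap)v\ge\la v^{q-1}$, hence is a supersolution of \eqref{sublinear}, and Lemma~\ref{comparison-1} (applicable because $t\mapsto\la t^{q-1}/t^{p-1}$ is non-increasing) gives $w_{\la,\eps}\le v$ automatically. No $v$-dependent shrinking is needed, and the induction $u_n\le v$ goes through for the \emph{same} sequence that produced $z_{\la,\eps}$.

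For the strict inequality your route differs from the paper's and is correct once minimality is established (you invoke it in the last step). Two remarks. Your appeal to Lemma~\ref{mis_lem} is superfluous: you never use $f_\la(\beta_0 t)\le f_{\la'}(t)$, only the condition $\beta_0^{p-q}\le\la'/\la$, which can be imposed directly by choosing any $\beta_0\in\bigl(1,(\la'/\la)^{1/(p-q)}\bigr]$. The paper, by contrast, does use Lemma~\ref{mis_lem}: from $f_\la(\beta_0 z_{\la,\eps})\le f_{\la'}(z_{\la,\eps})\le f_{\la'}(z_{\la',\eps})$ together with the elementary bound $f_\la(\beta_0 t)\ge\beta_0^{q-1}f_\la(t)$ it gets $(\delp+\eps\frap)z_{\la',\eps}\ge(\delp+\eps\frap)\bigl(\beta_0^{(q-1)/(p-1)}z_{\la,\eps}\bigr)$, and Lemma~\ref{comparison-2} yields $z_{\la',\eps}\ge\beta_0^{(q-1)/(p-1)}z_{\la,\eps}>z_{\la,\eps}$ directly, without passing through the sub/super machinery and minimality again.
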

\begin{proof}
    Let $0<\la<\La_\eps$. By the definition of $\La_\eps$, there exists $\la'\in(\la,\La_\eps)$ such that $(\PP_{\la',\eps})$ has a positive solution. Let $\ov{z}_{\la',\eps}$ be any positive solution of $(\PP_{\la',\eps})$. Since \eqref{sublinear} has a positive solution $w_{\la,\eps}$ for any $\la>0$, using Lemma \ref{comparison-1}, it follows that $w_{\la,\eps}\leq w_{\la',\eps}\leq \ov{z}_{\la',\eps}$ a.e. in $\Omega$.
    We define $u_0:=w_{\la,\eps}$ and given $u_n$, we define $u_{n+1}$ be the unique solution of \eqref{baa1}. As $w_{\la,\eps}$ and $\ov{z}_{\la',\eps}$ are respectively subsolution and supersolution of \eqref{main_PDE}, by Lemma \ref{sub_sup_lemma}, we have $w_{\la,\eps}\leq u_n\leq u_{n+1}\leq \ov{z}_{\la',\eps}$ for all $n$ and $u_n$ converges to a positive weak solution $z_{\la,\eps}$ of \eqref{main_PDE} satisfying $w_{\la,\eps}\leq z_{\la,\eps}\leq \ov{z}_{\la',\eps}$. Furthermore, for any solution $u$ of \eqref{main_PDE}, by Lemma \ref{comparison-2}, we have $u_n\leq u$ for all $n$ which in turn implies $z_{\la,\eps}\leq u$. Since $u$ is arbitrary, $z_{\la,\eps}$ is in fact the minimal positive weak solution of \eqref{main_PDE}. Thus, for any $0<\la<\La_\eps$, \eqref{main_PDE} has a minimal positive solution $z_{\la,\eps}$. Now, replacing $\ov{z}_{\la',\eps}$ by the minimal positive solution $z_{\la',\eps}$ of $(\PP_{\la',\eps})$, we get that $z_{\la,\eps}\leq z_{\la',\eps}$. Hence, minimal solutions are increasing with respect to $\la$.
To get the strict inequality, we apply Lemma \ref{mis_lem} and comparison principle. Using \cite[Theorem~4.1 and  Remark 4.2]{BMV}, $z_{\la',\eps}\in L^{\infty}(\Om)$. Set $M=\|z_{\la',\eps}\|_{\infty}$, and $f_\la(t)=\la t^{q-1}+t^{\pst-1}$ for $\la>0,t>0$. By Lemma \ref{mis_lem}, there exists $\beta_0>1$ such that the following holds weakly
    \begin{align}\label{eqn-1}
        (\delp+\eps\frap)z_{\la',\eps}=f_{\la'}(z_{\la',\eps})\geq f_{\la'}(z_{\la,\eps})\geq f_{\la}(\beta_0 z_{\la,\eps}).
    \end{align}
    Moreover, the following equation holds weakly
    \begin{align}\label{eqn-2}
        (\delp+\eps\frap)(\beta_0^{\frac{q-1}{p-1}} z_{\la,\eps})=\beta_0^{q-1}f_{\la}(z_{\la,\eps}).
    \end{align}
    In view of \eqref{eqn-1} and \eqref{eqn-2}, and the fact that $\beta_0>1$, the following holds weakly
    \begin{align*}
        (\delp+\eps\frap)z_{\la',\eps}\geq (\delp+\eps\frap)(\beta_0^{\frac{q-1}{p-1}} z_{\la,\eps}).
    \end{align*}
    By the comparison principle (Lemma \ref{comparison-2}), we get $z_{\la',\eps}\geq \beta_0^{\frac{q-1}{p-1}} z_{\la,\eps}>z_{\la,\eps}$ a.e. in $\Om$.
\end{proof}

\begin{lemma}\label{Laeps}
    Let $q\in(1,p)$ and $\eps\in(0,1]$. Then $(\PP_{\La_{\eps},\eps})$ has a positive solution.
\end{lemma}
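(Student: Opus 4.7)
The plan is to obtain a positive solution at $\la = \La_\eps$ as the limit of the minimal solutions $z_n := z_{\la_n, \eps}$ from Lemma \ref{minimal_lemma}, taken along a sequence $\la_n \uparrow \La_\eps$. By the strict monotonicity in $\la$ established there, $\{z_n\}$ is increasing with $z_n \geq z_{\la_1, \eps} > 0$, so it converges pointwise a.e.\ in $\Om$ to some measurable function $z^\ast$ with $z^\ast \geq z_{\la_1, \eps} > 0$. Positivity of the eventual weak limit is therefore automatic once that limit is shown to lie in $X_0$.

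The central step is a uniform bound on $\rho_\eps(z_n)$. Testing the equation $(\PP_{\la_n,\eps})$ against $z_n$ itself yields the identity
\begin{equation*}
    \rho_\eps(z_n)^p = \la_n \|z_n\|_q^q + \|z_n\|_{\pst}^{\pst},
\end{equation*}
which when combined with the energy bound $I_{\la_n, \eps}(z_n) \leq 0$ rearranges, via H\"older, into
\begin{equation*}
    \Big(\tfrac{1}{p} - \tfrac{1}{\pst}\Big) \|z_n\|_{\pst}^{\pst} \leq \la_n \Big(\tfrac{1}{q} - \tfrac{1}{p}\Big) \|z_n\|_q^q \leq C(\La_\eps, |\Om|)\, \|z_n\|_{\pst}^{q}.
\end{equation*}
Since $\pst > q$, this forces $\|z_n\|_{\pst}$ to be uniformly bounded, and returning to the identity above bounds $\rho_\eps(z_n)$ as well. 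To establish $I_{\la_n, \eps}(z_n) \leq 0$ I would show that $I_{\la_n, \eps}$ is non-increasing along the monotone iteration of Lemma \ref{sub_sup_lemma} producing $z_n$: with $u_0 = w_{\la_n, \eps}$ and $u_{k+1}$ the global minimizer of $K_{\la_n, \eps}(\phi) = \frac{1}{p}\rho_\eps(\phi)^p - \int_\Om f_{\la_n}(u_k)\, \phi_+\, \dx$, the minimizing property yields $\frac{1}{p}[\rho_\eps(u_{k+1})^p - \rho_\eps(u_k)^p] \leq \int f_{\la_n}(u_k)(u_{k+1} - u_k)\, \dx$, while convexity of $t \mapsto t^r$ for $r \in \{q, \pst\}$ (applicable since $u_{k+1} \geq u_k \geq 0$) gives $\|u_{k+1}\|_r^r \geq \|u_k\|_r^r + r \int u_k^{r-1}(u_{k+1} - u_k)\, \dx$. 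Adding these contributions, the cross terms cancel and I obtain $I_{\la_n, \eps}(u_{k+1}) \leq I_{\la_n, \eps}(u_k)$; passing to the limit $k \to \infty$ via weak lower semicontinuity of $\rho_\eps(\cdot)^p$ and monotone convergence for the lower-order integrals, together with the direct computation $I_{\la_n, \eps}(w_{\la_n, \eps}) < 0$, gives $I_{\la_n, \eps}(z_n) \leq I_{\la_n, \eps}(w_{\la_n, \eps}) < 0$.

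With the uniform bound in hand, I extract a subsequence so that $z_n \rightharpoonup z_{\La_\eps, \eps}$ in $X_0$ and $z_n \to z_{\La_\eps, \eps}$ a.e.\ and strongly in $L^r(\Om)$ for $r < \pst$; the pointwise limit $z^\ast$ then coincides with $z_{\La_\eps, \eps}$. Since each $z_n$ is a critical point of $I_{\la_n, \eps}$ with $\la_n \to \La_\eps$, the sequence is a Palais--Smale sequence for $I_{\La_\eps, \eps}$, and Lemma \ref{converg_limi} allows me to pass to the limit in every term of the weak formulation \eqref{weak} and conclude that $z_{\La_\eps, \eps}$ weakly solves $(\PP_{\La_\eps, \eps})$. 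The pointwise lower bound $z_{\La_\eps, \eps} \geq z_{\la_1, \eps} > 0$ supplies positivity and finishes the proof. The main obstacle I anticipate is the energy bound $I_{\la_n, \eps}(z_n) \leq 0$: in the semilinear local case this follows from semi-stability of the linearization at a minimal solution, but in the present mixed local--nonlocal quasilinear setting I rely on the convexity-plus-iteration argument above, whose validity rests on the monotonicity and regularity of the iterates built into Lemma \ref{sub_sup_lemma}.
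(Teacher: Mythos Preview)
Your proof is correct and follows the same overall scheme as the paper: take $\la_n\uparrow\La_\eps$, produce solutions with negative energy, derive a uniform $X_0$-bound from the combination of $I_{\la_n,\eps}\le 0$ with criticality, and pass to the limit via Lemma~\ref{converg_limi}, with positivity inherited from the pointwise lower bound $z_n\ge z_{\la_1,\eps}$.

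The one genuine difference lies in how the negative-energy property is obtained. The paper does not work with the minimal solutions; instead, for each $\la\in[\la^{\#},\La_\eps)$ it truncates $f_\la$ between $w_{\la,\eps}$ and a supersolution $\ov z_{\la',\eps}$, so that the truncated energy $\hat I_{\la,\eps}$ becomes coercive and weakly lower semicontinuous, and its global minimizer $\hat z_{\la,\eps}$ automatically satisfies $I_{\la,\eps}(\hat z_{\la,\eps})=\hat I_{\la,\eps}(\hat z_{\la,\eps})\le \hat I_{\la,\eps}(w_{\la,\eps})<J_{\la,\eps}(w_{\la,\eps})<0$. Your argument instead shows directly that the energy $I_{\la_n,\eps}$ is non-increasing along the monotone iteration of Lemma~\ref{sub_sup_lemma}: the key observation that the minimizing property of $u_{k+1}$ plus convexity of $t\mapsto t^q,\,t^{\pst}$ yields $I_{\la_n,\eps}(u_{k+1})\le I_{\la_n,\eps}(u_k)$ is clean and avoids truncation entirely. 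Your route is arguably more elementary and ties the negative energy specifically to the \emph{minimal} solution, whereas the truncation approach is more standard and produces an auxiliary solution $\hat z_{\la,\eps}$ (possibly distinct from $z_{\la,\eps}$). Both methods feed identically into the limiting step.
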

\begin{proof}
\

\noindent
   {\bf Claim.} \emph{For any $0<\la<\La_\eps$, \eqref{main_PDE} has a positive solution with negative energy}.

   Indeed, to prove the claim in view of Proposition \ref{B_r lemma}, we only need to investigate the case when $\la \in [\la^{\#}, \La_\eps)$. Let $\la^{\#}\le \la<\la'<\La_\eps$. Since $w_{\la,\eps}$ and $\ov{z}_{\la',\eps}$ are respectively subsolution and supersolution of \eqref{main_PDE} (see Lemma \ref{minimal_lemma}), we consider the following truncated function associated with $f_{\la}(t)=\la t^{q-1}+t^{\pst-1}$:
\begin{align*}
  \hat{f}_\la(x,t)=\begin{cases}
    \la \ov{z}_{\la',\eps}^{q-1}+\ov{z}_{\la',\eps}^{\pst-1},&t\geq \ov{z}_{\la',\eps}(x);\\
    \la t^{q-1}+t^{\pst-1},&w_{\la,\eps}(x)\leq t\leq \ov{z}_{\la',\eps}(x);\\
    \la w_{\la,\eps}^{q-1}+w_{\la,\eps}^{\pst-1},&t\leq w_{\la,\eps}(x).
\end{cases}
\end{align*}
Next, we consider the following problem
\begin{equation}\label{trunc}
    \delp u+\eps\frap u=\hat{f}_{\la}(x,u)\text{ in }\Om,\quad u\geq0\text{ in }\Om,\quad u=0\text{ in }\RR \setminus \Om.
\end{equation}
with the energy functional
$$\hat{I}_{\la,\eps}(u):=\frac1p\rho_{\eps}(u)^p-\int_{\Om}\hat{F}_\la(x,u(x))\dx,\quad \text{where }\hat{F}_\la(x,u(x))=\int_0^{u}\hat{f}_\la(x,t)\dt.$$
By \cite{AC}, $w_{\la,\eps},\, \ov{z}_{\la',\eps}\in C^1(\ov{\Om})$.
We first claim that $\hat{I}_{\la,\eps}$ is weakly lower semicontinuous and coercive on $X_0$. To check it, we suppose $u_n\rightharpoonup u$ in $X_0$. Thus, $u_n\to u$ in $L^1(\Om)$. Using $\hat{f}_\la\in L^{\infty}(\Om\times\R)$, we obtain
\begin{align*}
\int_{\Om}|\hat{F}_\la(x,u_n(x))-\hat{F}_\la(x,u(x))|\dx&\leq \int_{\Om}\left|\int_{u(x)}^{u_n(x)}\hat{f}_\la(x,t)\dt\right|\dx\\&\leq C\int_{\Om}|u_n-u|\dx=o_n(1).
\end{align*}
Since $\norm{\Gr \cdot}_p$ and $[\cdot]_{s,p}$ are weakly lower semicontinuous, we conclude that $\hat{I}_{\la,\eps}$ is weakly lower semicontinuous on $X_0$. Using $\hat{f}_{\la}\in L^{\infty}$ again, $|\hat{F}_{\la}(x,t)|\leq C|t|$ for some $C>0$ and all $t\in \R$. For coercivity, we observe that by Young's inequality and the Sobolev inequality, for any $u\in X_0$, for some $C_1,C_2>0$, $$\hat{I}_{\la,\eps}(u)\geq C_1\rho_\eps(u)^p-C_2.$$ Thus, $\hat{I}_{\la,\eps}$ achieves its global minimum at some point $\hat{z}_{\la,\eps}$. Thus $\hat{z}_{\la,\eps}$ solves \eqref{trunc} and again by \cite{AC}, $\hat{z}_{\la,\eps}\in C^1(\ov{\Om})$.

{\bf Subclaim}: We claim that  $w_{\la,\eps}\leq\hat{z}_{\la,\eps}\leq \ov{z}_{\la',\eps}$.

To see this, we proceed as follows: since $w_{\la,\eps}$ is a subsolution and $\hat{z}_{\la,\eps}$ is a solution of \eqref{trunc}, we have
\begin{align*}
    &\int_{\Om}|\nabla w_{\la,\eps}|^{p-2}\nabla w_{\la,\eps}\cdot\nabla(w_{\la,\eps}-\hat{z}_{\la,\eps})^+\dx+\eps\AA(w_{\la,\eps},(w_{\la,\eps}-\hat{z}_{\la,\eps})^+)\\
    &\leq \int_{\Om}f_\la(w_{\la,\eps})(w_{\la,\eps}-\hat{z}_{\la,\eps})^+\dx,\\
    &\int_{\Om}|\nabla \hat{z}_{\la,\eps}|^{p-2}\nabla \hat{z}_{\la,\eps}\cdot\nabla(w_{\la,\eps}-\hat{z}_{\la,\eps})^+\dx+\eps\AA(\hat{z}_{\la,\eps},(w_{\la,\eps}-\hat{z}_{\la,\eps})^+)\\
    &= \int_{\Om}f_\la(w_{\la,\eps})(w_{\la,\eps}-\hat{z}_{\la,\eps})^+\dx.
\end{align*}
In the right hand side of the second line, we used the fact that whenever $w_{\la,\eps}\geq \hat{z}_{\la,\eps}$, $\hat{f}_\la(x,\hat{z}_{\la,\eps})=f_\la(w_{\la,\eps})$. Subtracting both expressions,
\begin{equation}\label{caa1}
    \begin{split}
    \mathcal{B}+\eps \mathcal{D}:=\int_{\Om}(|\nabla w_{\la,\eps}|^{p-2}\nabla w_{\la,\eps}-|\nabla \hat{z}_{\la,\eps}|^{p-2}\nabla \hat{z}_{\la,\eps})\cdot\nabla(w_{\la,\eps}-\hat{z}_{\la,\eps})^+\dx\\+\eps\left(\AA(w_{\la,\eps},(w_{\la,\eps}-\hat{z}_{\la,\eps})^+)-\AA(\hat{z}_{\la,\eps},(w_{\la,\eps}-\hat{z}_{\la,\eps})^+)\right)\leq0.
\end{split}
\end{equation}
In view of the inequality \eqref{elemt.ineq_1}, we get
\begin{equation}\label{caa2}
    \mathcal{B}\gtrsim \int_{\Om}(|\nabla w_{\la,\eps}|+|\nabla\hat{z}_{\la,\eps}|)^{p-2}|\nabla(w_{\la,\eps}-\hat{z}_{\la,\eps})^+|^2\dx.
\end{equation}
To estimate $\mathcal{D}$, we set $\xi=w_{\la,\eps}(x)-w_{\la,\eps}(y)$, $\eta=\hat{z}_{\la,\eps}(x)-\hat{z}_{\la,\eps}(y)$ and $w=(w_{\la,\eps}-\hat{z}_{\la,\eps})^+$. Further, we notice that
\begin{equation}\label{daa}
    (f(x)-f(y))(f^+(x)-f^+(y))\geq |f^+(x)-f^+(y)|^2.
\end{equation}
Observe that, by \eqref{daa}, $f(x)-f(y)$ and $f^+(x)-f^+(y)$ always have same sign. Now, for $p <2$ we estimate
\begin{align*}
    &[(w_{\la,\eps}-\hat{z}_{\la,\eps})^+]_{s,p}^p= \iint\limits_{\R^{2N}}|(w(x)-w(y))^2|^{\frac p2}\frac{\dx\dy}{|x-y|^{N+sp}}\\
    &\leq\iint\limits_{\R^{2N}}|(\xi-\eta)(w(x)-w(y))|^{\frac p2}\frac{\dx\dy}{|x-y|^{N+sp}}\\
    &\lesssim \iint\limits_{\R^{2N}}|(|\xi|^{p-2}\xi-|\eta|^{p-2}\eta)(w(x)-w(y))(|\xi|+|\eta|)^{2-p}|^{\frac p2}\frac{\dx\dy}{|x-y|^{N+sp}}\\
    &\leq \left(\,\iint\limits_{\R^{2N}}\frac{|(|\xi|^{p-2}\xi-|\eta|^{p-2}\eta)(w(x)-w(y))|}{|x-y|^{N+sp}}\dx\dy\right)^{\frac p2} \\
    &\quad\left(\,\iint\limits_{\R^{2N}}\frac{(|\xi|+|\eta|)^{p}}{|x-y|^{N+sp}}{\dx\dy}\right)^{\frac {2-p}2}\\
    &\lesssim \mathcal{D}^{\frac p2} \left([w_{\la,\eps}]_{s,p}^p+[\hat{z}_{\la,\eps}]_{s,p}^p\right)^{\frac {2-p}2},
\end{align*}
where the first inequality follows from \eqref{daa}, the second inequality follows from \eqref{elemt.ineq_1} and applying the H\"{o}lder's inequality with exponents $(\frac 2p,\frac 2{2-p})$ yields the penultimate inequality. In the first term of the penultimate inequality, we see that $|\xi|^{p-2}\xi-|\eta|^{p-2}\eta$ and $w(x)-w(y)$ have the same sign, indeed as discussed above, $w(x)-w(y)$ has the same sign as $(w_{\la,\eps}-\hat{z}_{\la,\eps})(x)-(w_{\la,\eps}-\hat{z}_{\la,\eps})(y)$, which in turn yields that $|\xi|^{p-2}\xi-|\eta|^{p-2}\eta$ has the same sign as $w(x)-w(y)$. This allows us to remove modulus in the first term to get $\mathcal{D}$. Hence, for $p<2$, we have
\begin{equation}\label{caa3}
    \mathcal{D}\gtrsim \frac{[(w_{\la,\eps}-\hat{z}_{\la,\eps})^+]_{s,p}^2}{\left([w_{\la,\eps}]_{s,p}^p+[\hat{z}_{\la,\eps}]_{s,p}^p\right)^{\frac {2-p}p}}.
\end{equation}
For $p \ge 2$, using same arguments as in \cite[Proposition 5.12]{Biswas_Sk}, we get
\begin{align}\label{caa4}
    \mathcal{D} \gtrsim [(w_{\la,\eps}-\hat{z}_{\la,\eps})^+]_{s,p}^p.
\end{align}
Combining, \eqref{caa1}--\eqref{caa4}, we get that $w_{\la,\eps}\leq \hat{z}_{\la,\eps}$ a.e. in $\Om$. Similarly, one has $\hat{z}_{\la,\eps}\leq \ov{z}_{\la',\eps}$. This proves the subclaim.

Hence, by \eqref{trunc}, $\hat{z}_{\la,\eps}$ satisfies,
\begin{equation}
\begin{split}
    \delp \hat{z}_{\la,\eps}&+\eps\frap \hat{z}_{\la,\eps}=\la \hat{z}_{\la,\eps}^{q-1}+\hat{z}_{\la,\eps}^{\pst-1}\text{ in }\Om,\\
    &\hat{z}_{\la,\eps}\geq0\text{ in }\Om,\quad \hat{z}_{\la,\eps}=0\text{ in }\RR \setminus \Om.
    \end{split}
\end{equation}
That means $\hat{z}_{\la,\eps}$ is a positive solution of \eqref{main_PDE}. Moreover, as $\hat{z}_{\la,\eps}$ is a global minimizer of $\hat{I}_{\la,\eps}$, $$I_{\la,\eps}(\hat{z}_{\la,\eps})=\hat{I}_{\la,\eps}(\hat{z}_{\la,\eps})\leq \hat{I}_{\la,\eps}(w_{\la,\eps})<J_{\la,\eps}(w_{\la,\eps})<0,$$
where $J_{\la,\eps}$ is as defined in \eqref{J-la-eps}. Hence, the claim is proved.

Now we let $\la_n\uparrow\La_\eps$ as $n \ra \infty$. Therefore, for each $n \in \mathbb{N}$, we have
\begin{equation}\label{limit-eqn}\tag{$\PP_{\la_n,\eps}$}
\begin{split}
    \delp \hat{z}_{\la_n,\eps}+ & \eps\frap \hat{z}_{\la_n,\eps}=\la \hat{z}_{\la_n,\eps}^{q-1}+\hat{z}_{\la_n,\eps}^{\pst-1}\text{ in }\Om, \\ & \hat{z}_{\la_n,\eps}\geq0\text{ in }\Om, \quad \hat{z}_{\la_n,\eps}=0\text{ in }\RR \setminus \Om.
\end{split}
\end{equation}
 where
 \begin{align*}
    &I_{\la_n,\eps}(\hat{z}_{\la_n,\eps})=\frac1p\rho_{\eps}(\hat{z}_{\la_n,\eps})^p-\frac{\la_n}q\|\hat{z}_{\la_n,\eps}\|_q^q-\frac1{\pst}\|\hat{z}_{\la_n,\eps}\|_{\pst}^{\pst}<0, \text{ and }\\
    &\langle I_{\la_n,\eps}'(\hat{z}_{\la_n,\eps}),\phi\rangle=0,\quad \forall \,\phi\in X_0.
    \end{align*}
    Observe that
    \begin{align*}
    &\limsup_{n\to\infty}I_{\La_\eps,\eps}(\hat{z}_{\la_n,\eps})=\limsup_{n\to\infty}I_{\la_n,\eps}(\hat{z}_{\la_n,\eps})\leq0,\\
    &\lim_{n\to\infty}\langle I_{\La_\eps,\eps}'(\hat{z}_{\la_n,\eps}),\phi\rangle=\lim_{n\to\infty}\langle I_{\la_n,\eps}'(\hat{z}_{\la_n,\eps}),\phi\rangle=0.
    \end{align*}
    Thus (up to a subsequence) $\{\hat{z}_{\la_n,\eps}\}$ is a PS sequence of $I_{\La_\eps,\eps}$. By Lemma \ref{converg_limi}, there exists $\hat{z}_{\La_\eps,\eps}\in X_0$ such that $\hat{z}_{\la_n,\eps}\rightharpoonup \hat{z}_{\La_\eps,\eps}$ in $X_0$, $\hat{z}_{\la_n,\eps}\to \hat{z}_{\La_\eps,\eps}$ a.e. in $\Om$ and $\hat{z}_{\La_\eps,\eps}$ solves ($\PP_{\La_\eps,\eps}$). Since $\hat{z}_{\la_n,\eps}\geq w_{\la_n,\eps}$ for every $n$ and  $\la_n$ is increasing, by Lemma \ref{comparison-1}, $\hat{z}_{\la_n,\eps}\geq w_{\la_n,\eps}\geq w_{\la_1,\eps}$ for every $n$, and hence $\hat{z}_{\La_\eps,\eps}\geq w_{\la_1,\eps}>0$ a.e. in $\Om$. This completes the proof.
\end{proof}
\begin{proof}[{\bf Proof of Theorem \ref{ABC}}]
    Combining Proposition \ref{nonexistence} and Lemmas \ref{Laepsinfty}--\ref{Laeps}, proof of Theorem \ref{ABC} follows.
\end{proof}

\section{Multiplicity of positive solutions}\label{sec_multiplicity}

In this section, we prove the multiplicity of positive solutions to \eqref{main_PDE} under a certain range of $q$. First, we discuss the regularity of weak solutions of \eqref{main_PDE}.
\begin{proposition}\label{regular}
    Let $p\in(1, \infty)$, $q\in(1,p)$, $\eps\in(0,1]$, and $\la_0>0$ be arbitrarily chosen and fixed. Let $u\in X_0$ be a weak solution of \eqref{main_PDE} ($u$ implicitly depends on both $\la$ and $\eps$)  and let $0<\la\leq\la_0$. Then
    \begin{enumerate}
        \item[{\rm (a)}] It holds
        \begin{equation}\label{e5.02}
            \|u\|_{L^{\infty}(\Om)}\lesssim_{N,s,\la_0,\Om} \left(1+\int_{\Om}|u|^{\pst\beta_1}\dx\right)^{\frac1{\pst(\beta_1-1)}},
        \end{equation}
        where $\beta_1=\frac{\pst+p-1}{p}$.
        \item[{\rm (b)}] There exists $r_0\in(0,1)$, independent of $\eps$, such that if $\rho_{\eps}(u)\leq r_0$, then $$\|u\|_{L^{\infty}(\Om)}\leq C({N,s,\la_0,\Om}).$$
        \item[{\rm (c)}] As a consequence of {\rm (b)}, there exists $\alpha=\alpha(N,p,s)\in(0,1)$ such that $\nabla u\in C_{\text{loc}}^{0,\alpha}(\Om)$ and for every $\Om_0\Subset\Om$, $$\|\nabla u\|_{C^{0,\alpha}(\Om_0)}\leq C(N,s,\la_0,\Om).$$
    \end{enumerate}
\end{proposition}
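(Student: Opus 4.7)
The plan is a Moser-type iteration adapted to the mixed local-nonlocal operator, followed by an appeal to the interior $C^{1,\al}$ theory of \cite{AC}.

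For part (a), I would test the weak formulation \eqref{weak} with $\varphi_M := u\,\min\{|u|,M\}^{p(\beta-1)}$ for $\beta \geq 1$ and truncation level $M \geq 1$; this $\varphi_M$ lies in $X_0 \cap L^\infty(\Om)$ and is a non-decreasing function of $u$, so by the pointwise inequality $|u(x)-u(y)|^{p-2}(u(x)-u(y))(\varphi_M(x)-\varphi_M(y))\geq 0$ the nonlocal contribution $\AA(u,\varphi_M)$ is nonnegative and can be discarded from the left-hand side. Combining the chain-rule bound $|\nabla(u\min\{|u|,M\}^{\beta-1})|^p \leq \beta^p \min\{|u|,M\}^{p(\beta-1)}|\nabla u|^p$ with the Sobolev inequality yields
\begin{equation*}
    S_0\,\beta^{-p}\, \|u\min\{|u|,M\}^{\beta-1}\|_{\pst}^p \;\lesssim_{\Om}\; \la \int_\Om |u|^{q+p(\beta-1)}\dx + \int_\Om |u|^{\pst+p(\beta-1)}\dx.
\end{equation*}
The critical term is split via H\"older: on $A_L := \{|u|>L\}$ one has $\int_{A_L}|u|^{\pst+p(\beta-1)}\leq \|u\|_{L^{\pst}(A_L)}^{\pst-p}\,\|u\min\{|u|,M\}^{\beta-1}\|_{\pst}^p$, and by absolute continuity of the Lebesgue integral one can pick $L=L(\beta)$ large enough so that this factor is absorbed into the left-hand side, while on $\{|u|\leq L\}$ the integrand is controlled in terms of $L$ and the $L^{\pst\beta}$ norm. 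Choosing $\beta=\beta_1$ (note $p(\beta_1-1)+\pst = \pst\beta_1$) and letting $M\to\infty$ by monotone convergence produces the asserted estimate \eqref{e5.02}.

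For part (b), the hypothesis $\rho_\eps(u)\leq r_0$ and Sobolev give $\|u\|_{\pst}\leq S_0^{-1/p}r_0$; if $r_0$ is fixed small enough (independently of $\eps$) so that $\|u\|_{\pst}^{\pst-p}<\tfrac12 S_0\beta_1^{-p}$, then the critical term in the Moser step above is absorbed globally without passing to any tail set, yielding a bound on $\|u\|_{\pst\beta_1}$ depending only on $r_0$. Inserting this into \eqref{e5.02} delivers the $\eps$-independent $L^\infty$ bound. For (c), once $u$ is uniformly bounded, the right-hand side of \eqref{main_PDE} is dominated by a constant $C(N,s,\la_0,\Om)$, so the interior $C^{1,\al}$ regularity for mixed local-nonlocal $p$-Laplace equations with bounded source from \cite{AC} applies and gives $\nabla u\in C^{0,\al}_{\mathrm{loc}}(\Om)$ with an estimate depending only on $\|u\|_\infty$ and structural constants.

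The main obstacle is the critical growth $|u|^{\pst-2}u$: processed against the Sobolev inequality via H\"older it produces a term of precisely the same strength as the left-hand side of the Moser step, with coefficient comparable to $\|u\|_{\pst}^{\pst-p}$, so absorption is possible only when this quantity is small. This is exactly what forces the smallness hypothesis $\rho_\eps(u)\leq r_0$ in (b), and the whole iteration must be carried out with constants independent of $\eps$. The monotonicity observation that kills $\AA(u,\varphi_M)$ as a positive contribution is crucial here, since invoking a fractional Sobolev-type lower bound on $\varphi_M$ would unavoidably introduce $\eps$-dependent constants into the iteration, breaking the uniformity required by part (b).
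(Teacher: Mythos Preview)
Your overall strategy---test with $\varphi_M=u\min\{|u|,M\}^{p(\beta-1)}$, discard the nonlocal term as a nonnegative contribution, apply the classical Sobolev inequality, and absorb the critical piece via a tail splitting---is exactly the route the paper takes. The paper uses the equivalent test function $\phi(u)\phi'(u)|\phi'(u)|^{p-2}$ with $\phi$ a $C^{1,1}$ truncation of $|t|^\beta$, which for $|u|\le T$ is $\beta^{p-1}u|u|^{p\beta-p}$, i.e.\ your $\varphi_M$ up to a constant.

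There is, however, a genuine gap in your write-up of (a). The step you describe---set $\beta=\beta_1$ and absorb---only shows $u\in L^{\pst\beta_1}(\Om)$; it does \emph{not} produce the $L^\infty$ estimate \eqref{e5.02}. Passing to $L^\infty$ requires the full iteration $\beta_m\to\infty$ with $\pst+p\beta_{m+1}-p=\pst\beta_m$, and here the absorption trick is the wrong tool: your level $L=L(\beta)$ is chosen by absolute continuity of $\int|u|^{\pst}$, hence depends on the particular $u$, and iterating would propagate $u$-dependent constants into \eqref{e5.02}, contradicting the stated dependence $\lesssim_{N,s,\la_0,\Om}$. The paper separates the two issues: the absorption is done \emph{only} at $\beta=\beta_1$ (to get $A_1$ finite), while for $\beta>\beta_1$ one uses Young's inequality on the subcritical term to obtain the clean recursion $A_{m+1}\le (C\beta_{m+1}^{p-1})^{1/(p(\beta_{m+1}-1))}A_m$, whose product converges to a constant independent of $u$. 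Your sentence ``controlled in terms of $L$ and the $L^{\pst\beta}$ norm'' does not yield a usable recursion either, since $\pst+p(\beta-1)>p\beta$.

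Two smaller points. For (b) you impose smallness on $r_0$ so that $\|u\|_{\pst}^{\pst-p}<\tfrac12 S_0\beta_1^{-p}$ and the absorption is global; this is in fact cleaner than the paper's treatment (which reuses the $R$ from \eqref{e5.7} without checking its $\eps$-independence), but note the statement as written only says $r_0\in(0,1)$. For (c) the paper invokes \cite[Theorem~5]{DM} rather than \cite{AC}, precisely because the estimates in \cite{DM} are stated with explicit dependence on the kernel bounds $\mathtt{k}\le K(x,y)|x-y|^{N+sp}\le\Lambda$, so taking $\mathtt{k}=\eps$, $\Lambda=1$ gives constants uniform in $\eps\in(0,1]$; you should verify that the result you cite from \cite{AC} provides the same uniformity.
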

\begin{proof}
(a) We follow the approach of \cite[Theorem 1.1]{SVWZ}. For $\beta>1$ and $T>1$, we define
    \begin{equation}\label{e5.1}
        \phi(t):=\begin{cases}
            -\beta T^{\beta-1}(t+T)+T^\beta,\quad &t\leq-T;\\
            |t|^\beta,\quad &|t|\leq T;\\
            \beta T^{\beta-1}(t-T)+T^\beta,\quad &t\geq T.
        \end{cases}
    \end{equation}
We calculate, in the almost everywhere sense,
\begin{align*}
  &\phi'(t)=\begin{cases}
        -\beta T^{\beta-1},&t\leq-T;\\
        -\beta(-t)^{\beta-1},&-T\leq t\leq 0 ;\\
        \beta t^{\beta-1}, & 0 \leq t \leq T;\\
        \beta T^{\beta-1},&t\geq T,
    \end{cases}\; \text{ and } \\ & \phi''(t)=\begin{cases}
        \beta(\beta-1)t^{\beta-2},&0<t<T;\\
        \beta(\beta-1)(-t)^{\beta-2},&-T<t<0;\\
        0,&|t|> T.
    \end{cases}
\end{align*}
Notice that $\phi(t)\leq|t|^\beta$ for all $t \in \R$. By the definition of $\phi$, it is enough to see this inequality for $|t|\geq T$. If $|t|\geq T$, then $|t|= cT$ for some $c\geq1$ and we have
\begin{align*}
 \phi(t)=\begin{cases}\beta T^{\beta-1}(cT-T)+T^\beta, &\mbox{  for  }\, t\geq T;\\
-\beta T^{\beta-1}(-cT+T)+T^\beta, &\mbox{  for  }\, t\leq-T. \end{cases}
\end{align*}
Then using the inequality $\beta(c-1)+1\leq c^\beta$ with $\beta>1$ and $c\geq1$,  $\phi(t)=(\beta(c-1)+1)T^\beta\leq c^\beta T^\beta=|t|^\beta.$ Further, we observe
    \begin{enumerate}
        \item \label{phi(a)}$|\phi'(t)|\leq \beta |t|^{\beta-1}$ and $t\phi'(t)\leq\beta\phi(t)$, for all $t \in \R$.
        \item\label{phi(b)} $\phi$ is a convex and Lipschitz function with the Lipschitz constant $\beta T^{\beta-1}$.
    \end{enumerate}
Since $\phi$ is Lipschitz with Lipschitz constant $\be T^{\be-1}$, for any $u\in X_0$,
\begin{align*}
    \rho_\eps(\phi(u))^p=\|\nabla \phi(u)\|_p^p+\eps[\phi(u)]_{s,p}^p\leq (\be T^{\be-1})^p\left(\|\nabla u\|_p^p+\eps[u]_{s,p}^p\right)<\infty.
\end{align*}
Thus, $\phi(u)\in X_0$. Using the convexity of $\phi$, \cite[Lemma~2.8]{BPV},
\begin{equation}\label{e5.2}
\frap\phi(u)\leq|\phi'(u)|^{p-2}\phi'(u)\frap u,\;\text{ a.e. in }\Om.
\end{equation}
Using the Sobolev embedding $W_0^{s,p}(\Om)\hookrightarrow L^{\psst}(\Om)$, $\psst=\frac{Np}{N-sp}$ and the following identity from \cite[Proposition~2.10]{BPV}: $$\AA(v,u)=2\int_{\Om}u\frap v\dx, \; \forall \, u,v \in W_0^{s,p}(\Om),$$ we estimate
\begin{equation}\label{e5.3}
\begin{aligned}
\|\phi(u)\|_{\psst}^p&\leq C[\phi(u)]_{s,p}^p=2C\int_{\Om}\phi(u)\frap\phi(u)\dx\\
&\leq2C\int_{\Om}|\phi'(u)|^{p-2}\phi'(u)\phi(u)\frap u\dx.
\end{aligned}
\end{equation}
Also, notice that
\begin{equation}\label{e5.4}
\begin{aligned}
&\int_{\Om}|\nabla u|^{p-2}\nabla u\cdot\nabla(\phi(u)\phi'(u)|\phi'(u)|^{p-2})\\
&=\int_{\Om}|\nabla u|^p(|\phi'(u)|^p+\phi(u)\phi''(u)|\phi'(u)|^{p-2})\\
&\quad+\int_{\Om}|\nabla u|^p(p-2)|\phi'(u)|^{p-2}\phi(u)\phi''(u)\\
&\geq\int_{\Om} |\nabla u|^p|\phi'(u)|^p\dx.
\end{aligned}
\end{equation}
Now taking $v=\phi(u)\phi'(u)|\phi'(u)|^{p-2}$ as a test function in \eqref{weak} and combining \eqref{e5.3}-\eqref{e5.4}, we get
\begin{equation}\label{e5.41}
    \int_{\Om} |\nabla u|^p|\phi'(u)|^p\dx\leq \int_{\Om}|\phi'(u)|^{p-2}\phi'(u)\phi(u)(\la|u|^{q-2}u+|u|^{\pst-2}u)\dx.
\end{equation}
Using \eqref{e5.41} and and the classical Sobolev inequality,
\begin{align*}
\|\phi(u)\|_{\pst}^p&\leq\frac1{S_0}\|\nabla(\phi(u))\|_p^p=\frac1{S_0}\int_{\Om} |\nabla u|^p|\phi'(u)|^p\dx\\
&\leq\frac1{S_0}\int_{\Om}|\phi'(u)|^{p-2}\phi'(u)\phi(u)\left(\la|u|^{q-2}u+|u|^{\pst-2}u\right)\dx\\
&\leq \frac{\la+1}{S_0}\int_{\Om}|\phi'(u)|^{p-1}\phi(u)\left(1+|u|^{\pst-1}\right)\dx.
\end{align*}
Using the facts $\phi(u)\leq|u|^\beta$, $|\phi'(u)|\leq\beta|u|^{\beta-1}$ and $u\phi'(u)\leq\beta\phi(u)$, we see that
\begin{equation}\label{e5.5}
\|\phi(u)\|_{\pst}^p\leq C_0\beta^{p-1}\int_{\Om}\bigg(|u|^{p\beta-p+1}+(\phi(u))^p|u|^{\pst-p}\bigg)\dx,
\end{equation}
where $C_0=\frac{\la_0+1}{S_0}$.
We now choose $\beta$ in \eqref{e5.5} to be $\beta_1:=\frac{\pst+p-1}{p}$. Let $R>0$ to be fixed later. Using $\phi(u)\leq|u|^{\beta_1}$, we get
\begin{align}
&\int_{\Om}(\phi(u))^p|u|^{\pst-p}\dx \no\\
&\leq\int_{\{|u|\leq R\}}\frac{(\phi(u))^p}{|u|^{p-1}}R^{\pst-1}\dx+\left(\int_{\{|u|\geq R\}}(\phi(u))^{\pst}\right)^{\frac{p}{\pst}}\left(\int_{\{|u|\geq R\}}|u|^{\pst}\dx\right)^{\frac{\pst-p}{\pst}}\nonumber\\
&\leq R^{\pst-1}\int_{\Om} |u|^{\pst}\dx+\left(\int_{\Om}(\phi(u))^{\pst}\dx\right)^{\frac{p}{\pst}}\left(\int_{\{|u|\geq R\}}|u|^{\pst}\dx\right)^{\frac{\pst-p}{\pst}}.\label{e5.6}
\end{align}
By the Monotone Convergence Theorem, we  choose $R$ large enough so that
\begin{equation}\label{e5.7}
\left(\int_{\{|u|\geq R\}}|u|^{\pst}\right)^{\frac{\pst-p}{\pst}}\leq\frac{1}{2C_0\beta_1^{p-1}}.
\end{equation}
Thus using \eqref{e5.6} and \eqref{e5.7} in \eqref{e5.5}, we obtain
\begin{equation}\label{e5.7.1}
\|\phi(u)\|_{\pst}^p\leq 2C_0\beta_1^{p-1}(R^{\pst-1}+1)\|u\|_{\pst}^{\pst}.
\end{equation}
Taking $T\to\infty$, \eqref{e5.7.1} yields $u\in L^{\pst\beta_1}(\Om)$. Suppose $\beta>\beta_1$. Using $\phi(u)\leq |u|^\beta$ in the right hand side of \eqref{e5.5} and taking $T\to\infty$, we have
$$\left(\int_{\Om}|u|^{\pst\beta}\dx\right)^{\frac{p}{\pst}}\leq C_0\beta^{p-1}\int_{\Om} \left( |u|^{p\beta-p+1}+|u|^{\pst+\beta p-p} \right)\dx.$$
Applying Young's inequality with conjugates $\frac{b}{a}$ and $\frac{b}{b-a}$, where $a=p\beta-p+1$ and $b=\pst+\beta p-p$, in the first integral of right hand side, we get
\begin{align*}
\left(\int_{\Om}|u|^{\pst\beta}\dx\right)^{\frac{p}{\pst}}&\leq C_0\beta^{p-1}\left(\left(\frac{a}{b}\int_{\Om}|u|^{\pst+\beta p-p}+\frac{b-a}{b}|\Om|\right)+\int_{\Om}|u|^{\pst+\beta p-p}\dx\right)\\
&\leq 2C_0\beta^{p-1}(|\Om|+1)\left(1+\int_{\Om}|u|^{\pst+\beta p-p}\dx\right).
\end{align*}
Using the formula $(a+b)^p\leq 2^{p-1}(a^p+b^p)$, we see that    $$\left(1+\int_{\Om}|u|^{\pst\beta}\dx\right)^p\leq2^{p-1}+2^{p-1}\left(2C_0\beta^{p-1}(|\Om|+1)\left(1+\int_{\Om}|u|^{\pst+\beta p-p}\dx\right)\right)^{\pst}.$$
Thus,
\begin{equation}\label{e5.8}
    \left(1+\int_{\Om}|u|^{\pst\beta}\dx\right)^{\frac{1}{\pst(\beta-1)}}\leq (C\beta^{p-1})^{\frac{1}{p(\beta-1)}}\left(1+\int_{\Om}|u|^{\pst+\beta p-p}\dx\right)^{\frac{1}{p(\beta-1)}}.
\end{equation}
where $C$ is dependent only on $N,s,\la_0,\Om$ and independent of $\beta$.
For $m\geq1$, define $\beta_{m+1}$ such that $$\pst+p\beta_{m+1}-p=\pst\beta_m.$$
Thus $$\beta_{m+1}-1=\left(\frac{\pst}{p}\right)^m(\beta_1-1).$$
Define $C_{m}:=C\beta_{m}^{p-1}$ and
$A_{m}:=\left(1+\int_{\Om}|u|^{\pst\beta_{m}}\dx\right)^{\frac{1}{\pst(\beta_{m}-1)}},$
so that \eqref{e5.8} becomes
\begin{align}\label{interpol-1}
    A_{m+1}\leq C_{m+1}^{\frac{1}{p(\beta_{m+1}-1)}}A_m\leq\prod_{k=2}^{m+1}C_{k}^{\frac{1}{p(\beta_{k}-1)}}A_1.
\end{align}
Notice that $\beta_{m+1}=\left(\frac{\pst}{p}\right)^{m}(\beta_1-1)+1\leq 2\left(\frac{\pst}{p}\right)^{m+1}$. In general, $C_k=C\beta_k^{p-1}\leq 2^{p-1}C\left(\frac{\pst}{p}\right)^{k(p-1)}$. Further,
\begin{align*}
        \prod_{k=2}^{m+1}C_{k}^{\frac{1}{p(\beta_{k}-1)}}&\leq\prod_{k=2}^{m+1}\left(2^{p-1}C\left(\frac{\pst}{p}\right)^{k(p-1)}\right)^{\frac{\left(\tfrac{p}{\pst}\right)^{k-1}}{p(\beta_1-1)}}\\
        &=\left(2^{p-1}C\right)^{\frac1{p(\beta_1-1)}\sum_{k=2}^{m+1}\left(\frac{p}{\pst}\right)^{k-1}}\left(\frac{\pst}{p}\right)^{\frac{p-1}{p(\beta_1-1)}\sum_{k=2}^{m+1}k\left(\frac{p}{\pst}\right)^{k-1}}\\
        &\leq \left(2^{p-1}C\right)^{\frac1{p(\beta_1-1)}\sum_{k=2}^{\infty}\left(\frac{p}{\pst}\right)^{k-1}}\left(\frac{\pst}{p}\right)^{\frac{p-1}{p(\beta_1-1)}\sum_{k=2}^{\infty}k\left(\frac{p}{\pst}\right)^{k-1}}.
\end{align*}
Since $\tfrac{p}{\pst}<1$, both series are convergent and thus $A_m\leq C A_1,$ where $C$ is independent of $m$. Finally, we take the limit as $m \ra \infty$ in \eqref{interpol-1} and use interpolation to get \eqref{e5.02}.

\noi (b) To show (b), we return to \eqref{e5.6}: choosing $R=1$, using the Sobolev inequality and the fact that $\|\nabla u\|_{p}\leq \rho_{\eps}(u)\leq r_0$, we get
\begin{align*}
\int_{\Om}(\phi(u))^p|u|^{p^*-p}\dx&\leq \int_{\Om}|u|^{\pst}\dx+\left(\int_{\Om}(\phi(u))^{\pst}\dx\right)^{\frac{p}{\pst}}\left(\int_{\{|u|\geq 1\}}|u|^{\pst}\dx\right)^{\frac{\pst-p}{\pst}}\\
&\leq \int_{\Om}|u|^{\pst}\dx+\left(\int_{\Om}(\phi(u))^{\pst}\dx\right)^{\frac{p}{\pst}}\left(\int_{\Om}|u|^{\pst}\dx\right)^{\frac{\pst-p}{\pst}}\\
&\leq \int_{\Om}|u|^{\pst}\dx+\left(\int_{\Om}(\phi(u))^{\pst}\dx\right)^{\frac{p}{\pst}}S_0^{\frac{p-p^*}{p}}r_0^{p^*-p}
\end{align*}
Choosing $r_0>0$ such that
\begin{equation*}
S_0^{\frac{p-p^*}{p}}r_0^{p^*-p}\leq \frac{1}{2C_0\be_1^{p-1}}.
\end{equation*}
Proceeding as before and taking $T\to\infty$, we see that $A_1\leq C$ is independent of $\eps$. Hence, $\norm{u}_{\infty}\leq C$ where $C$ is independent of $\eps$.\\
\noi (c) As a consequence of (b), for any $d>N$, $L^d(\Om)$ and $L^N(\Om)$ norms of $\la |u|^{q-2}u+|u|^{\pst-2}u$ are uniformly bounded w.r.t. $\eps$. Now, (c) is just an application of \cite[Theorem 5]{DM} with the kernel $$K(x,y)=\frac{\eps}{|x-y|^{N+sp}},$$ (choosing $\texttt{k}=\eps$ and $\La=1$ in (1.7) of \cite{DM}).
\end{proof}

\begin{remark}\label{lower_remark}
Since in Proposition \ref{regular}, we have chosen $\la_0$ arbitrarily and then fixed it, in view of  Proposition \ref{B_r lemma}, we can choose $\la_0=\la^\#$  and $r_0>0$ so that for every $0<\la<\la^\#$ we have $u_{\la,\eps}$ lies in the interior of $B_{r_0}$ and  $\|u_{\la,\eps}\|_{\infty}\leq C$ where $C$ is independent of $\eps$.
\end{remark}

The following lemma will be useful for the existence of the second positive solution.
\begin{lemma}\label{lower_bound}
Let $p\in(1, \infty)$, $q\in(1,p)$, and $\eps\in(0,1]$. Let $\la>0$ be fixed and $w_{\la,\eps}$ be the unique positive solution of \eqref{sublinear}. Then there exists $\eps_0>0$ such that given any $B_R(x_0)\subset\Om$ and $0<r\leq\min\{1,R\}$, for every $0<\eps<\eps_0$,
        \begin{equation}
            w_{\la,\eps}\geq C, \text{ in } B_r(x_0),
        \end{equation}
where $C>0$ is independent of $\eps$.
\end{lemma}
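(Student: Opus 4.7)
The plan is to show that, as $\eps\downarrow 0$, the family $\{w_{\la,\eps}\}$ converges (uniformly on compact subsets of $\Om$) to the unique positive solution $w_{\la,0}\in C^1(\overline{\Om})$ of the purely local sublinear problem $\delp w=\la w^{q-1}$ in $\Om$, $w=0$ on $\pa\Om$. Once this is in place, the conclusion follows at once from the positivity of $w_{\la,0}$ on the compact set $\overline{B_R(x_0)}\subset\Om$ (by the strong maximum principle for $\delp$).

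I would first establish a uniform bound on $w_{\la,\eps}$ in $X_0$, independent of $\eps$. Testing \eqref{sublinear} with $w_{\la,\eps}$ and using the Sobolev embedding yields
\begin{equation*}
\rho_\eps(w_{\la,\eps})^p=\la\|w_{\la,\eps}\|_q^q\lesssim_\la \rho_\eps(w_{\la,\eps})^q,
\end{equation*}
and since $q<p$ we obtain $\rho_\eps(w_{\la,\eps})\leq r_0$ for some $r_0$ independent of $\eps$. Consequently, the Moser-type iteration in Proposition \ref{regular}(b) (which is in fact simpler here, since only the sublinear term $\la w^{q-1}$ is present on the right) gives $\|w_{\la,\eps}\|_\infty\leq C$ uniformly in $\eps$, and then Proposition \ref{regular}(c) furnishes a uniform $C^{1,\alpha}_{\mathrm{loc}}(\Om)$ bound on $w_{\la,\eps}$.

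Next I would identify the weak limit. Along a subsequence $\eps_k\downarrow 0$, we have $w_{\la,\eps_k}\rightharpoonup w_0$ in $W^{1,p}_0(\Om)$, and by Arzel\`a-Ascoli applied to the $C^{1,\alpha}_{\mathrm{loc}}$ family, $w_{\la,\eps_k}\to w_0$ uniformly on $\overline{B_R(x_0)}$. To pass to the limit in the weak formulation, I would first note that for any $\phi\in C_c^\infty(\Om)$,
\begin{equation*}
|\eps\AA(w_{\la,\eps},\phi)|\leq \eps\,[w_{\la,\eps}]_{s,p}^{p-1}[\phi]_{s,p}\leq C\,\eps^{1/p}[\phi]_{s,p}\to 0,
\end{equation*}
since $\eps[w_{\la,\eps}]_{s,p}^p\leq\rho_\eps(w_{\la,\eps})^p\leq C$. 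For the $\delp$ term I would use the variational characterization: since $w_{\la,\eps}$ globally minimizes $J_{\la,\eps}$ on $X_0$, and $w_{\la,0}\in W^{1,p}_0(\Om)\subset X_0$ is admissible,
\begin{equation*}
J_{\la,\eps}(w_{\la,\eps})\leq J_{\la,\eps}(w_{\la,0})=J_{\la,0}(w_{\la,0})+\tfrac{\eps}{p}[w_{\la,0}]_{s,p}^p.
\end{equation*}
Taking $\eps_k\downarrow 0$, combined with weak lower semicontinuity of $\|\nabla\cdot\|_p$ and strong $L^q$ convergence, gives $J_{\la,0}(w_0)\leq J_{\la,0}(w_{\la,0})<0$. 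Hence $w_0\geq 0$ is nontrivial, and by uniqueness of the positive solution of the local problem (Lemma \ref{comparison-1} applied with $\eps=0$), $w_0=w_{\la,0}$. Since the limit is independent of the subsequence, the whole family converges.

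Finally, since $w_{\la,0}\in C^1(\overline{\Om})$ is strictly positive in $\Om$, there exists $\delta>0$ with $w_{\la,0}\geq 2\delta$ on the compact set $\overline{B_R(x_0)}$. The uniform convergence $w_{\la,\eps}\to w_{\la,0}$ on $\overline{B_R(x_0)}$ then yields $\eps_0>0$ such that $w_{\la,\eps}\geq \delta$ on $\overline{B_R(x_0)}\supset B_r(x_0)$ for every $\eps\in(0,\eps_0)$, proving the lemma with $C:=\delta$. The main difficulty is the derivation of the uniform-in-$\eps$ $C^{1,\alpha}_{\mathrm{loc}}$ estimate needed to upgrade weak convergence to uniform convergence; this is precisely the content of Proposition \ref{regular}(c), which applies here because the bound $\rho_\eps(w_{\la,\eps})\leq r_0$ with $r_0$ independent of $\eps$ was already established in the first step.
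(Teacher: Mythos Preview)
Your proof is correct and shares with the paper the two essential ingredients: the uniform bound $\rho_\eps(w_{\la,\eps})\le C_\la$ obtained by testing the equation with $w_{\la,\eps}$, and the variational identification of the limit as the unique positive solution $w_{\la,0}$ of the purely local problem via $J_{\la,0}(w_0)\le\liminf J_{\la,\eps}(w_{\la,\eps})\le J_{\la,0}(\phi)$. The difference lies in how you pass from convergence to the pointwise lower bound. The paper does \emph{not} invoke $C^{1,\alpha}_{\mathrm{loc}}$ regularity; instead it observes that $w_{\la,\eps}$ is a nonnegative weak supersolution of $\delp u+\eps\frap u=0$ and applies the weak Harnack inequality of \cite[Theorem~3.5]{PG} (whose constant is $\eps$-independent) to get $\inf_{B_r(x_0)} w_{\la,\eps}\gtrsim \bigl(\fint_{B_r(x_0)} w_{\la,\eps}^Q\bigr)^{1/Q}$, and then uses only the $L^Q$ convergence $w_{\la,\eps}\to w_{\la,0}$ (from Rellich) to conclude. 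Your route through Proposition~\ref{regular}(c) and Arzel\`a--Ascoli is heavier in that it imports the De~Filippis--Mingione gradient regularity \cite{DM}, but it delivers more: genuine uniform convergence on compact subsets rather than just a lower bound. The paper's approach is lighter and self-contained once one accepts the $\eps$-uniform Harnack constant; yours is arguably more robust and yields a stronger intermediate statement. One small remark: Proposition~\ref{regular} is phrased for solutions of \eqref{main_PDE}, so you are implicitly (and correctly) noting that the Moser iteration and the gradient estimate carry over verbatim to \eqref{sublinear}, where the right-hand side is even milder.
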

\begin{proof}
Since $w_{\la,\eps}$ is a weak solution of \eqref{sublinear}, by Sobolev inequality,
$$\rho_\eps(w_{\la,\eps})^p=\la\|w_{\la,\eps}\|_q^q\leq \la C(N,s,q)\|\nabla w_{\la,\eps}\|_p^q\leq \la C(N,s,q)\rho_\eps(w_{\la,\eps})^q.$$
Thus, using $q<p$, $w_{\la,\eps}$ is bounded in $X_0$. Hence, up to a subsequence, there exists $w_\la\in X_0$ such that as $\eps\to0$, $w_{\la,\eps}\rightharpoonup w_\la$ in $X_0$ and $w_{\la,\eps}\to w_\la$ in $L^t(\Om)$ for every $1\leq t<\pst$. For $0<t<1$, by applying H\"{o}lder's inequality with the exponents $(\frac1t,\frac1{1-t})$, we get $w_{\la,\eps}\to w_\la$ in $L^t(\Om)$ as well. Let $J_{\la,0}$ be the functional associated with the purely local problem
\begin{equation}\label{loc_sub}
    \delp u=\la|u|^{q-2}u\text{ in }\Omega,\quad u>0 \text{ in }\Omega,\quad
    u=0\text{ in }\RR\setminus\Omega.
\end{equation}
Then, using the act that $w_{\la,\eps}$ is a global minimizer of $J_{\la,\eps}$, for any $\phi\in X_0$,
\begin{align*}
    J_{\la,0}(w_\la)=\frac1p\|\nabla w_\la\|_p^p-\frac\la q\|w_\la\|_q^q\leq \liminf_{\eps\to0}J_{\la,\eps}(w_{\la,\eps})\leq \liminf_{\eps\to0}J_{\la,\eps}(\phi)=J_{\la,0}(\phi).
\end{align*}
Thus $w_{\la}$ is a global minimizer for $J_{\la,0}$. By \cite{DS}, $J_{\la,0}$ has a unique global minimizer which is also the unique solution of \eqref{loc_sub}. Thus $w_{\la}$ is the unique solution of \eqref{loc_sub}. Further, since $w_{\la,\eps}$ is a weak solution of \eqref{sublinear}, it is also a weak supersolution of the problem
\begin{equation*}
    \delp u+\eps\frap u=0\text{ in }\Omega,\quad
    u=0\text{ in }\RR\setminus\Omega.
\end{equation*}
Thus by the weak Harnack inequality \cite[Theorem 3.5]{PG} and H\"{o}lder's inequality, for any $x\in B_r(x_0)$, some $0<Q<\pst$ and some $C>0$ independent of $\eps$,
$$w_{\la,\eps}(x)\geq C\left(\fint_{B_r(x_0)} w_{\la,\eps}^Q\dx\right)^Q.$$
As $w_{\la,\eps}\to w_\la$ in $L^t(\Om)$ for every $0< t<\pst$, $$\lim_{\eps\to0}\left(\fint_{B_r(x_0)} w_{\la,\eps}^Q\dx\right)^Q=\left(\fint_{B_r(x_0)} w_{\la}^Q\dx\right)^Q.$$
Further, since $w_{\la}>0$ a.e. in $\Om$, there exists $\eps_0>0$ such that, for any $0<\eps<\eps_0$, $w_{\la,\eps}\geq C \text{ a.e. in }B_r(x_0).$
\end{proof}

\begin{remark}
Since $u_{\la,\eps}$ is a weak positive solution of \eqref{main_PDE}, it is a weak supersolution of \eqref{sublinear}. By Lemma \ref{comparison-1}, $u_{\la,\eps}\geq w_{\la,\eps} \text{ a.e. in }\Om$, and hence using Lemma \ref{lower_bound}, there exists $\eps_0>0$ such that given any $B_R(x_0)\subset\Om$ and $0<r\leq\min\{1,R\}$, for every $0<\eps<\eps_0$, we have
        \begin{equation}\label{eq_lower}
            u_{\la,\eps}\geq C\quad\text{on } B_r(x_0),
        \end{equation}
where $C>0$ is independent of $\eps$.
\end{remark}
\begin{proposition}\label{energy_prop}
Let $p \in [2, \infty)$ and $(p,q)$ satisfy the condition \eqref{A_pq}. Let $\la\in(0,\la^\#)$, and $u_{\la,\eps}$ be as in the Proposition \ref{B_r lemma}. Let $\eps_0$ be given as in Lemma \ref{lower_bound}. Then there exist $\eps_\la\in(0,\eps_0)$, $R_0>0$ and a positive function $\Psi\in X_0$ such that for every $\eps\in(0,\eps_\la)$ and $R\geq R_0$,
\begin{align}
    &I_{\la,\eps}(u_{\la,\eps}+R\Psi)<I_{\la,\eps}(u_{\la,\eps}), \label{energy estimate}\\
    &I_{\la,\eps}(u_{\la,\eps}+tR_0\Psi)<I_{\la,\eps}(u_{\la,\eps})+\frac1N S_0^{\frac{N}{p}},\quad\forall\,t\in[0,1].\label{energy estimate-2}
\end{align}
\end{proposition}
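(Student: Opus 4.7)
The plan is to construct $\Psi$ as a cut-off rescaled Aubin--Talenti extremizer for the $p$-Sobolev inequality, localized inside a ball $B_r(x_0)\Subset\Om$ on which the lower bound \eqref{eq_lower} gives $u_{\la,\eps}\geq c_0>0$ uniformly in $\eps\in(0,\eps_0)$. Concretely, set $U_\si(x):=\si^{-(N-p)/p}\big(1+|x/\si|^{p/(p-1)}\big)^{-(N-p)/p}$, pick $\eta\in C_c^\infty(B_r(x_0))$ with $\eta\equiv1$ on $B_{r/2}(x_0)$, and take $\Psi_\si:=\eta U_\si$ with $\si>0$ to be chosen (depending on $\la$) at the very end. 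I would record the classical Azorero--Peral asymptotics $\|\nabla\Psi_\si\|_p^p=S_0 K_p+O(\si^{(N-p)/(p-1)})$, $\|\Psi_\si\|_{p^*}^{p^*}=K_p+O(\si^{N/(p-1)})$, the bound $[\Psi_\si]_{s,p}^p\lesssim \si^{-sp}$, and a sharp lower bound for $\|\Psi_\si\|_q^q$ whose leading order behavior is exactly what \eqref{A_pq} is crafted to make favorable.

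For \eqref{energy estimate}, fix $\si$ temporarily. Since $u_{\la,\eps}$ and $\Psi_\si$ are both nonnegative, a Minkowski-type bound yields
\[
I_{\la,\eps}(u_{\la,\eps}+R\Psi_\si)\leq\tfrac{1}{p}\big(\rho_\eps(u_{\la,\eps})+R\rho_\eps(\Psi_\si)\big)^p-\tfrac{R^{p^*}}{p^*}\|\Psi_\si\|_{p^*}^{p^*},
\]
and the right-hand side tends to $-\infty$ as $R\to\infty$ since $p^*>p$, giving the desired $R_0$.

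For \eqref{energy estimate-2}, abbreviate $\phi_t:=tR_0\Psi_\si$. Because $u_{\la,\eps}$ is a critical point, the first variation of $I_{\la,\eps}$ at $u_{\la,\eps}$ against $\phi_t$ vanishes, so
\[
I_{\la,\eps}(u_{\la,\eps}+\phi_t)-I_{\la,\eps}(u_{\la,\eps})=\tfrac{1}{p}\mathsf{L}_t+\tfrac{\eps}{p}\mathsf{F}_t-\tfrac{\la}{q}\mathsf{S}_t-\tfrac{1}{p^*}\mathsf{C}_t,
\]
with $\mathsf{L}_t,\mathsf{F}_t,\mathsf{S}_t,\mathsf{C}_t$ the second-order remainders of the local, nonlocal, subcritical and critical terms. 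Convexity of $s\mapsto|s|^q$ gives $\mathsf{S}_t\geq0$; exploiting $u_{\la,\eps}\geq c_0$ on $\supp\Psi_\si$ together with \eqref{A_pq} converts $-\tfrac{\la}{q}\mathsf{S}_t$ into a strictly negative contribution. For $\mathsf{C}_t$, apply Lemma~\ref{q_ineq}(ii) when $p^*\leq 3$ and Lemma~\ref{q_ineq}(iii)--(iv) when $p^*\geq 3$ to extract the cross term $p^*\int u_{\la,\eps}\phi_t^{p^*-1}$, which, again using $u_{\la,\eps}\geq c_0$ on $\supp\phi_t$, yields a favorable correction of order $-c_0\si^{(N-p)/p}(1+o_\si(1))$. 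For $\mathsf{L}_t$, a Taylor expansion of $|\xi|^p$ together with H\"older's inequality works as in \cite{GaPa1}. For the new fractional remainder, invoke Lemma~\ref{q_ineq}(v) pointwise when $2\leq p<3$ and Lemma~\ref{q_ineq}(vi) when $p\geq3$ inside the double integral, producing $\eps\mathsf{F}_t\lesssim \eps[\Psi_\si]_{s,p}^p+(\text{lower})\lesssim \eps\si^{-sp}$. Combining all of the above and maximizing over $t\in[0,1]$ via $\max_{t>0}\{\tfrac{t^pA}{p}-\tfrac{t^{p^*}B}{p^*}\}=\tfrac{1}{N}(A/B^{p/p^*})^{N/p}$, the bound takes the form
\[
I_{\la,\eps}(u_{\la,\eps}+\phi_t)-I_{\la,\eps}(u_{\la,\eps})\leq\tfrac{1}{N}S_0^{N/p}+C\eps\si^{-sp}+C\si^{(N-p)/(p-1)}-c_0\si^{(N-p)/p}-c_\la\si^{\kappa_{p,q}},
\]
for a suitable exponent $\kappa_{p,q}$ made favorable by \eqref{A_pq}. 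First choose $\si=\si(\la)$ so small that the last two (negative) terms dominate the third (positive) one, then choose $\eps_\la$ so small that $C\eps_\la\si(\la)^{-sp}<\tfrac{1}{2}c_0\si(\la)^{(N-p)/p}$.

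The main obstacle is this final balancing. Unlike the purely local setting of \cite{GaPa1}, the new singular term $\eps[\Psi_\si]_{s,p}^p\sim \eps\si^{-sp}$ couples $\si$ to $\eps$ and forces $\eps_\la\to 0$ as $\si(\la)\to 0^+$, which is why $\eps_\la$ in general must depend on $\la$. In the sub-regime $2\leq p<3$ the sharper Lemma~\ref{q_ineq}(v) yields stronger control of the fractional remainder that permits $\eps_\la$ to be taken independent of $\la$, matching the remark after Theorem~\ref{second_sol}. The two alternatives in \eqref{A_pq} correspond to the two ways of expanding $\mathsf{C}_t$, and the associated range of $q$ is exactly what allows the subcritical correction to absorb both the Azorero--Peral local residual and the fractional singular term.
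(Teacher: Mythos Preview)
Your overall architecture (truncated Aubin--Talenti bubble, first-variation cancellation, case split on $p$, then balance the residuals) matches the paper's. However, several of the key estimates are misidentified, and one of the omissions is exactly the step that produces the restriction \eqref{A_pq}.

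\textbf{The fractional seminorm is not the obstacle.} Your bound $[\Psi_\si]_{s,p}^p\lesssim\si^{-sp}$ has the wrong sign in the exponent: with the $W^{1,p}$-critical scaling one gets $[\Psi_\si]_{s,p}^p=O(\si^{(N-p)/(p-1)})+O(\si^{p(1-s)})$ (see \eqref{e7.2} in the paper, taken from \cite{DFB}), so all fractional pieces \emph{vanish} as $\si\to0$. After multiplying by $\eps$ and choosing the free exponent $\alpha$ small (the paper uses $\si=\eps^\alpha$), these terms are strictly higher order than the favorable $\si^{(N-p)/p}$ correction and play no role in the balancing. In particular, your diagnosis that ``the new singular term $\eps\si^{-sp}$ forces $\eps_\la\to0$'' is not what is happening.

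\textbf{The real obstruction is the local gradient cross term.} Your displayed final inequality keeps only the error $C\si^{(N-p)/(p-1)}$ from $\|\nabla\Psi_\si\|_p^p$, but omits the cross terms produced by expanding $\|\nabla(u_{\la,\eps}+\phi_t)\|_p^p$ via Lemma~\ref{q_ineq}(v),(vi). For $p\geq3$ the expansion \eqref{(b)} gives a residual of order $\si^{2(N-p)/(p(p-1))}$, and since $\tfrac{2(N-p)}{p(p-1)}<\tfrac{N-p}{p}$ for $p>3$, this term \emph{dominates} the favorable critical correction $-c_0\si^{(N-p)/p}$. The paper resolves this (see the discussion after \eqref{faa1}) by retaining the subcritical contribution $-\tfrac{\la}{q}\|\Psi_\si\|_q^q\geq c_\la\,\si^{N-(N-p)q/p}$ and demanding
\[
N-\tfrac{N-p}{p}q<\tfrac{2(N-p)}{p(p-1)}\ \Longleftrightarrow\ q>p^*-\tfrac{2}{p-1},
\]
which is precisely \eqref{range-1}. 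So the case split in \eqref{A_pq} is driven by the two expansions of the \emph{gradient} remainder $\mathsf{L}_t$ (Lemma~\ref{q_ineq}(v) for $2\leq p<3$, Lemma~\ref{q_ineq}(vi) for $p\geq3$), not by $\mathsf{C}_t$ as you write. For $2\leq p<3$ the $\zeta_1$-cross term has exponent $\zeta>\tfrac{N-p}{p}$ and is harmless, so $L_3$ is simply discarded by convexity---this is why $\eps_\la$ is $\la$-independent in that range (cf.\ Remark~\ref{lambda-ind} and the remark after Theorem~\ref{second_sol}). For $p\geq3$ the $\la$-dependence of $\eps_\la$ enters because the rescuing subcritical term carries an explicit factor of $\la$, not because of any fractional singularity.
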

\begin{proof}
From Remark \ref{lower_remark}, we know that $u_{\la,\eps}\leq C$.
 Now we choose a Lebesgue point $y$ of $u_{\la,\eps}$ in $\Om$. Let $r>0$ be such that $B_{2r}(y)\Subset\Om$. Choose a cutoff function $\phi\in C_c^{\infty}(B_{2r}(y))$ such that $0\leq\phi\leq1$ in $\Om$, $\phi\equiv1$ in $B_r(y)$ and $|\nabla\phi|\leq2/r.$ We then consider the family of scalings of the Talenti function:
    $$V_{\eps}(x):=K_{N,p}\frac{\eps^{\frac{\al(N-p)}{p(p-1)}}}{\left(\eps^{\frac{\al p}{p-1}}+|x-y|^{\frac{p}{p-1}}\right)^{\frac{N-p}{p}}},\quad \eps>0,$$
    where $K_{N,p}$ is a normalization constant and $\alpha>0$ is to be chosen later. Since $V_\eps\in \dot{W}^{1,p}(\RR),$ we consider the following family of functions $U_\eps:=V_\eps\phi$ which is supported in $\Om$. Using \cite[pg. 947]{GaPa1} and \cite[Lemma 5.3]{DFB}, as $\eps\to0$,
    \begin{equation}\label{e7.2}
        \begin{aligned}
       &\|\nabla U_\eps\|_p^p= K_1+O\left(\eps^{\frac{\alpha(N-p)}{p-1}}\right), \\
        & \|U_\eps\|_{\pst}^{\pst}= K_2-O\left(\eps^{\frac{\al N }{p-1}}\right), \text{ where } \frac{K_1}{K_2^{\frac{p}{\pst}}}=S_0, \text{ and }\\
        &[U_\eps]_{s,p}^p=O\left(\eps^{\frac{\al (N-p)}{p-1}}\right)+O\left(\eps^{\alpha(p-ps)}\right).
    \end{aligned}
    \end{equation}
    Moreover, we use the following estimates (see \cite[Lemma A5]{GaPa1}):
    \begin{equation}\label{i-3}
        \int_{\Om}|\nabla U_\eps|^t\dx\leq C\eps^{\alpha\frac{N-p}{p(p-1)}t}, \text{ when } 1\leq t<\frac{N(p-1)}{N-1}.
    \end{equation}
    Further, for $t>0$ and $0< \eps< r^{\frac{1}{\al}}$,
    \begin{equation}\label{lb-2}
    \begin{split}
\int_{B_r(y)}V_\eps^t\dx &=\frac{\eps^{\alpha\frac{N-p}{p(p-1)}t}}{\eps^{\alpha\frac{N-p}{p-1}t}}\int_{B_r(y)}\frac{1}{\left(1+\left(\frac{|x-y|}{\eps^\alpha}\right)^{\frac{p}{p-1}}\right)^{\frac{N-p}{p}t}}\dx\\
&=C\eps^{-\alpha\frac{N-p}{p}t}\int_{0}^r\frac{\tau^{N-1}}{\left(1+(\frac{\tau}{\eps^\alpha})^{\frac{p}{p-1}}\right)^{\frac{N-p}{p}t}}\mathrm{d}\tau\\&=C\eps^{\alpha\left(N-\frac{N-p}{p}t\right)}\int_0^{\frac{r}{\eps^{\alpha}}}\frac{s^{N-1}}{(1+s^{\frac{p}{p-1}})^{\frac{N-p}{p}t}}\ds \\
&\geq C\eps^{\alpha\left(N-\frac{N-p}{p}t\right)}\int_0^{1}\frac{s^{N-1}}{(1+s^{\frac{p}{p-1}})^{\frac{N-p}{p}t}}\ds=C\eps^{\alpha\left(N-\frac{N-p}{p}t\right)}.
\end{split}
\end{equation}
  For $0 \le t \le 1 \le R$, we set $w=u_{\la,\eps}+tRU_{\eps}$. Then we have
    \begin{align}\label{(a)}
        I_{\la,\eps}(w)=\frac{\|\nabla u_{\la,\eps}+tR\nabla U_{\eps}\|_p^p+\eps[u_{\la,\eps}+tRU_{\eps}]_{s,p}^p}{p}-\frac{\|w\|_{\pst}^{\pst}}{\pst}-\frac{\la\|w\|_q^q}{q}.
    \end{align}

Now, depending on the range of $p$, we divide the rest of the proof into two parts.
\medskip

\noi \textbf{When $2\leq p<3$:}
Using the inequality \eqref{i-1}, we get the following gradient estimate:
\begin{equation}\label{GE1}
\begin{aligned}
    \|\nabla u_{\la,\eps}+tR\nabla U_{\eps}\|_p^p&\leq \|\nabla u_{\la,\eps}\|_p^p+(tR)^p\|\nabla U_\eps\|_p^p+ptR\int_{\Om}|\nabla u_{\la,\eps}|^{p-2}\nabla u_{\la,\eps}\cdot\nabla U_{\eps}\dx\\&\quad+C(tR)^{\zeta_1}\int_{\Om}|\nabla u_{\la,\eps}|^{p-\zeta_1}|\nabla U_\eps|^{\zeta_1}\dx,
\end{aligned}
\end{equation}
where $\zeta_1\in[p-1,2]$. For the Gagliardo norm, we again use \eqref{i-1}. For brevity, we denote, for $x,y\in\RR$,
$$\d\mu :=\frac{\dx\dy}{|x-y|^{N+sp}},\; f(x,y):=u_{\la,\eps}(x)-u_{\la,\eps}(y),\;  g(x,y):=tR(U_{\eps}(x)-U_\eps(y)).$$
We write
\begin{align*}
    [u_{\la,\eps}+tRU_{\eps}]_{s,p}^p&=\iint\limits_{\R^{2N}}\left|(u_{\la,\eps}(x)-u_{\la,\eps}(y))+tR(U_{\eps}(x)-U_\eps(y))\right|^p\d\mu\\
&=\iint\limits_{U}|f+g|^p\d\mu+\iint\limits_{U^c}|g|^p\d\mu=:I_1+I_2,
\end{align*}
where $U=\{(x,y) \in \R^{2N}: f(x,y) \neq 0\}$.
Now, we estimate $I_1$ as
\begin{align*}
     I_1&=\iint\limits_{U} |(f+g)^2|^{\frac p2}\d\mu=\iint\limits_{U}|f|^p\left|1+\frac{|g|^2}{|f|^2}+2\frac{g}{f}\right|^{\frac p2}\d\mu\\
    &=\iint\limits_{U} \left( |f|^p\left|1+\frac{|g|^2}{|f|^2}+2\frac{|g|}{|f|}\,\text{sgn}\left(\frac gf\right)\right|^{\frac p2} \right) \d\mu\\
    &\leq \iint\limits_{U} \left(|f|^p+|g|^p+p|f|^{p-1}|g|\,\text{sgn}\left(\frac gf\right)+C|f|^{p-\zeta_1}|g|^{\zeta_1} \right)\d\mu,
\end{align*}
where $\zeta_1\in[p-1,2]$ and the last inequality follows from \eqref{i-1}. We note that $$\iint\limits_{U}|f|^p\d\mu=\iint\limits_{\R^{2N}}|f|^p\d\mu=[u_{\la,\eps}]_{s,p}^p, \text{ and  } \iint\limits_{U}|g|^p\d\mu + I_2 = [tRU_\eps]_{s,p}^p.$$
Since, $\text{sgn}\left(\frac{g}{f}\right)=\text{sgn}(fg)$ on $U$, we get
\begin{align*}
     \iint\limits_{U}|f|^{p-1}|g| \, \text{sgn}\left(\frac gf\right)\d\mu&=\iint\limits_{U}|f|^{p-2}|f||g| \, \text{sgn}(fg)\d\mu
    \\&=\iint\limits_{U}|f|^{p-2}fg\d\mu=\AA(u_{\la,\eps},tRU_\eps).
\end{align*}
Applying H\"{o}lder's inequality with coefficients $\left(\frac{p}{p-\zeta_1},\frac{p}{\zeta_1}\right)$, we have $$\iint\limits_{U}C|f|^{p-\zeta_1}|g|^{\zeta_1}\d\mu=\iint\limits_{\R^{2N}}C|f|^{p-\zeta_1}|g|^{\zeta_1}\d\mu\leq C[u_{\la,\eps}]_{s,p}^{p-\zeta_1}[tRU_{\eps}]_{s,p}^{\zeta_1}.$$
Combing the above inequalities, we finally get
\begin{align}\label{(c)}
    [u_{\la,\eps}+tRU_{\eps}]_{s,p}^p
    &\leq [u_{\la,\eps}]_{s,p}^p+(tR)^p[U_{\eps}]_{s,p}^p+ptR\AA(u_{\la,\eps},U_\eps)\no\\
    &+C(tR)^{\zeta_1}[u_{\la,\eps}]_{s,p}^{p-\zeta_1}[U_{\eps}]_{s,p}^{\zeta_1}.
\end{align}

Using \eqref{GE1} and \eqref{(c)} in \eqref{(a)}, we obtain
\begin{align*}
I_{\la,\eps}(w)&\leq\frac1p\rho_\eps(u_{\la,\eps})^p+\frac{(tR)^p}p\rho_\eps(U_\eps)^p\\
&\quad+tR\left(\int_{\Om}|\nabla u_{\la,\eps}|^{p-2}\nabla u_{\la,\eps}\cdot\nabla U_{\eps}\dx+\eps\AA(u_{\la,\eps},U_\eps)\right)\\&\quad+C(tR)^{\zeta_1}\left(\int_{\Om}|\nabla u_{\la,\eps}|^{p-\zeta_1}|\nabla U_\eps|^{\zeta_1}\dx+\eps[u_{\la,\eps}]_{s,p}^{p-\zeta_1}[U_{\eps}]_{s,p}^{\zeta_1}\right)\\&\quad-\frac{\|w\|_{\pst}^{\pst}}{\pst}-\frac{\la\|w\|_q^q}{q}.
\end{align*}
Since $u_{\la,\eps}$ solves \eqref{main_PDE}, we further have
\begin{align*}
I_{\la,\eps}(w)&\leq\frac1p\rho_\eps(u_{\la,\eps})^p+\frac{(tR)^p}p\rho_\eps(U_\eps)^p\\
&\quad+C(tR)^{\zeta_1}\left(\int_{\Om}|\nabla u_{\la,\eps}|^{p-\zeta_1}|\nabla U_\eps|^{\zeta_1}\dx+\eps[u_{\la,\eps}]_{s,p}^{p-\zeta_1}[U_{\eps}]_{s,p}^{\zeta_1}\right)\\&\quad-\frac1{\pst}\left(\|u_{\la,\eps}+tRU_{\eps}\|_{\pst}^{\pst}-\pst tR\int_{\Om}u_{\la,\eps}^{\pst-1}U_\eps\dx\right)\\&\quad-\frac{\la}q\left(\|u_{\la,\eps}+tRU_{\eps}\|_q^q-qtR\int_{\Om}u_{\la,\eps}^{q-1}U_\eps\dx\right)\\
&=I_{\la,\eps}(u_{\la,\eps})+\frac{(tR)^p}p\rho_\eps(U_\eps)^p-\frac{(tR)^{\pst}}{\pst}\|U_\eps\|_{\pst}^{\pst}-(tR)^{\pst-1}\int_{\Om}U_\eps^{\pst-1}u_{\la,\eps}\dx\\
&\quad+L_1-L_2-L_3,
\end{align*}
where $$L_1:=C(tR)^{\zeta_1}\left(\int_{\Om}|\nabla u_{\la,\eps}|^{p-\zeta_1}|\nabla U_\eps|^{\zeta_1}\dx+\eps[u_{\la,\eps}]_{s,p}^{p-\zeta_1}[U_{\eps}]_{s,p}^{\zeta_1}\right),$$
and
\begin{align*}
 L_2&:=\frac1{\pst}\int_{\Om}|u_{\la,\eps}+tRU_{\eps}|^{\pst}-|u_{\la,\eps}|^{\pst}-(tR)^{\pst}|U_{\eps}|^{\pst}\\
 &-\pst tRu_{\la,\eps}U_\eps\left(u_{\la,\eps}^{\pst-2}+\left(tRU_\eps\right)^{\pst-2}\right)\dx,
\end{align*}
and $$L_3:=\frac{\la}q\int_{\Om}|u_{\la,\eps}+tRU_{\eps}|^q-|u_{\la,\eps}|^q-qtRu_{\la,\eps}^{q-1}U_\eps\dx.$$
Using the convexity of the map $f(t)=t^q$, we see that $L_3\geq0$.
Next, we estimate $L_1$.  Because of Lemma \ref{regular}, the family $\{u_{\la,\eps}\}_\eps$ is uniformly bounded (with respect to $\eps$) in $L^{\infty}(\Om)$, $W^{1,p}(\Om)$, and by Poincar\'{e} inequality and Sobolev embedding theorem, in $W^{s,p}(\RR)$ as well. We now fix
$\zeta_1$ satisfying
\begin{align*}
    p-1 < \zeta_1 < \min \left\{ 2, \frac{N(p-1)}{N-1}  \right\}.
\end{align*}
Then using Lemma \ref{regular}-(c) and \eqref{i-3} for the gradient term and \eqref{e7.2} in the Gagliardo norms, we obtain
\begin{align*}
    L_1&\leq C(tR)^{\zeta_1}\left(\int_{B_{2r}(y)}|\nabla u_{\la,\eps}|^{p-\zeta_1}|\nabla U_\eps|^{\zeta_1}\dx+\eps[u_{\la,\eps}]_{s,p}^{p-\zeta_1}[U_\eps]_{s,p}^{\zeta_1}\right)\\
    &\leq C(tR)^{\zeta_1}\left(O\left(\eps^{\alpha\zeta}\right)+O\left(\eps^{1+\alpha\frac{N-p}{p}\frac{\zeta_1}{p-1}}\right)+O\left(\eps^{1+\alpha(1-s)\zeta_1}\right)\right)\\
    &\leq C(tR)^{\zeta_1}\left(O\left(\eps^{\alpha\zeta}\right)+O\left(\eps^{1+\alpha\frac{N-p}{p}}\right)+O\left(\eps^{1+\alpha(p-1)(1-s)}\right)\right),
\end{align*}
where $\zeta := \frac{\zeta_1(N-p)}{p(p-1)}>\frac{N-p}{p}$.
We next claim that
\begin{equation}\label{L2-estimate}
    \begin{cases}
        L_2\geq0,\quad\text{if } \pst\geq3;\\
        |L_2|\leq C(tR)^\beta \eps^{\alpha\frac{N}{p}\theta},\text{ for some }\beta>0,\; \forall \, 0<\theta<1, \quad\text{if } 2<\pst<3.
    \end{cases}
\end{equation}
Indeed, using Lemma \ref{q_ineq}-(iii), we  see that $L_2\geq0$ for any $\pst\geq3$. Now for $2<\pst<3$, we make use of the inequality \eqref{elemt.ineq_pst}.
For $i=1,2$, let $a_i+b_i=\pst-1$ and $a_i,b_i>0$ to be chosen later. We use the uniform boundedness of $u_{\la,\eps}$ in $L^{\infty}(\Om)$ w.r.t. $\eps$. Then
\begin{align*}
    |L_2|&\leq C\int_{\{u_{\la,\eps}\leq tRU_\eps\}}u_{\la,\eps}^{\pst-1}tRU_\eps\dx+C\int_{\{tRU_\eps\leq u_{\la,\eps} \}}u_{\la,\eps}(tRU_\eps)^{\pst-1}\dx\\
    &\leq C\int_{\{u_{\la,\eps}\leq tRU_\eps\}} u_{\la,\eps}^{a_1}(tRU_\eps)^{1+b_1}\dx+C\int_{\{tRU_\eps\leq u_{\la,\eps} \}}u_{\la,\eps}^{1+a_2}(tRU_\eps)^{b_2}\dx\\
    &\leq C(tR)^{1+b_1}\int_{B_{2r}(y)}U_\eps^{1+b_1}\dx+C(tR)^{b_2}\int_{B_{2r}(y)}U_{\eps}^{b_2}\dx\\
    &\leq C\left(tR\eps^{\alpha\frac{N-p}{p(p-1)}}\right)^{1+b_1}\int_{B_{2r}(0)}\frac{1}{|x|^{\frac{N-p}{p-1}(1+b_1)}}\dx\\&\quad+C\left(tR\eps^{\alpha\frac{N-p}{p(p-1)}}\right)^{b_2}\int_{B_{2r}(0)}\frac{1}{|x|^{\frac{N-p}{p-1}b_2}}\dx.
\end{align*}
Note that for $0<1+b_1,\, b_2<\frac{N(p-1)}{N-p}$ the above integrals are finite. This leads us to choose $b_1,b_2$ with $p-1<1+b_1,\, b_2<\frac{N(p-1)}{(N-p)}$ such that for $\theta\in(0,1)$ and $0<\beta<\frac{N(p-1)}{N-p}$,
$$|L_2|\leq C(tR)^\beta\eps^{\alpha\frac{N}{p}\theta},$$
where $C>0$ is independent of $\eps$.
Thus, \eqref{L2-estimate} holds. Using \eqref{eq_lower} and \eqref{lb-2} we get
\begin{align}\label{estimate-3}
    \int_{\Om}U_{\eps}^{\pst-1}u_{\la,\eps}\dx\geq C\int_{B_r(y)}V_\eps^{\pst-1}\dx\geq C\eps^{\alpha\frac{N-p}p}.
\end{align} Choose $\alpha$ small enough such that
\begin{align}\label{choice-1}
    1+\alpha p(1-s)>1+\alpha(p-1)(1-s)>\alpha \frac{N-p}{p-1}.
\end{align}
Combining all the above inequalities,
 \begin{equation}\label{(d1)}
     \begin{aligned}
         I_{\la,\eps}(w)&\leq I_{\la,\eps}(u_{\la,\eps})+\frac{(tR)^p}p K_1-\frac{(tR)^{\pst}}{\pst}K_2+\frac{(tR)^p}pO\left(\eps^{\alpha\frac{N-p}{p-1}}\right)\\&\quad +\frac{(tR)^{\pst}}{\pst}O\left(\eps^{\alpha \frac{N}{p-1}}\right)-(tR)^{\pst-1} C\eps^{\alpha \frac{N-p}p}+C(tR)^\beta\eps^{\alpha\frac{N}{p}\theta}\\&\quad+C(tR)^{\zeta_1}\left(O\left(\eps^{\alpha\zeta}\right)+O\left(\eps^{1+\alpha\frac{N-p}{p}}\right)+O\left(\eps^{\alpha\frac{N-p}{p-1}}\right)\right).
     \end{aligned}
 \end{equation}
For $t=1$, we choose $R_0>0$ such that for any $R\geq R_0$ and $\eps$ small enough,
\begin{equation}\label{(e1)}
    \begin{aligned}
    &\left(\frac{R_0^p}p K_1-\frac{R_0^{\pst}}{\pst}K_2-R_0^{\pst-1}C\eps^{\alpha \frac{N-p}p}\right)\\&+\Bigg(C R_0^{\zeta_1}\left(O\left(\eps^{\alpha\zeta}\right)+O\left(\eps^{1+\alpha\frac{N-p}{p}}\right)+O\left(\eps^{\alpha\frac{N-p}{p-1}}\right)\right)\\
    &+\frac{R_0^p}pO\left(\eps^{\alpha\frac{N-p}{p-1}}\right)+\frac{R_0^{\pst}}{\pst}O\left(\eps^{\alpha \frac{N}{p-1}}\right)+CR_0^\beta\eps^{\alpha\frac{N}{p}\theta}\Bigg)<0.
\end{aligned}
\end{equation}
Thus $I_{\la,\eps}(u_{\la,\eps}+RU_\eps)<I_{\la,\eps}(u_{\la,\eps})$, for $R\geq R_0$, proving  \eqref{energy estimate}. Next, we fix $R=R_0$ and define $$\phi(t)=\frac{(tR_0)^p}p K_1-\frac{(tR_0)^{\pst}}{\pst}K_2-(tR_0)^{\pst-1}C\eps^{\alpha \frac{N-p}p}.$$Recall that $\zeta>\frac{N-p}{p}$ and the choice of $b_1,b_2$ implies that  $\theta>\frac{N-p}{p}$. Then except $-(tR)^{\pst-1} C\eps^{\alpha \frac{N-p}p}$, all other epsilons in \eqref{(d1)} have exponents larger than $\al \frac{N-p}p$. Hence for $t\in[0,1]$, we rewrite \eqref{(d1)} as
\begin{equation}
    I_{\la,\eps}(u_{\la,\eps}+tR_0U_{\eps})\leq I_{\la,\eps}(u_{\la,\eps})+\phi(t)+o(\eps^{\al \frac{N-p}{p}}).
\end{equation}
Notice that $\phi(0)=0$, $\phi(t)>0$ for small $t>0$ and by \eqref{(e1)}, $\phi(1)<0$. So $\phi$ attains its maximum at some $t_\eps\in(0,1)$. Then,
\begin{equation}\label{(f1)}
    I_{\la,\eps}(u_{\la,\eps}+tR_0U_{\eps})\leq I_{\la,\eps}(u_{\la,\eps})+\phi(t_\eps)+o\left(\eps^{\alpha\frac{N-p}{p}}\right).
\end{equation}
Suppose $t_\eps\to0$ as $\eps\to0$. Then for small enough $\eps$, from \eqref{(f1)} we get
$$I_{\la,\eps}(u_{\la,\eps}+tR_0U_{\eps})<I_{\la,\eps}(u_{\la,\eps})+\frac1N  S_0^{\frac{N}{p}}.$$
If $t_{\eps} \not \rightarrow 0$, then there exists some $T\in(0,1)$ such that $T<t_\eps<1$ for every $\eps$ small. Using the expression of $S_0$ in \eqref{e7.2}, We observe that $$\max_{t\geq0}\left(\frac{(tR_0)^p}p K_1-\frac{(tR_0)^{\pst}}{\pst}K_2\right)=\frac{(t_0R_0)^p}p K_1-\frac{(t_0R_0)^{\pst}}{\pst}K_2=\frac1N S_0^{\frac{N}{p}},$$ where $t_0^{\pst-p}=\frac{R_0^pK_1}{R_0^{\pst}K_2}$ is the maximum point. Thus \eqref{(f1)} gives
\begin{equation}\label{fin_est}
    I_{\la,\eps}(u_{\la,\eps}+tR_0U_{\eps})\leq I_{\la,\eps}(u_{\la,\eps})+\frac1N S_0^{\frac{N}{p}}-(TR_0)^{\pst-1}C\eps^{\alpha \frac{N-p}p}+o\left(\eps^{\alpha\frac{N-p}{p}}\right).
\end{equation}
Hence, there exists $\eps_0\in(0,1)$ such that for all $\eps<\eps_0$ and any $t\in[0,1]$, $$I_{\la,\eps}(u_{\la,\eps}+tR_0U_{\eps})< I_{\la,\eps}(u_{\la,\eps})+\frac1NS_0^{\frac{N}{p}}.$$
\noi \textbf{When $p\geq3$:} In this case, we use Lemma \ref{q_ineq}-(vi). Thus, we can write the gradient term as
\begin{equation}\label{(b)}
    \begin{aligned}
        &\|\nabla u_{\la,\eps}+tR\nabla U_{\eps}\|_p^p\leq \|\nabla u_{\la,\eps}\|_p^p+(tR)^p\|\nabla U_\eps\|_p^p+ptR\int_{\Om}|\nabla u_{\la,\eps}|^{p-2}\nabla u_{\la,\eps}\cdot\nabla U_{\eps}\dx\\&+C\left((tR)^2\int_{\Om}|\nabla u_{\la,\eps}|^{p-2}|\nabla U_{\eps}|^{2}\dx+(tR)^{p-1}\int_{\Om}|\nabla u_{\la,\eps}|\,|\nabla U_{\eps}|^{p-1}\dx\right).
    \end{aligned}
\end{equation}
Similar to the previous case, using \eqref{i-2} and the H\"{o}lder's inequality, we can write the nonlocal term as
\begin{equation}\label{faa1}
    \begin{aligned}
    [u_{\la,\eps}+tRU_\eps]_{s,p}^p&\leq [u_{\la,\eps}]_{s,p}^p+(tR)^p[U_\eps]_{s,p}^p+ptR\AA(u_{\la,\eps},U_\eps)\\&\quad+C((tR)^2[u_{\la,\eps}]_{s,p}^{p-2}[U_\eps]_{s,p}^2+(tR)^{p-1}[u_{\la,\eps}]_{s,p}[U_\eps]_{s,p}^{p-1}).
\end{aligned}
\end{equation}
As we will see later, for small enough $\alpha$, the last two cross terms in \eqref{faa1} have order greater than $\eps^{\alpha\frac{N-p}{p}}$. But the last two cross terms in \eqref{(b)} have order less than $\eps^{\al\frac{N-p}{p}}$.  This poses a challenge because, following the same approach as in the previous case, we obtain
\begin{align*}
 I_{\la,\eps}(u_{\la,\eps}+tR_0U_{\eps})&\leq I_{\la,\eps}(u_{\la,\eps})+\frac1N   S_0^{\frac{N}{p}}-(TR_0)^{\pst-1}C\eps^{\alpha \frac{N-p}p}\\&+R_0^{2}O\left(\eps^{\alpha\frac{2(N-p)}{p(p-1)}}\right)+o\left(\eps^{\alpha\frac{N-p}{p}}\right).
\end{align*}
Notice that, since $\frac{2(N-p)}{p(p-1)}<\frac{N-p}{p}$, for small enough $\eps,$ $\eps^{\frac{2(N-p)}{p(p-1)}}$ dominates $\eps^{\alpha\frac{N-p}{p}}$. So it is difficult to conclude \eqref{energy estimate-2} from the above estimate. To address this issue, we retain the $q$-terms in this case. Using \eqref{(b)} and \eqref{(c)} in \eqref{(a)}, we obtain
\begin{align*}
I_{\la,\eps}(w)&\leq\frac1p\rho_\eps(u_{\la,\eps})^p+\frac{(tR)^p}p\rho_\eps(U_\eps)^p\\&\quad+tR\left(\int_{\Om}|\nabla u_{\la,\eps}|^{p-2}\nabla u_{\la,\eps}\cdot\nabla U_{\eps}\dx+\eps\AA(u_{\la,\eps},U_\eps)\right)\\&\quad+C\left((tR)^2\int_{\Om}|\nabla u_{\la,\eps}|^{p-2}|\nabla U_{\eps}|^{2}\dx+(tR)^{p-1}\int_{\Om}|\nabla u_{\la,\eps}|\,|\nabla U_{\eps}|^{p-1}\dx\right)\\
&\quad+C((tR)^2\eps[u_{\la,\eps}]_{s,p}^{p-2}[U_\eps]_{s,p}^2+(tR)^{p-1}\eps[u_{\la,\eps}]_{s,p}[U_\eps]_{s,p}^{p-1})\\
&\quad-\frac{\|w\|_{\pst}^{\pst}}{\pst}-\frac{\la\|w\|_q^q}{q}.
\end{align*}
Since $u_{\la,\eps}$ solves \eqref{main_PDE}, we further have
\begin{align*}
I_{\la,\eps}(w)&\leq\frac1p\rho_\eps(u_{\la,\eps})^p+\frac{(tR)^p}p\rho_\eps(U_\eps)^p\\&\quad+C((tR)^2\eps[u_{\la,\eps}]_{s,p}^{p-2}[U_\eps]_{s,p}^2+(tR)^{p-1}\eps[u_{\la,\eps}]_{s,p}[U_\eps]_{s,p}^{p-1})\\&\quad-\frac1{\pst}\left(\|u_{\la,\eps}+tRU_{\eps}\|_{\pst}^{\pst}-\pst tR\int_{\Om}u_{\la,\eps}^{\pst-1}U_\eps\dx\right)\\&\quad-\frac{\la}q\left(\|u_{\la,\eps}+tRU_{\eps}\|_q^q-qtR\int_{\Om}u_{\la,\eps}^{q-1}U_\eps\dx\right)\\
&\quad+C\left((tR)^2\int_{\Om}|\nabla u_{\la,\eps}|^{p-2}|\nabla U_{\eps}|^{2}\dx+(tR)^{p-1}\int_{\Om}|\nabla u_{\la,\eps}|\,|\nabla U_{\eps}|^{p-1}\dx\right)\\
&=I_{\la,\eps}(u_{\la,\eps})+\frac{(tR)^p}p\rho_\eps(U_\eps)^p-\frac{(tR)^{\pst}}{\pst}\|U_\eps\|_{\pst}^{\pst}-(tR)^{\pst-1}\int_{\Om}U_\eps^{\pst-1}u_{\la,\eps}\dx\\&\quad+L_1-L_2-L_3,
\end{align*}
where
\begin{align*}
    L_1&:=C\left((tR)^2\int_{\Om}|\nabla u_{\la,\eps}|^{p-2}|\nabla U_{\eps}|^{2}\dx+(tR)^{p-1}\int_{\Om}|\nabla u_{\la,\eps}|\,|\nabla U_{\eps}|^{p-1}\dx\right)\\
    &\quad+C\left((tR)^2\eps[u_{\la,\eps}]_{s,p}^{p-2}[U_\eps]_{s,p}^2+(tR)^{p-1}\eps[u_{\la,\eps}]_{s,p}[U_\eps]_{s,p}^{p-1}\right),
\end{align*}
and
\begin{align*}
L_2 &:=\frac1{\pst}\int_{\Om}|u_{\la,\eps}+tRU_{\eps}|^{\pst}-|u_{\la,\eps}|^{\pst}-(tR)^{\pst}|U_{\eps}|^{\pst}\\
&\quad-\pst tRu_{\la,\eps}U_\eps(u_{\la,\eps}^{\pst-2}+(tRU_\eps)^{\pst-2})\dx,
\end{align*}
and $$L_3:=\frac{\la}q\int_{\Om}|u_{\la,\eps}+tRU_{\eps}|^q-|u_{\la,\eps}|^q-qtRu_{\la,\eps}^{q-1}U_\eps\dx.$$
For $L_3$, using Lemma \ref{q_ineq}-(iv) and the definition of $U_\eps$, we get
\begin{equation}
    L_3\geq
        \la\frac{(tR)^{q}}{q}\|U_\eps\|_{q}^{q}\geq\la\frac{(tR)^{q}}{q}\int_{B_r(y)}V_\eps^{q}\dx,\quad q\geq2.
\end{equation}
For $L_1$, we use Lemma \ref{regular}-(c) and \eqref{i-3} for the gradient term and \eqref{e7.2} for the nonlocal term to get
\begin{align*}
L_1&=C\left((tR)^2\int_{\Om}|\nabla u_{\la,\eps}|^{p-2}|\nabla U_{\eps}|^{2}\dx+(tR)^{p-1}\int_{\Om}|\nabla u_{\la,\eps}|\,|\nabla U_{\eps}|^{p-1}\dx\right)\\
    &\quad+C((tR)^2\eps[u_{\la,\eps}]_{s,p}^{p-2}[U_\eps]_{s,p}^2+(tR)^{p-1}\eps[u_{\la,\eps}]_{s,p}[U_\eps]_{s,p}^{p-1})\\
    &\leq C\left((tR)^2\int_{B_{2r}(y)}|\nabla U_\eps|^2\dx+(tR)^{p-1}\int_{B_{2r}(y)}|\nabla U_\eps|^{p-1}\dx\right)\\
    &\quad+C((tR)^2\eps[U_\eps]_{s,p}^2+(tR)^{p-1}\eps[U_\eps]_{s,p}^{p-1})\\
    &\leq C\left((tR)^2O\left(\eps^{\frac{2\alpha(N-p)}{p(p-1)}}\right)+(tR)^{p-1}O\left(\eps^{\alpha\frac{N-p}{p}}\right)\right)\\
    &\quad+C\bigg((tR)^2\left(O\left(\eps^{1+\alpha\frac{2(N-p)}{p(p-1)}}\right)+O\left(\eps^{1+\alpha(2-2s)}\right)\right)\\&\quad+(tR)^{p-1}\left(O\left(\eps^{1+\alpha\frac{N-p}{p}}\right)+O\left(\eps^{1+\alpha(p-1)(1-s)}\right)\right)\bigg).
\end{align*}
Notice that $1+\alpha (p-ps)>1+\alpha(p-1)(1-s)\geq1+\alpha(2-2s)$. We choose $\alpha>0$ small enough such that $\min\left\{1+\alpha(2-2s),1+\alpha\frac{2(N-p)}{p(p-1)}\right\}>\alpha\frac{N-p}{p}$. Hence,
\begin{equation}
    L_1\leq C\left((tR)^2O\left(\eps^{\frac{2\alpha(N-p)}{p(p-1)}}\right)+(tR)^{p-1}O\left(\eps^{\alpha\frac{N-p}{p}}\right)\right).
\end{equation}
 Combining \eqref{estimate-3}, \eqref{L2-estimate}, and the above inequalities, for any $i=1,2$,
 \begin{equation}\label{(d)}
     \begin{aligned}
         I_{\la,\eps}(w)&\leq I_{\la,\eps}(u_{\la,\eps})+\frac{(tR)^p}p K_1-\frac{(tR)^{\pst}}{\pst}K_2\\
         &\quad-\la\frac{(tR)^{q}}{q}\int_{B_r(y)}V_\eps^{q}\dx+\frac{(tR)^p}pO\left(\eps^{\alpha\frac{N-p}{p-1}}\right)\\&\quad+\frac{(tR)^{\pst}}{\pst}O\left(\eps^{\alpha \frac{N}{p-1}}\right)-C(tR)^{\pst-1} \eps^{\alpha \frac{N-p}p}\\
         &\quad+C(tR)^2O\left(\eps^{\frac{2\alpha(N-p)}{p(p-1)}}\right)+C(tR)^{p-1}O\left(\eps^{\alpha\frac{N-p}{p}}\right).
     \end{aligned}
 \end{equation}
For $t=1$, we choose $R_0>0$ such that for any $R\geq R_0$ and $\eps$ small enough,
\begin{equation}\label{(e)}
    \begin{aligned}
    &\left(\frac{R_0^p}p K_1-\frac{R_0^{\pst}}{\pst}K_2-R_0^{\pst-1}C\eps^{\alpha \frac{N-p}p}+CR_0^2O\left(\eps^{\frac{2\alpha(N-p)}{p(p-1)}}\right)+CR_0^{p-1}O\left(\eps^{\alpha\frac{N-p}{p}}\right)\right)\\&\quad+\left(\frac{R_0^p}p O\left(\eps^{\alpha\frac{N-p}{p-1}}\right)+\frac{R_0^{\pst}}{\pst}O\left(\eps^{\alpha \frac{N}{p-1}}\right)\right)<0.
\end{aligned}
\end{equation}
Thus $I_{\la,\eps}(u_{\la,\eps}+RU_\eps)<I_{\la,\eps}(u_{\la,\eps})$, for $R\geq R_0$, proving the first inequality of \eqref{energy estimate}. Next, we fix $R=R_0$ and define
\begin{equation}
\begin{aligned}
    \phi(t)&=\frac{(tR_0)^p}p K_1-\frac{(tR_0)^{\pst}}{\pst}K_2-\la\frac{(tR)^{q}}{q}\int_{B_r(y)}V_\eps^{q}\dx\\
    &\quad-(tR_0)^{\pst-1}C\eps^{\alpha \frac{N-p}p}+C(tR_0)^2O\left(\eps^{\frac{2\alpha(N-p)}{p(p-1)}}\right)+C(tR_0)^{p-1}O\left(\eps^{\alpha\frac{N-p}{p}}\right).
\end{aligned}
\end{equation}
For $t\in[0,1]$, we rewrite \eqref{(d)} as
\begin{equation}\label{(e2)}
    I_{\la,\eps}(u_{\la,\eps}+tR_0U_{\eps})\leq I_{\la,\eps}(u_{\la,\eps})+\phi(t)+o\left(\eps^{\al \frac{N-p}{p}}\right).
\end{equation}
As in the previous case, here we also have $\phi(0)=0$ and by \eqref{(e)}, $\phi(1)<0$. But because of the $q$ term, $\phi(t)<0$ for small $t>0$. So we consider two scenarios: (1) $\phi(t)\leq0$ for all $t\in[0,1]$ and (2) $\phi$ is positive for some $t\in(0,1)$. In the first scenario, for small enough $\eps$ we observe from \eqref{(e2)},
$$I_{\la,\eps}(u_{\la,\eps}+tR_0U_{\eps})<I_{\la,\eps}(u_{\la,\eps})+\frac1N  S_0^{\frac{N}{p}}.$$
Suppose the second scenario happens, then we can say that $\phi$ attains its positive maximum at some $t_\eps\in(0,1)$. Thus,
\begin{equation}\label{(f)}
    I_{\la,\eps}(u_{\la,\eps}+tR_0U_{\eps})\leq I_{\la,\eps}(u_{\la,\eps})+\phi(t_\eps)+o\left(\eps^{\alpha\frac{N-p}{p}}\right).
\end{equation}
Suppose $t_\eps\to0$ as $\eps\to0$. Then for small enough $\eps$, from \eqref{(f)} we get
$$I_{\la,\eps}(u_{\la,\eps}+tR_0U_{\eps})<I_{\la,\eps}(u_{\la,\eps})+\frac1N  S_0^{\frac{N}{p}}.$$
If $t_{\eps} \not \rightarrow 0$, then there exists some $T\in(0,1)$ such that $T<t_\eps<1$ for every $\eps$ small. Similar to the previous case,
\begin{equation}\label{fin_p_3}
    \begin{aligned}
        I_{\la,\eps}(u_{\la,\eps}+tR_0U_{\eps})&\leq I_{\la,\eps}(u_{\la,\eps})+\frac1N S_0^{\frac{N}{p}}-(TR_0)^{\pst-1}C\eps^{\alpha \frac{N-p}p}\\&\quad-\la\frac{(tR_0)^{q}}{q}\int_{B_r(y)}V_\eps^{q}\dx\\
        &\quad +CR_0^2O\left(\eps^{\frac{2\alpha(N-p)}{p(p-1)}}\right)+CR_0^{p-1}O\left(\eps^{\alpha\frac{N-p}{p}}\right)+o\left(\eps^{\alpha\frac{N-p}{p}}\right).
    \end{aligned}
\end{equation}
Now, we estimate the $q$ term. Using \eqref{lb-2}  in \eqref{fin_p_3}, we obtain \eqref{energy estimate-2} provided
\begin{align}\label{range-1}
    N-\frac{N-p}{p}q<\frac{2(N-p)}{p(p-1)}\Longleftrightarrow q>p^*-\frac{2}{p-1}.
\end{align}
This completes the proof.
\end{proof}
\begin{remark}
    Observe that for $2\leq p<3$, we have completely eliminated the term $L_3$ in the proof of Proposition \ref{energy_prop}. As a consequence, in this range, $\eps_\la$ becomes independent of $\la$, and \eqref{energy estimate}  and \eqref{energy estimate-2} hold for every $\eps \in (0, \eps_0)$.
\end{remark}
Next, we prove the $\text{(PS)}_c$ condition for the energy functional $I_{\la,\eps}$.
\begin{lemma}\label{PS cond}
    $I_{\la,\eps}$ satisfies $\text{(PS)}_c$ condition for every
    \begin{equation}
        c<I_{\la,\eps}(u_{\la,\eps})+\frac1NS_0^{\frac{N}{p}}.
    \end{equation}
\end{lemma}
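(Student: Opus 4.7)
\smallskip
\noindent \textbf{Proof plan for Lemma \ref{PS cond}.} Let $\{u_n\}\subset X_0$ be a PS sequence of $I_{\la,\eps}$ at the level $c$; that is, $I_{\la,\eps}(u_n)\to c$ and $I'_{\la,\eps}(u_n)\to 0$ in $X_0^\ast$. The plan is to follow the by-now classical splitting argument, the only new ingredient being a comparison of energies involving $u_{\la,\eps}$ in the very last step.

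\smallskip
\noindent \emph{Step 1 (Boundedness).} Using $\pst>p>q$, I would compute
\begin{equation*}
c + o(1) + o(1)\rho_\eps(u_n) = I_{\la,\eps}(u_n) - \tfrac{1}{\pst}\langle I'_{\la,\eps}(u_n),u_n\rangle = \Bigl(\tfrac{1}{p}-\tfrac{1}{\pst}\Bigr)\rho_\eps(u_n)^p - \la\Bigl(\tfrac{1}{q}-\tfrac{1}{\pst}\Bigr)\|u_n^+\|_q^q,
\end{equation*}
and control $\|u_n^+\|_q^q$ by $\rho_\eps(u_n)^q$ using the embedding $X_0\hookrightarrow L^q(\Om)$. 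Since $q<p$, Young's inequality gives $\rho_\eps(u_n)\le C$.

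\smallskip
\noindent \emph{Step 2 (Weak limit is a critical point).} Passing to a subsequence, $u_n\rightharpoonup u$ in $X_0$, $u_n\to u$ in $L^t(\Om)$ for $1\le t<\pst$ and a.e., and $\nabla u_n\to \nabla u$ a.e. (cf.\ \cite[Lemma~2.2]{DFB}). Lemma~\ref{converg_limi} then lets me pass to the limit in $\langle I'_{\la,\eps}(u_n),\varphi\rangle = o(1)$ to conclude that $u$ is a weak solution of \eqref{main_PDE}; testing with $u^-$ and using the standard local/nonlocal sign estimates shows $u\ge 0$ a.e.\ in $\Om$.

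\smallskip
\noindent \emph{Step 3 (Splitting).} Set $v_n:=u_n-u$. Brezis--Lieb gives $\|\nabla v_n\|_p^p = \|\nabla u_n\|_p^p-\|\nabla u\|_p^p+o(1)$ and $\|v_n^+\|_{\pst}^{\pst}=\|u_n^+\|_{\pst}^{\pst}-\|u^+\|_{\pst}^{\pst}+o(1)$; for the Gagliardo seminorm I would invoke the fractional analogue (as in \cite[Eq.~(2.17)]{DFB}) to obtain $[v_n]_{s,p}^p = [u_n]_{s,p}^p - [u]_{s,p}^p+o(1)$. Combining these with $\|u_n\|_q^q\to\|u\|_q^q$, subtracting the identities $\langle I'_{\la,\eps}(u_n),u_n\rangle=o(1)$ and $\langle I'_{\la,\eps}(u),u\rangle=0$ yields
\begin{equation*}
\rho_\eps(v_n)^p - \|v_n^+\|_{\pst}^{\pst} = o(1),\qquad I_{\la,\eps}(u_n) = I_{\la,\eps}(u)+\tfrac{1}{N}\rho_\eps(v_n)^p+o(1).
\end{equation*}
Both $\rho_\eps(v_n)^p$ and $\|v_n^+\|_{\pst}^{\pst}$ therefore converge to a common limit $L\ge 0$.

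\smallskip
\noindent \emph{Step 4 (Concentration dichotomy and conclusion).} The classical Sobolev inequality $S_0\|v_n\|_{\pst}^p\le \|\nabla v_n\|_p^p\le\rho_\eps(v_n)^p$ gives $L\ge S_0 L^{p/\pst}$, so either $L=0$ or $L\ge S_0^{N/p}$. If $L=0$, then $\rho_\eps(v_n)\to 0$, hence $u_n\to u$ in $X_0$, and we are done. Assume instead $L\ge S_0^{N/p}$; then
\begin{equation*}
c = I_{\la,\eps}(u)+\tfrac{1}{N}L \ge I_{\la,\eps}(u)+\tfrac{1}{N}S_0^{N/p}.
\end{equation*}
Combined with the hypothesis $c<I_{\la,\eps}(u_{\la,\eps})+\tfrac{1}{N}S_0^{N/p}$, this forces $I_{\la,\eps}(u)<I_{\la,\eps}(u_{\la,\eps})$, which I want to exclude. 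When $u\equiv 0$ this is immediate, since $I_{\la,\eps}(u_{\la,\eps})=c_{\la,\eps}<0=I_{\la,\eps}(0)$. The remaining case $u\not\equiv 0$ is the main obstacle: by the strong maximum principle $u>0$ in $\Om$, so $u$ is a positive solution, and I would rule out $I_{\la,\eps}(u)<I_{\la,\eps}(u_{\la,\eps})$ by exploiting that $u_{\la,\eps}$ is the infimum of $I_{\la,\eps}$ over $B_{r_0}$ together with the Pucci--Serrin/fibering-type structure of $t\mapsto I_{\la,\eps}(tu)$ (so that any positive solution either has energy at least $c_{\la,\eps}$, or sits in the interior of $B_{r_0}$ where it must coincide with the minimizer). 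This comparison is the only nontrivial new piece; every other step is a direct adaptation of the standard splitting scheme for local--nonlocal critical problems.
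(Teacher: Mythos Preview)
Your plan is essentially the paper's own argument: boundedness, Brezis--Lieb splitting, the Sobolev dichotomy $L=0$ or $L\ge S_0^{N/p}$, and then the comparison $I_{\la,\eps}(u)\ge c_{\la,\eps}=I_{\la,\eps}(u_{\la,\eps})$ via the fibering map $t\mapsto I_{\la,\eps}(tu/\rho_\eps(u))$. The only point to sharpen is your final dichotomy: the paper does not argue that $u$ ``must coincide with the minimizer''; rather, when $I_{\la,\eps}(u)<0$ one shows the fibering map has exactly two positive critical points $t_1<t_2$ (local min and global max), that $\rho_\eps(u)$ must equal $t_1$, and that $t_1<r_0$ because $g$ is positive near $r_0$, whence $u\in B_{r_0}$ and $I_{\la,\eps}(u)\ge c_{\la,\eps}$ directly.
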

\begin{proof}
    Let $\{u_n\}\subset X_0$ be a PS sequence of $I_{\la,\eps}$ at the level $c$. Then
    $I_{\la,\eps}(u_n)\to c$ and $\|I_{\la,\eps}'(u_n)\|_{X_0^\ast}\to0 \text{ as }n\to\infty.$ Observe that
    \begin{align*}
        c+C_1\rho_\eps(u_n)+o_n(1)&=I_{\la,\eps}(u_n)-\frac{1}{\pst}I_{\la,\eps}'(u_n)(u_n) \\&\geq \left(\frac1p-\frac1{\pst}\right)\rho_\eps(u_n)^p-\la C_q\rho_\eps(u_n)^q.
    \end{align*}
    Hence $\{u_n\}$ is bounded in $X_0$. By the ref{}lexivity of $X_0$, up to a subsequence, $u_n\rightharpoonup u_0$ in $X_0$. By Brezis-Lieb lemma,
    \begin{equation}\label{aba}
        \begin{aligned}
        &\|\nabla u_n\|_p^p-\|\nabla (u_n-u_0)\|_p^p=\|\nabla u_0\|_p^p+o_n(1),\\
        &[u_n]_{s,p}^p-[u_n-u_0]_{s,p}^p=[u_0]_{s,p}^p+o_n(1),\\
        &\|(u_n)_+\|_{\pst}^{\pst}-\|(u_n-u_0)_+\|_{\pst}^{\pst}=\|(u_0)_+\|_{\pst}^{\pst}+o_n(1).
    \end{aligned}
    \end{equation}
    Suppose $\{ u_n\}$ does not converge to $u_0$ i.e. $\rho_{\eps}(u_n-u_0)\geq C$ for all $n$.
    By \eqref{aba} and Lemma \ref{converg_limi},
    \begin{equation}\label{aba1}
        \rho_\eps(u_n-u_0)^p-\|(u_n-u_0)_+\|_{\pst}^{\pst}=I_{\la,\eps}'(u_n)(u_n-u_0)+o_n(1)=o_n(1).
    \end{equation}
    By the Sobolev inequality $S_0\norm{u^+}^p_{p^*} \le \rho_{\eps}(u)^p$ and \eqref{aba1}, we have
    \begin{align*}
        \|(u_n-u_0)_+\|_{\pst}^{\pst-p}=\frac{\|(u_n-u_0)_+\|_{\pst}^{\pst}}{\|(u_n-u_0)_+\|_{\pst}^p}&=\frac{\rho_\eps(u_n-u_0)^p+o_n(1)}{\|(u_n-u_0)_+\|_{\pst}^p}\\&\geq S_0+\frac{o_n(1)}{\rho_\eps(u_n-u_0)^p}.
    \end{align*}
    Since $\rho_{\eps}(u_n-u_0)\geq C$, $$\|(u_n-u_0)_+\|_{\pst}^{\pst-p}\geq S_0+o_n(1) \Longrightarrow \|(u_n-u_0)_+\|_{\pst}^{\pst}\geq S_0^{\frac{N}{p}}+o_n(1). $$
    Therefore, by \eqref{aba1}, for large $n$,
    \begin{align}
        \frac1NS_0^{\frac{N}{p}}&\leq \frac1N\|(u_n-u_0)_+\|_{\pst}^{\pst}+o_n(1)
        \no\\&=\frac1p\rho_\eps(u_n-u_0)^p-\frac1{\pst}\|(u_n-u_0)_+\|_{\pst}^{\pst}+o_n(1)\nonumber\\
        &=I_{\la,\eps}(u_n)-I_{\la,\eps}(u_0)+o_n(1)\nonumber\\
        &< c_{\la,\eps}+\frac1NS_0^{\frac{N}{p}}-I_{\la,\eps}(u_0).\label{parama1}
    \end{align}
    Next, we claim that $I_{\la, \eps}(u_0)\geq c_{\la,\eps}$. First, assuming the claim, we complete the proof. By \eqref{parama1}, $$\frac1NS_0^{\frac{N}{p}}< c_{\la,\eps}+\frac1NS_0^{\frac{N}{p}}-I_{\la,\eps}(u_0)\leq \frac1NS_0^{\frac{N}{p}},$$
    a contradiction. Hence, $u_n\to u_0$ in $X_0$ and $I_{\la,\eps}$ satisfies PS condition at the level $c$.

 Now we are left to prove the claim. Note that, if $I_{\la, \eps}(u_0)\geq0$, then the claim follows as  $c_{\la,\eps}<0$. Therefore, now we assume  $I_{\la, \eps}(u_0)<0$. Recall by Proposition \ref{B_r lemma}, $$I_{\la,\eps}\big|_{\pa B_{r_0}}>\delta_0>0.$$ For $t\geq0$, we define
    \begin{equation}\label{para-1}
        g(t)=I_{\la,\eps}\left(t\frac{u_0}{\rho_\eps(u_0)}\right)=\frac1pt^p-\frac1{\pst}t^{\pst}\frac{\|(u_0)_+\|_{\pst}^{\pst}}{\rho_{\eps}(u_0)^{\pst}}-\frac{\la}{q}t^{q}\frac{\|(u_0)_+\|_{q}^{q}}{\rho_{\eps}(u_0)^{q}}.
    \end{equation}
    Thus, $g(r_0)>\delta_0>0$. Thus, by continuity of $g$, $g(t)>0$ when $t$ lies in a neighbourhood of $r_0$. By \eqref{para-1},
    \begin{equation}
    \begin{aligned}
        g'(t)&=t^{p-1}-t^{\pst-1}\frac{\|(u_0)_+\|_{\pst}^{\pst}}{\rho_{\eps}(u_0)^{\pst}}-{\la}t^{q-1}\frac{\|(u_0)_+\|_{q}^{q}}{\rho_{\eps}(u_0)^{q}}\\
        &=t^{q-1}\left[t^{p-q}-t^{\pst-q}\frac{\|(u_0)_+\|_{\pst}^{\pst}}{\rho_{\eps}(u_0)^{\pst}}-{\la}\frac{\|(u_0)_+\|_{q}^{q}}{\rho_{\eps}(u_0)^{q}}\right].
    \end{aligned}
    \end{equation}
Since $I_{\la, \eps}(u_0)<0$, $u_0$ is nonzero. Further since $u_0$ is a critical point of $I_{\la,\eps}$,
\begin{align*}
g'(\rho_\eps(u_0))=\left< I_{\la,\eps}'(u_0),\frac{u_0}{\rho_\eps(u_0)}\right>=0.
\end{align*}
Thus $g$ has a positive critical point. But observe that $g$ is strictly decreasing near zero and $g(t)\to-\infty$ as $t\to\infty$. This implies that $g$ has at least two positive critical points. Suppose for contradiction, $g$ has more than two positive critical points then $h$ has three distinct zeros, where $h$ is defined as $$h(t):=t^{p-q}-t^{\pst-q}\frac{\|(u_0)_+\|_{\pst}^{\pst}}{\rho_{\eps}(u_0)^{\pst}}-{\la}\frac{\|(u_0)_+\|_{q}^{q}}{\rho_{\eps}(u_0)^{q}}.$$
But observe that $h$ has only one positive critical point, which is a contradiction by Rolle's theorem. Hence, $g$ has exactly two positive critical points $t_1<t_2$, where $t_1$ is the local minimum and $t_2$ is the global maximum. Since $g$ is positive for some $t$, $g(t_2)>0$. Therefore, $\rho_\eps(u_0)$ can not be $t_2$. Thus $\rho_\eps(u_0)=t_1$. Now suppose $\rho_\eps(u_0)\geq r_0$. Since $g$ is negative near $t=0$ and positive near $r_0$, $g$ has a critical point in $(0,r_0)$, which contradicts the fact that $g$ has exactly two critical points. Hence, $\rho_\eps(u_0)<r_0$. By Proposition \ref{B_r lemma}, we infer that $I_{\la,\eps}(u_0)\geq c_{\la,\eps}.$ This completes the proof of the lemma.
\end{proof}
Finally, we prove the existence of a second positive solution of \eqref{main_PDE} using the Mountain pass theorem.
\begin{proof}[{\bf Proof of Theorem \ref{second_sol}}]
Let $\la^\#$ and $\eps_\la$ be as in Proposition \ref{B_r lemma} and Proposition \ref{energy_prop} respectively. From Proposition \ref{B_r lemma}, we recall that $u_{\la,\eps}$ lies in the interior of $B_{r_0}$ and $\la^\#<\la_{**}$. Therefore, for any $0<\la<\la^\#$, $$I_{\la,\eps}\big|_{\pa B_{r_0}}\geq \frac1pr_0^p-C_1r_0^{\pst}-\la C_2r_0^q\geq 2\delta_0-\la C_2r_0^q>\delta_0>0.$$ By Proposition \ref{energy_prop}, There exists $R_0>0$ (enlarging $R_0$ if necessary) such that
\begin{align*}
&\rho_{\eps}(u_{\la,\eps}+R_0U_\eps)>r_0, \; I_{\la,\eps}(u_{\la,\eps}+R_0U_\eps)<I_{\la,\eps}(u_{\la,\eps})=c_{\la,\eps}<0, \text{ and }\\
    &I_{\la,\eps}(u_{\la,\eps}+tR_0U_\eps)<I_{\la,\eps}(u_{\la,\eps})+\frac1N S_0^{\frac{N}{p}},\quad\forall\,t\in[0,1].
\end{align*}
Define
\begin{align*}
&c:=\inf_{\gamma\in\Gamma}\max_{t\in[0,1]}I_{\la,\eps}(\gamma(t)), \\
&\text{ where } \Gamma:=\left\{\gamma\in C([0,1],X_0):\gamma(0)=u_{\la,\eps},\gamma(1)=u_{\la,\eps}+R_0U_\eps\right\}.
\end{align*}
Observe that the path $\gamma_0(t):=u_{\la,\eps}+tR_0U_\eps,\, t\in[0,1]$ lies in $\Gamma$. Thus, $$c\leq\max_{t\in[0,1]}I_{\la,\eps}(\gamma_0(t))<I_{\la,\eps}(u_{\la,\eps})+\frac1N S_0^{\frac{N}p}.$$
Therefore, by Lemma \ref{PS cond} and the Mountain pass theorem, there exists a critical point $v_{\la,\eps}$ of $I_{\la,\eps}$ such that $I_{\la,\eps}(v_{\la,\eps})=c$. Moreover, since $c\geq\delta_0>0>c_{\la,\eps}$, we get $v_{\la,\eps} \neq u_{\la,\eps}$.
\end{proof}
\begin{remark}\label{lambda-ind}
    If $u_{\la,\eps}\neq z_{\la,\eps}$ is the minimal solution of \eqref{main_PDE}, then we get two positive solutions of \eqref{main_PDE} with $z_{\la,\eps}\leq u_{\la,\eps}$ a.e. in $\Omega$. Suppose $u_{\la,\eps}=z_{\la,\eps}$ then, we again have two positive solutions of \eqref{main_PDE} and $u_{\la,\eps}\leq v_{\la,\eps}$.
\end{remark}

\appendix
\section{Existence of infinitely many nontrivial solutions}\label{appndx}
As discussed in the introduction, da Silva et al. \cite{DFB} have shown the existence of infinitely many nontrivial solutions of ($\tb{\mathcal{P}_{\la, 1}}$) using Krasnoselskii genus theory. Here, we show the existence of a sequence of nontrivial solutions of \eqref{main_PDE} with negative energy whose energy converges to zero using the Dual Fountain Theorem. Consider the following energy functional associated with \eqref{main_PDE}:
\begin{align*}
    \Tilde{I}_{\la,\eps}(u) := \frac1p\rho_\eps(u)^p-\frac{\lambda}{q}\|u\|_q^q-\frac{1}{\pst}\|u\|_{\pst}^{\pst}, \; \forall \, u \in X_0.
\end{align*}
Observe that $\Tilde{I}_{\la,\eps} \in C^1(X_0, \R)$ and every critical point of $\Tilde{I}_{\la,\eps}$ corresponds to a weak solution of \eqref{main_PDE}. Since $X_0$ and $X_0^\ast$ are separable Banach spaces, we select a shrinking and Markushevich basis for $X_0$ (see \cite[Theorem 1.22]{HMVZ}), i.e. there exists $\{\phi_n\}_{n\in\NN}$ in $X_0$ and $\{\phi_n^\ast\}_{n\in\NN}$ in $X_0^\ast$ such that $X_0=\ov{\{\phi_n:n\in\NN\}}$, $X_0^\ast=\ov{\{\phi_n^\ast:n\in\NN\}}$ and $\langle\phi_n^\ast,\phi_m\rangle=\delta_{n,m}$, $n,m\in \NN$, where $\delta_{n,m}$ is the Kronecker Delta function. We define
$$X_j:=\R \phi_j,\quad Y_k:=\bigoplus_{j=1}^kX_j,\quad Z_k:= \overline{\bigoplus_{j=k}^{\infty}X_j},$$
 and consider the antipodal action of $G=\ZZ/2$ on $X_0=\ov{\bigoplus_{j\geq1}X_j}$. Note that $G$ acts isometrically on $X_0$, the spaces $X_j$ are invariant, $X_j\simeq \R,\,j\in\NN$ and the antipodal action of $G$ on $\R$ is admissible (see \cite[Example 3.3, Theorem D.17]{W}). Now, we state the following theorem (see \cite[Theorem 3.18]{W}).

\begin{theorem}[Dual Fountain Theorem]\label{dual_fount_theorem}
    Let $\phi\in C^1(X_0,\R)$ be an invariant functional, i.e. $\phi\circ g=\phi$ for all $g\in G$. If for every $k\geq k_0$, there exists $0<r_k<\rho_k$ such that
    \begin{enumerate}
        \item[\rm{(a)}] $a_k:=\inf_{u\in Z_k,\rho_\eps(u)=\rho_k}\phi(u)\geq0$,
        \item[\rm{(b)}] $b_k:=\max_{u\in Y_k,\rho_\eps(u)=r_k}\phi(u)<0$,
        \item[\rm{(c)}] $d_k:=\inf_{u\in Z_k,\rho_\eps(u)\leq\rho_k}\phi(u)\rightarrow 0$ as $k\to\infty$,
        \item[\rm{(d)}] Every sequence $u_{r_j}\in X_0$ satisfying $$u_{r_j}\in Y_{r_j}, \; \phi(u_{r_j})\to c\in[d_k,0), \; \left(\phi\mid_{Y_{r_j}}\right)'(u_{r_j})\to 0, \; \text{ as }r_j\to\infty,$$
        has a subsequence converging to a critical point of $\phi$.
    \end{enumerate}
    Then, $\phi$ has a sequence of negative critical values converging to $0$.
\end{theorem}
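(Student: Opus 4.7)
The plan is to follow Willem's standard variational strategy for dual minimax theorems, combining Krasnoselskii genus theory on the finite-dimensional subspaces $Y_k$, an equivariant deformation argument, and a Galerkin-type passage to the limit controlled by hypothesis (d). The main conceptual point is that (d) is a Galerkin-style Palais--Smale condition that lets us produce critical points of $\phi$ by first finding them for the restrictions $\phi|_{Y_m}$ and then passing to the limit.

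First, for each fixed $k \geq k_0$ and $m \geq k$, I would define the minimax class
$$
\Sigma_k^{(m)} = \bigl\{A \subset Y_m \cap \overline{B}_{\rho_k}: A \text{ closed},\ A = -A,\ \gamma(A) \geq m-k+1\bigr\},
$$
where $\gamma$ denotes the Krasnoselskii genus (well-defined because of the antipodal $G$-action), and then set
$$
c_k^{(m)} = \inf_{A \in \Sigma_k^{(m)}} \sup_{u \in A}\phi(u).
$$
Hypothesis (a) together with a standard intersection argument (every $A \in \Sigma_k^{(m)}$ must meet the sphere $\{u\in Z_k : \rho_\eps(u)=\rho_k\}$ by genus monotonicity) gives $c_k^{(m)} \geq d_k$. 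Hypothesis (b) applied with the unit sphere of $Y_k \subset Y_m$ (which has genus $k$, and hence sits inside some admissible $A$) yields $c_k^{(m)} \leq b_k < 0$. Thus $d_k \leq c_k^{(m)} \leq b_k < 0$ for every $m \geq k$.

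Second, since $Y_m$ is finite-dimensional, compactness is automatic, and the standard equivariant deformation lemma applied to the even functional $\phi|_{Y_m}$ shows that $c_k^{(m)}$ is a critical value of $\phi|_{Y_m}$; let $u_{k,m} \in Y_m$ be a corresponding critical point. As $m\to\infty$ with $k$ fixed, the sequence $\{u_{k,m}\}$ satisfies exactly the assumptions of (d): $u_{k,m} \in Y_m$, $(\phi|_{Y_m})'(u_{k,m}) = 0$, and $\phi(u_{k,m}) \in [d_k, b_k] \subset [d_k, 0)$. Passing to a subsequence, (d) yields convergence to a critical point $u_k \in X_0$ of the full functional $\phi$ with $\phi(u_k) \in [d_k, 0)$. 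Since by (c) we have $d_k \to 0^-$, the values $\phi(u_k)$ form a sequence of negative critical values tending to $0$; only finitely many of them can coincide (otherwise a limit argument would contradict $d_k \to 0$), so we obtain infinitely many distinct critical points.

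The main obstacle will be the careful bookkeeping of the two indices $k$ and $m$ in the minimax class $\Sigma_k^{(m)}$: the genus threshold $m-k+1$ must be chosen so that (i) the intersection property with $Z_k$-spheres forces $c_k^{(m)} \geq d_k$, (ii) a symmetric set of genus $k$ inside $Y_k$ is admissible so that (b) gives $c_k^{(m)} < 0$, and (iii) the value $c_k^{(m)}$ remains controlled uniformly in $m$ so that the Galerkin passage via (d) lands in $[d_k, 0)$. Once this is set up, the finite-dimensional equivariant deformation is routine, and the real technical content is verifying that (d) does apply to the approximate critical points $\{u_{k,m}\}$ produced at each level.
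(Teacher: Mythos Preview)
The paper does not actually prove this theorem: it is stated as Theorem~\ref{dual_fount_theorem} with the attribution ``see \cite[Theorem 3.18]{W}'' and used as a black box in the proof of Proposition~\ref{inf_non_triv}. So there is no ``paper's own proof'' to compare against; the relevant benchmark is Willem's proof.

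Your sketch follows the right global architecture --- finite-dimensional minimax on $Y_m$, equivariant deformation to produce critical points of $\phi|_{Y_m}$, then a Galerkin passage via hypothesis (d) --- and this is exactly Willem's strategy. One difference: Willem organizes the minimax via classes of odd maps $\gamma:B_k\to Y_m$ with $\gamma|_{\partial B_k}=\mathrm{id}$ (so that a Borsuk--Ulam/intersection lemma forces $\gamma(B_k)$ to meet $\{u\in Z_k:\rho_\eps(u)=\rho_k\}$), whereas you use symmetric subsets with a genus lower bound. Both routes work, but your genus threshold $m-k+1$ is not the right number for the intersection property you claim: in $Y_m$ the subspace $Y_m\cap Z_k$ has codimension $k-1$, so to guarantee $A\cap Z_k\neq\emptyset$ you need $\gamma(A)\geq k$, not $\gamma(A)\geq m-k+1$; conversely, for the upper bound via (b) you want an admissible set contained in $Y_k$, which has genus at most $k$. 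So the correct threshold is $\gamma(A)\geq k$, and then the sphere $\{u\in Y_k:\rho_\eps(u)=r_k\}$ (genus exactly $k$) gives the upper bound $c_k^{(m)}\leq b_k$. You flagged this bookkeeping as the main obstacle, and indeed once the threshold is fixed the rest of your outline goes through.
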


\begin{lemma}\label{lemma_betak}
    For $q \in (1,p)$, define $$\beta_k:=\sup_{u\in Z_k, \rho_\eps(u)=1}\|u\|_{q}.$$ Then $\beta_k \ra 0$ as $k \ra \infty$.
\end{lemma}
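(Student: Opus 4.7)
The plan is the standard one for shrinking bases, combined with the compact embedding $X_0\hookrightarrow L^q(\Omega)$ which holds since $q<p<p^*$.

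First I observe that $\{\beta_k\}$ is monotone non-increasing because $Z_{k+1}\subset Z_k$, and clearly $\beta_k\geq 0$. Hence $\beta_k\downarrow\beta$ for some $\beta\geq 0$, and it remains to show $\beta=0$. For each $k$, by definition of supremum, pick $u_k\in Z_k$ with $\rho_\eps(u_k)=1$ and $\|u_k\|_q>\beta_k-\frac{1}{k}$. In particular $\|u_k\|_q\to\beta$ as $k\to\infty$.

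Since $\rho_\eps(u_k)=1$, the sequence $\{u_k\}$ is bounded in the reflexive space $X_0$, so along a subsequence $u_k\rightharpoonup u$ in $X_0$ for some $u\in X_0$. The key step is to show that $u=0$. For each fixed $j\in\mathbb{N}$, once $k>j$ we have $u_k\in Z_k\subset \overline{\bigoplus_{i\geq j+1}X_i}$, which by the biorthogonality relation $\langle\phi_i^{\ast},\phi_m\rangle=\delta_{i,m}$ gives $\langle\phi_j^{\ast},u_k\rangle=0$. Passing to the weak limit (using $\phi_j^{\ast}\in X_0^{\ast}$) yields $\langle\phi_j^{\ast},u\rangle=0$ for every $j\in\mathbb{N}$. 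Because the basis $\{\phi_n\}$ is shrinking (or merely Markushevich and total), the family $\{\phi_j^{\ast}\}_{j\in\mathbb{N}}$ is total in $X_0^\ast$, i.e. $X_0^\ast=\overline{\mathrm{span}\{\phi_j^{\ast}\}}$, and hence separates points of $X_0$. Therefore $u=0$.

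Finally, since $q<p^\ast$, the embedding $X_0\hookrightarrow L^q(\Omega)$ is compact, so along the same subsequence $u_k\to 0$ strongly in $L^q(\Omega)$, giving $\|u_k\|_q\to 0$. Combined with $\|u_k\|_q\to\beta$, this forces $\beta=0$. Since the limit is independent of the subsequence, the full sequence $\beta_k\to 0$, which is the claim. The only delicate point is the use of totality of $\{\phi_j^{\ast}\}$; this is exactly the property guaranteed by the choice of a shrinking Markushevich basis made in the paragraph preceding the statement.
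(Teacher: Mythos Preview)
Your proof is correct and follows essentially the same approach as the paper: monotonicity of $\{\beta_k\}$, selection of near-maximizers $u_k\in Z_k$ on the unit sphere, weak convergence to zero via the biorthogonality of the Markushevich basis, and the compact embedding $X_0\hookrightarrow L^q(\Omega)$ to upgrade to strong convergence. The only cosmetic difference is that the paper shows directly that the full sequence $u_k\rightharpoonup 0$ by approximating an arbitrary $T\in X_0^\ast$ with finite combinations of the $\phi_n^\ast$, whereas you first extract a weakly convergent subsequence and then identify its limit as zero; both routes are standard and yield the same conclusion.
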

\begin{proof}
    Observe that $\{ \beta_k \}$ is a decreasing sequence and hence $\beta_k\to\beta$ for some $\beta  \ge 0$. By the definition of supremum, for every $k\geq0$, we choose $u_k\in Z_k$ such that $\rho_\eps(u_k)=1$ and $\|u_k\|_q>\frac{\beta_k}{2}$. Let $T\in X_0^\ast$. Since $\{\phi_n^\ast\}$ forms a basis of $X_0^\ast$, for some sequence $\{a_n\}\subset\R$, we write $T=\sum_{n\geq1}a_n\phi_n^\ast$. First, suppose $T$ is a finite sum i.e. $T=\sum_{n=1}^{n_1}a_n\phi_n^\ast$. Since $Z_k=\ov{\text{span}\{\phi_k,\phi_{k+1},\ldots\}}$ then for all $k\geq n_1+1$, $T(u_k)=\sum_{n=1}^{n_1}a_n\phi_n^\ast(u_k)=0$ as $\langle\phi_n^\ast,\phi_m\rangle=\delta_{n,m}$. Now if it is not a finite sum then for $\eps>0$ choose $n_0>0$ such that $\left|T(u_k)-\sum_{n=1}^{n_0}a_n\phi_n^\ast(u_k)\right|<\eps$. Now for large $k$, $$|T(u_k)|\leq\left|T(u_k)-\sum_{n=1}^{n_0}a_n\phi_n^\ast(u_k)\right|+\sum_{n=1}^{n_0}|a_n|\,|\phi_n^\ast(u_k)|<\eps.$$Thus $T(u_k)\to0$ as $k\to\infty$ for all $T\in X_0^\ast$. That means $u_k\rightharpoonup0$ in $X_0$. The Sobolev embedding theorem implies that $u_k\to0$ in $L^q(\Omega)$. Thus
    \begin{align*}
      \beta = \lim_{k \ra \infty} \beta_k \le \lim_{k \ra \infty} 2 \norm{u_k}_q = 0,
    \end{align*}
    as required.
\end{proof}

The following proposition states that \eqref{main_PDE} admits infinitely many nontrivial solutions with negative energy, and this sequence of energy converges to zero.
\begin{proposition}\label{inf_non_triv}
    Let $q\in(1,p)$ and $\eps\in(0,1]$. Then there exists $\lambda^\ast>0$ such that for all $\lambda\in(0,\lambda^\ast)$, \eqref{main_PDE} has a sequence of nontrivial solutions $\{u_n\}$ such that the following hold:
    \begin{align*}
        \Tilde{I}_{\la,\eps}(u_n) < 0, \; \forall \, n \in \mathbb{N} \, \text{  and  } \, \Tilde{I}_{\la,\eps}(u_n) \ra 0, \text{ as } n \ra \infty.
    \end{align*}
\end{proposition}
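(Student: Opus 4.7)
My plan is to verify the four hypotheses of the Dual Fountain Theorem (Theorem \ref{dual_fount_theorem}) for the functional $\Tilde{I}_{\la,\eps}$. The functional is $G$-invariant under the antipodal action since $\Tilde{I}_{\la,\eps}(-u) = \Tilde{I}_{\la,\eps}(u)$. I will take $\la^\ast$ as defined in \eqref{e5.01}, so that by \cite[Lemma 2.4(ii)]{DFB}, the standard $(\text{PS})_c$ condition for $\Tilde{I}_{\la,\eps}$ holds at every $c < 0$, which lies below the positive threshold appearing in \eqref{PSc}.

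For the setup, set $c_k := \inf_{u \in Y_k,\, \rho_\eps(u) = 1}\|u\|_q > 0$ (by the finite-dimensional equivalence of norms in $Y_k$), and let $\beta_k$ be as in Lemma \ref{lemma_betak}. Choose
$$\rho_k := \left(\tfrac{2p\la \beta_k^q}{q}\right)^{1/(p-q)}, \qquad r_k \in (0, \rho_k) \text{ with } r_k^{p-q} < \tfrac{p\la c_k^q}{q}.$$
Since $\beta_k \to 0$, we have $\rho_k \to 0$. For condition (a), for $u \in Z_k$ with $\rho_\eps(u) = \rho_k$, the definition of $\beta_k$ and the Sobolev embedding $X_0 \hookrightarrow L^{\pst}$ (with constant $C$) yield
$$\Tilde{I}_{\la,\eps}(u) \ge \tfrac{\rho_k^p}{p} - \tfrac{\la \beta_k^q \rho_k^q}{q} - \tfrac{C \rho_k^{\pst}}{\pst} = \tfrac{\la \beta_k^q \rho_k^q}{q} - \tfrac{C \rho_k^{\pst}}{\pst},$$
and since $\rho_k \to 0$, the first term dominates for large $k$, giving $a_k \geq 0$. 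For (b), the choice of $r_k$ gives $\Tilde{I}_{\la,\eps}(u) \leq \tfrac{r_k^p}{p} - \tfrac{\la c_k^q r_k^q}{q} < 0$ for $u \in Y_k$ with $\rho_\eps(u) = r_k$. For (c), $0 \in Z_k$ gives $d_k \leq 0$, while the same chain of estimates as in (a) gives $d_k \geq -\tfrac{\la \beta_k^q \rho_k^q}{q} - \tfrac{C \rho_k^{\pst}}{\pst} \to 0$.

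The technical core is verifying condition (d), where the main obstacle lies. Given a sequence $\{u_{r_j}\}$ with $u_{r_j} \in Y_{r_j}$, $\Tilde{I}_{\la,\eps}(u_{r_j}) \to c \in [d_k, 0)$, and $(\Tilde{I}_{\la, \eps}|_{Y_{r_j}})'(u_{r_j}) \to 0$, the identity
$$\Tilde{I}_{\la,\eps}(u_{r_j}) - \tfrac{1}{\pst} \Tilde{I}_{\la,\eps}'(u_{r_j})(u_{r_j}) = \left(\tfrac{1}{p} - \tfrac{1}{\pst}\right)\rho_\eps(u_{r_j})^p - \la \left(\tfrac{1}{q} - \tfrac{1}{\pst}\right)\|u_{r_j}\|_q^q,$$
combined with $p > q$ and the Sobolev embedding, provides $X_0$-boundedness of $\{u_{r_j}\}$. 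By reflexivity, $u_{r_j} \rightharpoonup u$ along a subsequence, and testing against elements of the dense set $\cup_n Y_n$ together with Lemma \ref{converg_limi} shows $u$ is a critical point of $\Tilde{I}_{\la,\eps}$. The delicate step is upgrading this to strong convergence at the negative energy level $c$: here I will adapt the Brezis-Lieb argument from the proof of Lemma \ref{PS cond}; if $u_{r_j} \not \to u$ in $X_0$, then $\|u_{r_j} - u\|_{\pst}^{\pst} \geq S_0^{N/p} + o(1)$, which would force $c \geq \Tilde{I}_{\la,\eps}(u) + \tfrac{1}{N} S_0^{N/p}$ and contradicts $c < 0$ for $\la < \la^\ast$ (since the threshold in \eqref{PSc} is positive, this concentration scenario is ruled out at negative levels). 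Once all four conditions are established, Theorem \ref{dual_fount_theorem} yields a sequence of negative critical values $c_n \to 0$ of $\Tilde{I}_{\la,\eps}$, and each corresponds to a nontrivial weak solution $u_n$ of \eqref{main_PDE} with $\Tilde{I}_{\la,\eps}(u_n) < 0$ and $\Tilde{I}_{\la,\eps}(u_n) \to 0$.
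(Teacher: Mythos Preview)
Your proof is correct and takes essentially the same approach as the paper: verify conditions (a)--(d) of the Dual Fountain Theorem with the same choice $\rho_k=(2p\lambda\beta_k^q/q)^{1/(p-q)}$, the same use of finite-dimensionality of $Y_k$ for (b), and the Palais--Smale condition below the positive threshold \eqref{PSc} for (d). The only difference is that you sketch the $(\text{PS})_c^*$ argument for (d) explicitly (boundedness, weak limit is critical, Brezis--Lieb concentration alternative), whereas the paper simply invokes the $(\text{PS})_c$ result from \cite[Lemma~2.4(ii)]{DFB} directly.
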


\begin{proof}
 Since $\Tilde{I}_{\la,\eps}$ is an even function, i.e., $\Tilde{I}_{\la,\eps}(u)=\Tilde{I}_{\la,\eps}(-u)$, it is enough to check the conditions (a)-(d) in Theorem \ref{dual_fount_theorem}.
  Let
  $$R:=\left(\frac{\pst\SS_0^{\frac{\pst}{p}}}{2p}\right)^{\frac{1}{\pst-p}}.$$
  Then for any $u\in Z_k$ with $\rho_\eps(u)\leq R$,
\begin{align}
    \Tilde{I}_{\la,\eps}(u)&=\frac1p\rho_\eps(u)^p-\frac{\lambda}{q}\|u\|_q^q-\frac{1}{\pst}\|u\|_{\pst}^{\pst}
    \no\\&\geq \frac1p\rho_\eps(u)^p-\frac{\lambda}{q}\beta_k^q\rho_\eps(u)^q-\frac{1}{\pst\SS_0^{\pst/p}}\rho_\eps(u)^{\pst}\nonumber\\
    &\geq \frac{1}{2p}\rho_\eps(u)^p-\frac{\lambda}{q}\beta_k^q\rho_\eps(u)^q.\label{e5}
\end{align}
We choose
$\rho_k:=\left(\frac{2p\lambda\beta_k^q}{q}\right)^{\frac{1}{p-q}}. $
Hence for large $k$, $u\in Z_k$ and $\rho_\eps(u)=\rho_k$, $\Tilde{I}_{\la,\eps}(u)\geq0$, i.e., (a) holds. Observe that $Y_k$ are finite-dimensional spaces. Hence, all norms on $Y_k$ are equivalent. Since $\lambda>0$, we  choose a small enough $r_k<\rho_k$ such that (b) holds. For $k$ large, $u\in Z_k$ and $\rho_\eps(u)\leq\rho_k$, it is easy to see that $$\frac{-\lambda}{q}\beta_k^q\rho_k^q\leq d_k\leq \frac1p\rho_k^p.$$ In view of Lemma \ref{lemma_betak} and using $q<p$, $\rho_k\to0$ as $k \ra \infty$. Thus, we get (c). Recall that by \cite[Lemma 2.4(ii)]{DFB}, $\Tilde{I}_{\la,\eps}$ satisfies (PS)$_c$ for any $c$ satisfying \eqref{PSc}. Therefore, for $q<p$ and for every $\lambda\in(0,\lambda^\ast)$, where $\la^\ast$ is defined in \eqref{e5.01}, it holds
$$ \frac1N S_0^{\frac{N}{p}}>|\Omega|\left(\frac1p-\frac1{\pst}\right)^{-\frac{q}{\pst-q}}\left(\lambda\left(\frac1q-\frac1p\right)\right)^{\frac{\pst}{\pst-q}}.$$
Hence, (d) holds. Therefore, we conclude the proof by applying Theorem \ref{dual_fount_theorem}.
\end{proof}
\section*{Acknowledgments}
The research of M.~Bhakta is partially supported by the DST Swarnajaynti Fellowship (SB/SJF/2021-22/09). The research of N.~Biswas is partially supported by the SERB National Postdoctoral Fellowship (PDF/2023/000038). The research of P.~Das is partially supported by the NBHM Fellowship (0203/5(38)/ 2024-R\&D-II/11224).









\bibliographystyle{abbrvnat}

\end{document}